\documentclass[11pt,a4paper]{article}
\usepackage{amsfonts}
\textwidth=15cm  \textheight=22cm
\usepackage{amsmath,amssymb}
\usepackage{mathrsfs}
\usepackage{hyperref}
\usepackage{graphicx}
\usepackage{float}
\usepackage{amsfonts}
\usepackage{bbm}
\usepackage{amsmath,amscd,amsbsy,amssymb,latexsym,url,bm,amsthm}
\usepackage{mathrsfs, euscript}
\usepackage{lineno}

\oddsidemargin=8mm \evensidemargin=8mm \topskip=0mm
\newtheorem{thm}{Theorem}[section]
\newtheorem{cor}[thm]{Corollary}
\newtheorem{lem}[thm]{Lemma}
\newtheorem{prop}[thm]{Proposition}
\newtheorem{defn}[thm]{Definition}

\newtheorem{rem}[thm]{Remark}

\makeatletter \renewenvironment{proof}[1][Proof]
{\par\pushQED{\qed}\normalfont\topsep6\p@\@plus6\p@\relax\trivlist\item[\hskip\labelsep\bfseries#1\@addpunct{.}]\ignorespaces}{\popQED\endtrivlist\@endpefalse} \makeatother
\makeatletter
\numberwithin{equation}{section}\allowdisplaybreaks

\def\leq{\leqslant}

\def\eps{\varepsilon}
\def\leq{\leqslant}
\def\geq{\geqslant}

\def\R{{\mathbb{R}}}

\def\FF{{\mathscr{F}^{-1}}}
\def\F{{\mathscr{F}}}
\def\Z{{\mathbb{Z}}}
\renewcommand{\S}{\mathcal{S}}

\newcommand{\ddt}{\frac{d\ }{dt}}


\begin{document}

\title{\large\bf  Global Well-posedness for the Fourth-order Nonlinear Schr\"odinger Equation}

\author{\normalsize \bf Mingjuan Chen$^{a}$,  \bf Yufeng Lu$^{b,}\footnote{Corresponding author.}$\ ,\   and \bf Yaqing Wang$^{a}$\\
\footnotesize
\it $^a$ Department of Mathematics, Jinan University, Guangzhou 510632, P.R. China, \\ \footnotesize
\it $^b$ School of Science, Jimei University, Xiamen 361021, P.R. China, \\ \footnotesize
\it E-mails: mjchen@jnu.edu.cn, lyf@jmu.edu.cn \\
} \maketitle

\thispagestyle{empty}
\begin{abstract}
The local and global well-posedness for the one dimensional fourth-order nonlinear Schr\"odinger equation are established in
the modulation space $M^{s}_{2,q}$ for $s\geq \frac12$ and $2\leq q <\infty$. The local result is based on the $U^p-V^p$ spaces and crucial bilinear estimates. The key ingredient to obtain the global well-posedness is that we achieve a-priori estimates of the solution in modulation  spaces by utilizing the power series expansion of the perturbation determinant introduced by Killip-Visan-Zhang for completely integrable PDEs. \\

{\bf Keywords:} Well-posedness; Fourth-order nonlinear Schr\"odinger equation; Bilinear estimates; Completely integrable structure.

\end{abstract}

\section{Introduction}

In this paper, we consider the Cauchy problem for the fourth order nonlinear Schr\"odinger equation (4NLS) on the line $\mathbb{R}$:
\begin{equation}\label{4NLS}
\left\{\begin{array}{l}
	iu_{t}+u_{xxxx}+8u_{xx}|u|^{2}+2\overline{u}_{xx}u^{2}+6u_{x}^{2}\overline{u}+4u|u_{x}|^{2}+6u|u|^{4}=0,  \\
	u(x,0) =u_{0}(x) ,
\end{array}\right.
\end{equation}
where $u$ is a complex valued function of $(x,t)\in \mathbb{R} \times  \mathbb{R}$, and the fixed coefficients in the
nonlinearities could ensure the complete integrable structure. 4NLS \eqref{4NLS}, as a member of the NLS integrable hierarchy, can be expressed as the compatibility conditions of linear eigenvalue equations. For a more general class of the 4NLS:
\begin{equation}\label{4NLS2}
	iu_{t}+u_{xx}+2u|u|^{2}+\gamma(u_{xxxx}+8u_{xx}|u|^{2}+2\overline{u}_{xx}u^{2}+6u_{x}^{2}\overline{u}+4u|u_{x}|^{2}+6u|u|^{4})=0,  \end{equation}
which can be used to describe the propagation of ultrashort optical pulses in long-distance high-speed optical fiber transmission systems \cite{A09,D01}, or the three dimensional motion of an isolated vortex filament embedded in an inviscid incompressible fluid \cite{Da1906}. In magnetic mechanics, it can describe the nonlinear spin excitations in one-dimensional isotropic biquadratic Heisenberg ferromagnetic spin with the octuple-dipole interaction \cite{D99}.

It is worth noting that 4NLS \eqref{4NLS} is endowed with two important features. One is the complete integrability: in the sense that it admits Lax pair formulations, thus one can show that the solution exists globally in time for any Schwartz initial data. The other one is the scaling symmetry: for any solution $u(x,t)$ to \eqref{4NLS} with initial data $u_{0}(x)$, then the $\lambda$-scaled function
$ u_{\lambda } (x,t)=\lambda u(\lambda x,\lambda^{4}t)$ is also a solution to \eqref{4NLS} with $\lambda$-scaled initial data $u_{0,\lambda } (x)=\lambda u_0(\lambda x)$. The Sobolev space $\dot{H}^{s_c}$ is called the scaling-critical space if it satisfies $\|u_{0,\lambda}\|_{\dot{H}^{s_c}}\sim\|u_0\|_{\dot{H}^{s_c}}.$ By a simple calculation, one can know that $s_c=-1/2$. We reasonably predict that \eqref{4NLS} is ill-posed in $H^s$ when $s<-1/2$.

The Cauchy problem for the 4NLS has been studied extensively by many researchers. Segata \cite{segata2003well} established the local-wellposedness of \eqref{4NLS2} excluding the nonlinearity  $u_{xx}|u|^{2}$ in the Sobolev space $H^s(\mathbb{R})$ for $s\geq 1/2$ under the condition $\gamma<0$. Then Huo and Jia \cite{huo2005cauchy} relaxed the restriction $\gamma<0$ while still not incorporating $u_{xx}|u|^{2}$. For the complete \eqref{4NLS2} which includes the nonlinearity  $u_{xx}|u|^{2}$, Segata \cite{segata2004remark} obtained the the local-wellposedness in $H^s(\mathbb{R})$ for $s> 7/12$ under the condition $\gamma<0$. Huo and Jia \cite{huo2007refined} eliminated the requirement $\gamma<0$ and improved the regularity to $s> 1/2$ by employing
a crucial maximal function estimate. In addition, it is known that \eqref{4NLS2} is ill-posed in $H^s$ for $s\in (-1/2,1/2)$ in the sense that the data-to-solution map fails to be uniformly continuous, see \cite{maeda2011existence,segata2010well}.  There are also many studies focusing on the 4NLS with different nonlinearities or in the probabilistic setting, see, for example, \cite{kwak2018periodic,oh2018global,segata2012refined} on the torus, \cite{hao2007well,pausader2009cubic,pausader2010mass,ruzhansky2016global,seong2021well} on the real line, \cite{chen2020random,dinh2021probabilistic,oh2020solving} in the probabilistic setting.

In this work, we study the sharp global well-posedness of the 4NLS \eqref{4NLS} in modulation spaces $M^s_{2,q}(\mathbb{R})$ for $s
\geq 1/2$ and $2\leq q<\infty$. The sharpness arises from the ill-posed theory in \cite{maeda2011existence,segata2010well}, which implies that the 4NLS can not be solved in the Sobolev space $H^s(\mathbb{R})$ with $s<1/2$ by a contraction mapping principle. Many researchers have directed their attention towards solving nonlinear PDEs in modulation spaces, see \cite{Gu16,GRW21,KaKoIt14,OhWang20,Wa13} and the references therein. We recall the definition of the modulation spaces $M^s_{p,q}(\mathbb{R})$, which was introduced by Feichtinger \cite{Fei2}. Let $\psi \in \S(\R)$ such that
\begin{align*}
{\rm supp}\ \psi \subset [-1, 1],
\qquad  \psi_n(\xi) := \psi(\xi - n) \qquad \text{and} \qquad \sum_{n \in \Z} \psi_n(\xi) \equiv 1,
\end{align*}
then the modulation spaces $M^s_{p,q}(\mathbb{R})$ are defined as the collection of all tempered distributions
$f\in\S'(\R)$ such that $\|f\|_{M^s_{p,q}}<\infty$, where
the $M^s_{p,q}$-norm can be equivalently defined in the following way (cf.
\cite{WaHu07,WaHaHuGu11}):
\begin{align}
\|f\|_{M^s_{p,q}(\mathbb{R})}= \big\| \langle n \rangle^{s}
\|\Box_n f\|_{L^p(\mathbb{R})} \big\|_{\ell^q_n(\mathbb{Z})},  \label{mod-space1}
\end{align}
where $\Box_n = \mathscr{F}^{-1} \psi_n(\xi) \mathscr{F}$, $\mathscr{F}$ ($\mathscr{F}^{-1}$) denotes the (inverse) Fourier transform on $ \mathbb{R}$, and  $\langle n \rangle=(1+|n|^2)^{1/2}$.

By incorporating the $U^p$ and $V^p$ spaces into modulation spaces, and applying Strichartz-type estimates, bilinear estimates and $L^4$ estimates, we derive nonlinear estimates and achieve local well-posedness in modulation spaces. In order to upgrade local well-posedness to global well-posedness, we need to develop the specific properties of the 4NLS \eqref{4NLS} and obtain the global bounds of the solutions. The main idea is from Killip-Visan-Zhang \cite{KVZ18} where they exploited the complete integrable structure and obtained the conservation laws on the $H^s$-norm of solutions for KdV, NLS and mKdV equations. The 4NLS \eqref{4NLS} belongs to the classical ZS-AKNS system and admits the following Lax pair formulation \cite{Zhang09}:
$$
\frac{d\ }{dt}  L(t;\kappa) = [P(t;\kappa), L(t;\kappa)] \quad\text{with}\quad
	L(t;\kappa) = \begin{bmatrix}-\partial_x+\kappa & u(x) \\ -\overline{u}(x) & -\partial_x-\kappa \end{bmatrix}
$$
and some operator pencil $P(t;\kappa)$ whose precise form will play no role in this paper. We define the following perturbation determinant
\begin{align}\label{alpha}
\alpha(\kappa;u) := {\rm Re} \sum_{\ell=1}^\infty \frac{1}{\ell} {\rm tr}\Bigl\{\bigl[(\kappa-\partial_x)^{-1/2} u (\kappa+\partial_x)^{-1} \overline{u} (\kappa-\partial_x)^{-1/2} \bigr]^\ell\Bigr\}
\end{align}
which formally represents
\begin{align}
-{\rm Re}\log\det\left( \begin{bmatrix} (-\partial_x+\kappa)^{-1} & 0 \\ 0 & (-\partial_x-\kappa)^{-1} \end{bmatrix}
	\begin{bmatrix}-\partial_x+\kappa & u(x) \\ - \overline{u}(x) & -\partial_x-\kappa \end{bmatrix} \right).
\end{align}
Here, the trace of an operator $A$ on $L^{2}(\mathbb{R} )$ (${\rm tr} A$) is defined in Definition \ref{tr}.
We will show that $\alpha(\kappa;u)$ is conserved under the 4NLS flow \eqref{4NLS} in Proposition \ref{conser0}. The $M^s_{2,q}(\mathbb{R})$-norm of solutions can be related to the size of the leading term in the series $\alpha(\kappa;u(t))$, therefore it is almost conserved and can be controlled  by the norm of the initial data. Finally, the local well-posedness can be extended to global well-posedness.

In this work, we shall prove the following theorems:
\begin{thm}\label{theorem1}
Let $2\leq q <\infty$, $s\geq \frac12$, and $u_0 \in  M^{s}_{2,q}$. Then there exists a time $T>0$ such that 4NLS \eqref{4NLS} is locally well posed in $C([0,T];  M^{s}_{2,q}) \cap X^{s}_{q,S}([0,T])$, where $X^{s}_{q,S}$ is defined in Definition \ref{Xsq}.
\end{thm}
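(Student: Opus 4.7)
The natural approach is a contraction mapping argument applied to the Duhamel map
$$ \Phi(u)(t) = S(t) u_0 - i\int_0^t S(t-\tau)\, N(u)(\tau)\, d\tau, $$
where $S(t)=e^{it\partial_x^4}$ and $N(u)$ collects the five nonlinear terms of \eqref{4NLS}, on a small ball of $X^s_{q,S}([0,T])$. The $X^s_{q,S}$ space is built from $U^p$--$V^p$ atoms adapted to the linear propagator, localized by $\Box_n$ and summed with weight $\langle n\rangle^s$ in $\ell^q_n$, so that it both embeds into $C([0,T];M^s_{2,q})$ and provides the correct dispersive/Strichartz-type norms for the 4NLS flow.

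The plan splits into three independent ingredients. First, the homogeneous piece $S(t)u_0$ is placed in $X^s_{q,S}([0,T])$ by the standard embedding of constant atoms into $U^2$, which gives $\|S(\cdot)u_0\|_{X^s_{q,S}} \lesssim \|u_0\|_{M^s_{2,q}}$. Second, the inhomogeneous Duhamel piece is controlled by the $U^p$--$V^p$ duality identity: bounding the integral in $X^s_{q,S}([0,T])$ is equivalent (up to losing an endpoint and passing to $V^p$-atoms) to estimating a space-time pairing $\int\!\!\int N(u)\,\overline{v}\,dx\,dt$ for test functions $v$ in the dual norm. Third, one proves the nonlinear estimates
$$ \|N(u)\|_{\text{Duhamel-dual}([0,T])} \lesssim T^{\theta}\, \bigl(\|u\|^{3}_{X^s_{q,S}([0,T])} + \|u\|^{5}_{X^s_{q,S}([0,T])}\bigr) $$
for some $\theta>0$ (or a small power of $T$ from Hölder in time), which is exactly what is needed to close the contraction and the Lipschitz estimate for differences.

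The heart of the proof is therefore the multilinear estimate. I would decompose each factor dyadically via the $\Box_{n_i}$ projectors, classify frequency interactions into high-high-low and non-resonant regimes, and on each regime combine the Strichartz estimates $\|\Box_n S(t)f\|_{L^p_t L^r_x}\lesssim \|\Box_n f\|_{L^2}$ for the fourth-order semigroup with the bilinear refinement
$$ \bigl\|\Box_{n_1} S(t) f\cdot \Box_{n_2} S(t) g\bigr\|_{L^2_{t,x}} \lesssim \frac{1}{\langle n_1^3-n_2^3\rangle^{1/2}} \|\Box_{n_1} f\|_{L^2} \|\Box_{n_2} g\|_{L^2}, $$
which is the fourth-order analogue of the Bourgain bilinear estimate and encodes the smoothing gain of size comparable to $|n_1|^{3/2}$. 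This gain is precisely what absorbs the two derivatives appearing in the cubic terms $u_{xx}|u|^2$, $\overline{u}_{xx}u^2$, $u_x^2\overline{u}$, $u|u_x|^2$. The quintic term $u|u|^4$ is harmless away from the critical regularity and is handled by iterating an $L^4_{t,x}$--$L^4_{t,x}$--$L^\infty$-type Strichartz bound together with the $\ell^2 \hookrightarrow \ell^q$ inclusion.

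The main obstacle I anticipate is the cubic nonlinearity $u_{xx}|u|^2$ at the endpoint $s=\tfrac12$ when two of the three frequencies are comparable, since the bilinear smoothing degenerates on the resonant set $n_1^3=n_2^3$. Here one must use the $\Box_n$ localization: within a single unit frequency ball the derivatives cost only $O(1)$, so the resonant contribution can be estimated crudely by $L^4 L^4 L^\infty$ and absorbed, while for separated frequencies the bilinear gain beats the two derivatives with margin. Once these multilinear estimates are established, the contraction on a small ball of $X^s_{q,S}([0,T])$ with $T$ chosen small in terms of $\|u_0\|_{M^s_{2,q}}$ closes immediately, and the continuity in time with values in $M^s_{2,q}$ follows from the embedding $X^s_{q,S}([0,T])\hookrightarrow C([0,T];M^s_{2,q})$ built into the definition of the solution space.
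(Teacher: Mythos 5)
Your overall plan matches the paper's closely: contraction on the Duhamel map; the duality $(X^s_q)^* = Y^{-s}_{q'}$ from Propositions \ref{UVprop3}--\ref{UVprop4} to turn the inhomogeneous term into a space--time pairing; frequency-uniform decomposition with $\Box_n$ followed by a case analysis organized by the sizes and signs of the frequency centers; and a combination of bilinear estimates (with gain $|\xi_1^3-\xi_2^3|^{-1/2}$ coming from the Jacobian, as in your form) with Strichartz and $L^4$ estimates. Your $L^4$--$L^4$--$L^\infty$ strategy for the quintic term also lines up with the paper.

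The gap is in the resonant cubic interaction $\lambda_0\approx\lambda_1\approx\lambda_2\approx\lambda_3\approx n$ for $u_{xx}|u|^2$. Your claim that ``within a single unit frequency ball the derivatives cost only $O(1)$'' is false: the symbol $\xi^2$ of $\partial_x^2$ is $\approx n^2$ on the support $[n-1,n+1]$ of $\Box_n$, so $\partial_x^2 u_n$ still costs $\langle n\rangle^2$. To offset $\langle n\rangle^{1/2}\cdot\langle n\rangle^2$ (the dual weight times two derivatives) against the $\langle n\rangle^{3/2}$ supplied by the three $X^{1/2}_q$ weights, one needs a net frequency gain of $\langle n\rangle^{-1}$ spread over the four factors. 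The paper achieves this in the term $\mathscr{L}^+_l(u,v)$ by placing all four factors in $L^8_tL^4_x$, which is Strichartz admissible for the biharmonic flow and gives $\langle n\rangle^{-1/4}$ per factor via \eqref{strichartzc}. Your proposed $L^4$--$L^4$--$L^\infty$ split for this case falls short: each $L^4_{x,t}$ factor provides $\langle n\rangle^{-1/4}$ (Lemma \ref{L4} with $|I|=1$), but a factor bounded by $\|\cdot\|_{L^\infty_{x,t}}\lesssim\|\cdot\|_{V^2_S}$ provides no frequency gain, so you obtain at most $\langle n\rangle^{-1/2}$ of smoothing and the $\ell_n$-pairing does not close. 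Since this configuration is exactly what fixes the sharp regularity $s=\tfrac12$, the resonant case must be redone with all four factors in a Strichartz-admissible space.
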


\begin{thm}\label{theorem2}
	Fix parameters $s$ and $q$ conforming to either of the following restrictions: $q=2$ and $-\frac12<s<1$, or  $2< q< \infty $ and $0\leq s<1-\frac{1}{q}$. Then there exists a nondecreasing function $C_{s,q}:[0,\infty)\rightarrow [0,\infty)$ such that
\begin{align}\label{Result}
\sup_{t\in\mathbb{R}}\| u(t) \|_{M^s_{2,q}} \lesssim  C_{s,q}(\| u_0 \|_{M^s_{2,q}}).
\end{align}
for any Schwartz class solution $u$ to \eqref{4NLS} on $\mathbb{R} $.
\end{thm}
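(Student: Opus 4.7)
The plan is to follow the blueprint of Killip--Visan--Zhang \cite{KVZ18} and convert the conservation of $\alpha(\kappa;u)$ from Proposition~\ref{conser0} into a conservation-type statement for the $M^s_{2,q}$-norm. The strategy has three main ingredients: an explicit formula for the quadratic leading term of $\alpha$, a smallness argument for the higher-order terms, and an equivalence between an appropriately weighted combination of the quadratic term and the modulation-space norm.

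First, I will compute the leading term of the series explicitly. Using cyclicity of the trace together with the Green's functions of $\kappa\pm\partial_x$, a Fourier computation yields
\begin{align*}
\alpha_{(1)}(\kappa;u):=\mathrm{Re}\,\mathrm{tr}\bigl[(\kappa-\partial_x)^{-1/2}u(\kappa+\partial_x)^{-1}\overline u(\kappa-\partial_x)^{-1/2}\bigr]
=\frac{1}{2\pi}\int_{\mathbb{R}}\frac{2\kappa\,|\widehat u(\xi)|^2}{4\kappa^2+\xi^2}\,d\xi.
\end{align*}
Thus $\alpha_{(1)}(\kappa;u)\sim\kappa^{-1}\|P_{\leq\kappa}u\|_{L^2}^2+\kappa\|P_{>\kappa}u\|_{\dot H^{-1}}^2$, and at $\kappa=\kappa_n\sim\langle n\rangle$ the single-block contribution yields $\|\Box_n u\|_{L^2}^2\lesssim \langle n\rangle\,\alpha_{(1)}(\kappa_n;u)$. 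A companion upper bound is obtained by integrating (resp.\ summing) against a $\kappa$-weight tailored to reproduce the $M^s_{2,q}$-norm; the model identity is $\int_0^\infty \kappa^{2s-1}\alpha_{(1)}(\kappa;u)\,d\kappa\sim \|u\|_{\dot H^{s-1/2}}^2$ for $s\in(-1/2,1/2)$, which I will adapt to larger $s$ via differentiation in $\kappa$ and to the $\ell^q$-summation structure inherent to modulation spaces.

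Second, I will show that the higher-order terms $\alpha_{(\ell)}(\kappa;u)$, $\ell\ge 2$, are negligible compared to $\alpha_{(1)}$ once $\kappa$ exceeds a threshold $\kappa_0=\kappa_0(\|u\|_{M^s_{2,q}})$. The key input is a Hilbert--Schmidt bound $\|(\kappa-\partial_x)^{-1/2}u(\kappa+\partial_x)^{-1/2}\|_{\mathrm{HS}}\lesssim \kappa^{-1/2}\|u\|_{L^2}$, refined on modulation-block scales to be compatible with the $\ell^q$-norm. To push $\kappa_0$ down to scales reaching every $n$, I will employ a high/low-frequency decomposition $u=v+w$ with $\|w\|_{L^2}$ small enough that the series converges unconditionally, combined with the scaling symmetry of \eqref{4NLS} to reduce large data to small data.

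Combining the previous two steps at the family $\kappa=\kappa_n\sim\langle n\rangle$ (or at a continuous family of $\kappa$'s integrated with the appropriate weight) and using conservation of $\alpha$ yields
\begin{align*}
\sup_{t\in\mathbb{R}}\|u(t)\|_{M^s_{2,q}}\lesssim C_{s,q}(\|u_0\|_{M^s_{2,q}}).
\end{align*}
The upper bound on $s$ (namely $s<1$, or $s<1-1/q$ for $q>2$) arises from the borderline convergence of the $\kappa$-integral and the $\ell^q$-Schur test, while the lower bound $s>-1/2$ (or $s\ge 0$ for $q>2$) comes from scaling criticality and the requirement that the modulation-block bookkeeping close without logarithmic losses. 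The main obstacle will be the second step: quantifying the smallness of $\alpha_{(\ell)}$ in modulation norms. Unlike Littlewood--Paley projections, the operators $\Box_n$ do not interact cleanly with multiplication, so the iterated operator estimates must be re-derived block by block, tracking the dependence on $n$ precisely enough to close the argument at the endpoints $s=1-1/q$ (for $q>2$) and $s=1$ (for $q=2$).
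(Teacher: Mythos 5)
Your high-level blueprint is right — use conservation of $\alpha(\kappa;u)$, bound the $\ell\ge 2$ tail by the Hilbert--Schmidt norm, and relate the quadratic term to the modulation norm — but the mechanism you propose for extracting the $\ell^q_n$ block structure does not work, and the crucial device the paper uses is missing from your outline.

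Your formula $\alpha_{(1)}(\kappa;u)=\frac{1}{2\pi}\int\frac{2\kappa\,|\widehat u(\xi)|^2}{4\kappa^2+\xi^2}\,d\xi$ is the real-$\kappa$ case. The paper's Lemma~\ref{le1} evaluates the same trace for $\kappa\in\mathbb{C}$ with $\operatorname{Re}\kappa>0$ and obtains
\[
\operatorname{Re}\operatorname{tr}\bigl\{(\kappa-\partial_x)^{-1}u(\kappa+\partial_x)^{-1}\overline u\bigr\}
=\int\frac{2(\operatorname{Re}\kappa)\,|\widehat u(\xi+2\operatorname{Im}\kappa)|^2}{4(\operatorname{Re}\kappa)^2+\xi^2}\,d\xi,
\]
so the imaginary part of $\kappa$ produces a \emph{translation} of the frequency weight. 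The paper then sets $\kappa=\kappa_n=\kappa_0+i\,n/2$ with $\operatorname{Re}\kappa_0=O(1)$ fixed and lets the imaginary part run over $\mathbb{Z}$: this is precisely what singles out the modulation block $\|\hat u\|_{L^2(I_n)}$ from the leading term, and makes the $\ell^{q/2}_n$-norm of $\langle n\rangle^{2s}\alpha(\kappa_n;u)$ comparable (via Lemma~\ref{equi}, the equivalent-norm lemma) to $\|u\|_{M^s_{2,q}}^2$. In your scheme you instead take $\kappa_n\sim\langle n\rangle$ real, or integrate against $\kappa^{2s-1}\,d\kappa$; the former gives $\alpha_{(1)}(\kappa_n;u)\sim\langle n\rangle^{-1}\|u\|_{L^2}^2$ for every large $n$, so $\|\Box_nu\|_{L^2}^2\lesssim\langle n\rangle\,\alpha_{(1)}(\kappa_n;u)\lesssim\|u\|_{L^2}^2$ is catastrophically lossy and cannot close any $\ell^q_n$ summation; the latter reconstructs Sobolev/Besov-type square functions, not the frequency-uniform $\ell^q_n$ structure of $M^s_{2,q}$. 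Differentiation in (real) $\kappa$ does not help because it cannot localize the frequency window around each $n$. In short, the decisive idea — exploiting $\operatorname{Im}\kappa$ for frequency translation and then summing in $n$, exactly as the paper's own Remark stresses — is absent, and without it the argument does not reach the modulation norm. You would also need to record the companion Hilbert--Schmidt identity of Lemma~\ref{le1} (with the log weight) to justify the tail-smallness condition, and an analogue of the equivalence lemma $\|f\|_{M^s_{2,q}}\lesssim\|f\|_{Z^s_{\kappa_0,q}}\lesssim\kappa_0\|f\|_{M^s_{2,q}}$ to upgrade the a priori bound on $\|\alpha(\kappa_n;u(t))\|_{\ell^{q/2}_n}$ to $\|u(t)\|_{M^s_{2,q}}$. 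The rest of your plan (HS smallness threshold, scaling to reduce large data to small, continuity argument) is consistent with the paper.
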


As a consequence of Theorem \ref{theorem1} and Theorem \ref{theorem2}, we can establish the subsequent global well-posedness.
\begin{thm}\label{theorem3}
	Let $2\leq q <\infty$, $s\geq \frac12$, and $u_0 \in  M^{s}_{2,q}$. Then 4NLS \eqref{4NLS} is globally well-posed.
\end{thm}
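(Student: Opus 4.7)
The strategy is to combine the local well-posedness from Theorem~\ref{theorem1} with the conservation-law-type a priori bound from Theorem~\ref{theorem2}, augmented by a persistence-of-regularity argument for those parameters that Theorem~\ref{theorem2} does not directly cover.

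\smallskip
\emph{Direct range.} When $(s, q)$ satisfies $s \geq 1/2$ together with either $q = 2$ and $s < 1$, or $q > 2$ and $s < 1 - 1/q$, both theorems apply simultaneously. Given $u_0 \in M^s_{2, q}$, I would approximate by Schwartz data $u_{0, n} \to u_0$ in $M^s_{2, q}$. The complete integrability of \eqref{4NLS} ensures that each $u_{0, n}$ yields a unique global smooth solution $u_n$, and Theorem~\ref{theorem2} then delivers the uniform-in-$n$ bound
\begin{align*}
\sup_{t \in \mathbb{R}} \|u_n(t)\|_{M^s_{2, q}} \leq C_{s, q}\bigl(\|u_0\|_{M^s_{2, q}} + 1\bigr),
\end{align*}
whereupon one applies Theorem~\ref{theorem1} on successive short intervals whose length depends only on the above bound, using the continuous dependence of the data-to-solution map to pass to the limit $n \to \infty$ and extend the local solution to all of $\mathbb{R}$.

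\smallskip
\emph{Persistence range.} For the remaining parameters, namely $q = 2$ with $s \geq 1$, or $q > 2$ with $s \geq 1 - 1/q$, I would fix some $s_0$ strictly inside Theorem~\ref{theorem2}'s range with $s_0 \geq 1/2$ (for instance, $s_0$ slightly below $1$ if $q = 2$, or slightly below $1 - 1/q$ if $q > 2$). Since $s_0 \leq s$, the embedding $M^s_{2, q} \hookrightarrow M^{s_0}_{2, q}$ is trivial, so the previous step yields a global solution $u$ whose low norm $\sup_t \|u(t)\|_{M^{s_0}_{2, q}}$ is a priori bounded in terms of $\|u_0\|_{M^s_{2, q}}$. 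To upgrade this to control of the high norm, I would re-examine the bilinear and multilinear estimates underlying Theorem~\ref{theorem1} and extract a refined form in which $\|u\|_{M^s_{2, q}}$ enters only linearly on the right-hand side, while the remaining factors are absorbed into the a priori bounded $\|u\|_{M^{s_0}_{2, q}}$. Combining this with the Duhamel formula and a Gr\"onwall argument on each local interval forbids blow-up of the high norm in finite time, and together with the local theory gives global existence in $M^s_{2, q}$.

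\smallskip
\emph{Main obstacle.} The persistence step is the most delicate part of the argument. Producing a nonlinear estimate that is linear in the high norm and polynomially controlled by the low norm requires a careful dyadic decomposition of both the quintic terms and the cubic terms carrying derivatives in \eqref{4NLS}: among the interacting copies of $u$, only the factor carrying the highest frequency is measured in $M^s_{2, q}$, while the others are absorbed in the $M^{s_0}_{2, q}$ norm. Once such a Gr\"onwall-friendly estimate is established in the $X^s_{q, S}$ framework used for Theorem~\ref{theorem1}, the remaining iteration and approximation steps close the proof across the full parameter range.
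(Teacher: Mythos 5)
Your proposal takes essentially the same approach as the paper, which indeed proves Theorem~\ref{theorem3} by combining the contraction-mapping local theory of Theorem~\ref{theorem1} with the a priori bounds of Theorem~\ref{theorem2} in the overlapping parameter range, and then invokes a persistence-of-regularity argument for larger $s$. The one cosmetic difference is that the paper's remark fixes the low-regularity level at $s_0 = 1/2$ (that is, it always uses the global $M^{1/2}_{2,q}$ bound as the foundation), whereas you choose $s_0$ close to the upper endpoint of Theorem~\ref{theorem2}'s window; both choices lie in the common range of Theorems~\ref{theorem1} and~\ref{theorem2} and lead to the same conclusion, and your sketch of the Gr\"onwall-type persistence mechanism correctly fills in what the paper leaves implicit.
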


\begin{rem}
For $q=2$ and $\frac12\leq s<1$, or  $2< q< \infty $ and $\frac12\leq s<1-\frac{1}{q}$, Theorem \ref{theorem3} is a corollary to Theorem \ref{theorem1} and Theorem \ref{theorem2}. For larger $s$, a persistence of regularity argument
with the global bounds on the $M^{1/2}_{2,q}$-norms of solutions implies the corresponding global well-posedness.
\end{rem}
\begin{rem}
Local well-posedness in Theorem \ref{theorem1} is achieved by using PDE techniques and the contraction principle, without requiring the complete integrable structure. On the other hand, for the general 4NLS operator $i\partial_t+\Delta^2\pm\Delta$, we can combine Biharmonic estimates in low frequency with Strichartz estimates to obtain nonlinear estimates. Therefore, Theorem \ref{theorem1} is available for the following general 4NLS equation:
\begin{equation*}
	iu_{t}+u_{xx}+\gamma u_{xxxx}+a_1u|u|^{2}+a_2u_{xx}|u|^{2}+a_3\overline{u}_{xx}u^{2}+a_4u_{x}^{2}\overline{u}+a_5u|u_{x}|^{2}+a_6u|u|^{4}=0,  \end{equation*}
which improves the local results in \cite{huo2005cauchy,huo2007refined,segata2003well,segata2004remark}.
\end{rem}
\begin{rem}
For the spectral parameter $\kappa\in \mathbb{C}$ in \eqref{alpha}, its real part and imaginary part correspond to dilation and translation transformations, respectively. Therefore, the advantage of the imaginary part of $\kappa$ can be fully developed in the modulation space. Combining with the scaling symmetry of the 4NLS equation \eqref{4NLS} itself, we can obtain Theorem \ref{theorem2} for $2< q< \infty $ and $\frac12\leq s<1-\frac{1}{q}$. If the equation does not enjoy the scaling symmetry, the result shall need the small initial condition. However, for $q=2$ and $\frac12\leq s<1$ ($M^s_{2,2}=B^s_{2,2}=H^s$), we can choose ${\rm Re} \kappa$ large enough to remedy the absence of scaling symmetry and achieve the result for large initial data in Besov space, see \cite{Chen23}.
\end{rem}
Therefore, we have the following corollary, which is the sharp global well-posedness via the contraction principle method, since the data-to-solution map fails to be uniformly continuous when $s<1/2$, see \cite{segata2010well}.
\begin{cor}
 The 4NLS equation \eqref{4NLS2} is globally well-posed in $H^s$ for $s\geq 1/2$.
\end{cor}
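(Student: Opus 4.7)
The plan is to combine Theorems \ref{theorem1} and \ref{theorem2} at the value $q=2$, exploiting the identity $H^s=M^{s}_{2,2}$, together with the observation that equation \eqref{4NLS2} is (up to the factor $\gamma$) the sum of two commuting flows in the AKNS integrable hierarchy: the classical cubic NLS and the 4NLS \eqref{4NLS}. This should let us transfer the local theory and the conserved perturbation determinant $\alpha(\kappa;u)$ from \eqref{4NLS} to \eqref{4NLS2} with essentially no extra work.

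First, I would establish local well-posedness in $H^s$ for $s\geq\tfrac12$. According to the remark following Theorem \ref{theorem1}, the local argument based on the $U^p$–$V^p$ spaces extends to the operator $i\partial_t+\gamma\partial_x^4+\partial_x^2$ by pairing Strichartz estimates for the fourth-order part with Biharmonic estimates that absorb the second-order perturbation in low frequencies. The single extra nonlinear term $2u|u|^2$ appearing in \eqref{4NLS2} is of strictly lower order than $u_{xx}|u|^2$, $\overline u_{xx}u^2$, $u_x^2\overline u$, and $u|u_x|^2$, so the bilinear estimates already proved control it immediately, and the contraction mapping argument in $C([0,T];H^s)\cap X^{s}_{2,S}([0,T])$ carries through.

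Next, I would promote this to an a priori bound on $\sup_t\|u(t)\|_{H^s}$ for the range $\tfrac12\leq s<1$. The key point is that the perturbation determinant $\alpha(\kappa;u)$ in \eqref{alpha} is conserved by the NLS flow (this is the Killip–Visan–Zhang observation, a special case of \cite{KVZ18}) and, by Proposition \ref{conser0}, also by the 4NLS flow \eqref{4NLS}. Since \eqref{4NLS2} is a linear combination with coefficients $1$ and $\gamma$ of these two commuting Hamiltonian flows from the AKNS hierarchy, $\tfrac{d}{dt}\alpha(\kappa;u(t))=0$ along solutions of \eqref{4NLS2} as well. Consequently the argument proving Theorem \ref{theorem2} at $q=2$ applies verbatim, giving
\begin{equation*}
\sup_{t\in\mathbb{R}}\|u(t)\|_{H^s}\leq C_s(\|u_0\|_{H^s}), \qquad \tfrac12\leq s<1.
\end{equation*}
Combined with the local theory this yields global well-posedness in $H^s$ for $\tfrac12\leq s<1$. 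For $s\geq 1$, the global $H^{1/2}$ control together with a persistence-of-regularity argument (as mentioned in the remark following Theorem \ref{theorem3}) produces global $H^s$ solutions whose norms grow at most polynomially in time.

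The step I expect to be the most delicate is the conservation of $\alpha(\kappa;u)$ under the mixed flow \eqref{4NLS2}. Formally this is immediate from the commuting Hamiltonian structure, but in the low-regularity setting $s\geq\tfrac12$ one must make sure that the two contributions $\tfrac{d}{dt}\alpha|_{\mathrm{NLS}}$ and $\gamma\tfrac{d}{dt}\alpha|_{\eqref{4NLS}}$ can be added term-by-term in the power series \eqref{alpha}. This is safe because each conservation statement is already established for Schwartz initial data (which is the setting of Theorem \ref{theorem2}), and linearity in the vector field makes the sum telescope to zero. Everything else in the proof is a direct citation of Theorems \ref{theorem1}–\ref{theorem2} and the sharpness comment follows from the failure of uniform continuity of the data-to-solution map for $s<\tfrac12$ established in \cite{segata2010well}.
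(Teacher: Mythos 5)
The paper states this corollary without an explicit proof, and your argument supplies the natural one: the local theory of Theorem \ref{theorem1} extends to \eqref{4NLS2} per the remark on the general 4NLS operator, and the a priori bound of Theorem \ref{theorem2} at $q=2$ carries over once one verifies that $\alpha(\kappa;u)$ is conserved under the mixed flow. Your key observation --- that $D_u\alpha$ is linear in the vector field and annihilates separately the NLS vector field (by \cite{KVZ18}) and the 4NLS vector field (by Proposition \ref{conser0}), hence annihilates any fixed linear combination of the two --- is exactly what is needed and is correct for Schwartz data with $\mathrm{Re}\,\kappa$ large; note, as you yourself observe in passing, that the commuting-Hamiltonians structure you invoke at the outset is stronger than necessary, since linearity plus per-flow vanishing already telescopes to zero. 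The remaining ingredients (the $q=2$ branch of Theorem \ref{theorem2} uses $\mathrm{Re}\,\kappa$ large rather than scaling, so the absence of scaling for \eqref{4NLS2} is harmless, and persistence of regularity handles $s\geq 1$) match the remarks following Theorems \ref{theorem2} and \ref{theorem3}.
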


{\bf Notations.} Let $c < 1$, $C>1$  denote positive universal constants, which can
be different at different places. We use the notation $a \lesssim b$ if there is an independently constant $C$ such that $a \leqslant Cb$. Denote $a\sim b$ if $a\lesssim b$ and $b\lesssim a$. $a\approx b$ means that $|a-b|\leq C$. Denote $a\vee b= \max \{a, b\}, a\wedge b=\min \{a,b\}.$ $p'$ is the dual number of $p \in
[1,\infty]$, i.e., $1/p+1/p'=1$.
The (inverse) Fourier transform $(\FF) \F $ in $\mathbb{R}$ can be defined as follows:
\begin{align*}
	(\F f)(\xi) = \hat{f}(\xi) = \frac{1}{\sqrt{2\pi}}\int_{\mathbb{R}} f(x) e^{-ix\xi} \ d x,\quad
	(\FF f)(x) = \check{f}(x)=\frac{1}{\sqrt{2\pi}} \int_{\mathbb{R}}f(\xi) e^{ix\xi}\ d\xi.
\end{align*}
Let $I\subset \mathbb{R}$ be an interval of finite length, we denote
$ u_\lambda = \Box_\lambda u, \ \  u_I = \sum_{\lambda\in I \cap \mathbb{Z}} u_\lambda.$

\section{Preliminaries}

In this section, we recall some function spaces utilized in establishing the well-posedness theory for nonlinear dispersive equations.  $U^p$ and $V^p$ spaces were initially introduced by Koch and Tataru \cite{CHT12,KoTa05,KoTa07,KoTa12} in the study of NLS and mKdV. They have played a crucial role in establishing the well-posedness of various equations, including the KP-II equation \cite{HaHeKo09}, the cubic NLS equation \cite{Gu16,KoTa07,KoTa12}, the DNLS equation \cite{GRW21}, and the mKdV equation \cite{Chen20}.

\subsection{Function spaces}
\begin{defn}\label{tr}
Let $A$ be an operator on $L^{2}(\mathbb{R} )$ with continuous integral kernel $K(x,y)$. The trace of operator $A$ is defined by
$$\mathrm{tr}(A):=\int K(x,x)dx.$$
If $A$ is a Hilbert-schmidt operator with integral kernel $K(x,y)\in L^{2}(\mathbb{R}\times \mathbb{R} )$, then
$$\mathrm{tr}(A^{2} )=\iint_{\mathbb{R}^{2} } K(x,y)K(y,x)dxdy,$$
and
\begin{align}
\left \| A \right \| _{\mathfrak{I}_{2}  }^{2} :=\mathrm{tr}(AA^{*} )=\iint_{\mathbb{R}^{2} }\left | K(x,y) \right | ^{2}dxdy.\label{A}
\end{align}
\end{defn}

Let $\mathcal{Z}$ be the set of finite partitions $-\infty= t_0 <t_1<...< t_{K-1} < t_K =\infty$. In the following, we consider functions taking values in $L^2:=L^2(\mathbb{R}^d;\mathbb{C})$, but in the general $L^2$ can be replaced by an arbitrary Hilbert space or general Banach space.

\begin{defn}
Let $1\leq p <\infty$. For any $\{t_k\}^K_{k=0} \in \mathcal{Z}$ and $\{\phi_k\}^{K-1}_{k=0} \subset L^2$ with $\sum^{K-1}_{k=0} \|\phi_k\|^p_2=1$, $\phi_0=0$. A step function $a: \mathbb{R}\to L^2$ given by
$$
a= \sum^{K}_{k=1} \chi_{[t_{k-1}, t_k)} \phi_{k-1}
$$
is said to be a $U^p$-atom. The set of all $U^p$ atoms is denoted by $\mathcal{A}(U^p)$.   The $U^p$ space is
$$
U^p:=\left\{u= \sum^\infty_{j=1} c_j a_j : \ a_j \in \mathcal{A}(U^p), \ \ c_j \in \mathbb{C}, \ \ \sum^\infty_{j=1} |c_j|<\infty  \right\}
$$
for which the norm is given by
$$
\|u\|_{U^p}:= \inf \left\{\sum^\infty_{j=1} |c_j| : \ \ u= \sum^\infty_{j=1} c_j a_j , \ \  \ a_j \in \mathcal{A}(U^p), \ \ c_j \in \mathbb{C} \right\}.
$$
\end{defn}

\begin{defn}
Let $1\leq p <\infty$. We define $V^p$ as the normed space of all functions $v: \mathbb{R} \to L^2$ such that $\lim_{t\to \pm \infty} v(t)$ exist and for which the norm
$$
\|v\|_{V^p} := \sup_{\{t_k\}^K_{k=0} \in \mathcal{Z}} \left( \sum^K_{k=1} \|v(t_k)-v(t_{k-1})\|^p_{L^2}\right)^{1/p}
$$
is finite, where we use the convention that $v(-\infty) = \lim_{t\to \infty} v(t)$ and $v(\infty)=0$ (here $v(\infty)$ and $\lim_{t\to  \infty} v(t)$ are different notations). Likewise, we denote by $V^p_-$ the subspace of all $v\in V^p$ so that $v(-\infty) =0$. Moreover, we define the closed subspace $V^p_{rc}$ $(V^p_{-,rc})$ as all right continuous functions in $V^p$ $(V^p_-)$.
\end{defn}

\begin{defn}
We define
$$
U^p_{S} := e^{\cdot i \partial_x^4} U^p, \ \ \|u\|_{U^p_{S}} = \|e^{ -it \partial_x^4} u \|_{U^p},
$$
$$
V^p_{S} := e^{\cdot i \partial_x^4} V^p, \ \ \|u\|_{V^p_{S}} = \|e^{ -it \partial_x^4} u \|_{V^p},
$$
and similarly for the definition of $V^p_{rc, S}$, $V^p_{-, S}$, $V^p_{-, rc, S}$.
\end{defn}

\begin{defn}
 Besov type Bourgain's spaces $\dot X^{s, b, q}$ are defined by
$$
\|u\|_{\dot X^{s,b,q}} := \left\| \|\chi_{|\tau-\xi^4|\in [2^{j-1}, 2^j)} |\xi|^{s} |\tau-\xi^4|^{b} \widehat{u}(\tau,\xi) \|_{L^2_{\xi,\tau}}  \right\|_{\ell^q_{j\in \mathbb{Z}}}.
$$
\end{defn}

\begin{defn}\label{Xsq}
The frequency-uniform localized $U^2$-spaces $X^s_{q}(I)$ and $V^2$-spaces $Y^s_{q} (I)$  are defined by
\begin{align}
 \|u\|_{X^s_q(I)} & = \left(\sum_{\lambda \in I \cap \mathbb{Z}} \langle \lambda\rangle^{sq}\|\Box_\lambda u\|^q_{U^2}\right)^{1/q}, \quad  X^s_q:= X^s_q(\mathbb{R}),  \label{mod-space2}\\
 \|v\|_{Y^s_q(I)} & = \left(\sum_{\lambda \in I \cap \mathbb{Z}} \langle \lambda\rangle^{sq}\|\Box_\lambda v\|^q_{V^2}\right)^{1/q}, \quad  Y^s_q:= Y^s_q(\mathbb{R}),  \label{mod-space2a} \\
\|u\|_{X^s_{q, S}}: & = \| e^{-it \partial_x^4}  u\|_{X^s_q}, \ \ \ \|v\|_{Y^s_{q, S}}: = \| e^{-it \partial_x^4}  v\|_{Y^s_q}.
\end{align}
\end{defn}

\subsection{Known results}

\begin{prop}[Scaling properties] Let $s\geq 0$, $q\geq 2$, then
	  \begin{align}
\|f(\lambda \cdot)\|_{M^{s}_{2, q}(\mathbb{R}) }\lesssim \lambda^{-1/q'} &\|f(\cdot)\|_{M^{s}_{2, q}(\mathbb{R}) }, \qquad \qquad \rm{for} \qquad 0<\lambda\leq 1;\label{Scaling1}\\
\|f(\lambda \cdot)\|_{M^{s}_{2, q}(\mathbb{R}) }\lesssim \lambda^{s-1/2} &\|f(\cdot)\|_{M^{s}_{2, q}(\mathbb{R}) }, \qquad \qquad \rm{for} \qquad \lambda> 1. \label{Scaling2}
	  \end{align}
\end{prop}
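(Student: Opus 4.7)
The plan is to reduce both inequalities to a Plancherel computation on the Fourier side followed by careful counting of how many unit frequency cubes interact with a dilated one. Since $\widehat{f(\lambda\cdot)}(\xi)=\lambda^{-1}\hat f(\xi/\lambda)$, Plancherel and the substitution $\eta=\xi/\lambda$ yield
\begin{align*}
\|\Box_n f(\lambda\cdot)\|_{L^2}^2 \;=\; \lambda^{-1}\int |\psi(\lambda\eta-n)|^2\,|\hat f(\eta)|^2\, d\eta,
\end{align*}
so only frequencies $\eta\in J_n:=[(n-1)/\lambda,(n+1)/\lambda]$ contribute. Let $I_n$ be the set of integers $m$ with $[m-1,m+1]\cap J_n\neq\emptyset$; then $|I_n|\lesssim 1+1/\lambda$, and the finite overlap of the $\psi_m$'s gives $\|\Box_n f(\lambda\cdot)\|_{L^2}^2 \lesssim \lambda^{-1}\sum_{m\in I_n}\|\Box_m f\|_{L^2}^2$.

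For the regime $0<\lambda\leq 1$ one has $|I_n|\sim 1/\lambda$, so Jensen's inequality with exponent $q/2\geq 1$ applied inside $I_n$ produces
\[
\|\Box_n f(\lambda\cdot)\|_{L^2}^q \lesssim \lambda^{-q/2}\,|I_n|^{q/2-1}\sum_{m\in I_n}\|\Box_m f\|_{L^2}^q \lesssim \lambda^{1-q}\sum_{m\in I_n}\|\Box_m f\|_{L^2}^q.
\]
Each $m$ lies in $O(1)$ of the sets $I_n$ (since $m\in I_n$ forces $|n-\lambda m|\lesssim 1$), and one checks $\langle n\rangle\lesssim \lambda\langle m\rangle+1\lesssim \langle m\rangle$ whenever $m\in I_n$ and $\lambda\leq 1$. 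Summing in $n$ then delivers $\|f(\lambda\cdot)\|^q_{M^s_{2,q}}\lesssim \lambda^{1-q}\|f\|^q_{M^s_{2,q}}$, which is \eqref{Scaling1}.

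The regime $\lambda>1$ is where I expect the main obstacle. Now $|I_n|=O(1)$, and if one replaces each $n$ by its nearest integer $m_n:=\lfloor n/\lambda+\tfrac{1}{2}\rfloor$ and then passes to the $\ell^q$-norm termwise, the count $|\{n:m_n=m\}|\sim \lambda$ leaks a factor $\lambda^{1/q}$ and only yields the weaker exponent $\lambda^{s-1/q'}$. The remedy is to aggregate \emph{before} embedding $\ell^2$ into $\ell^q$: for fixed $m$ the functions $\psi(\lambda\cdot-n)$ with $m_n=m$ have finite overlap in $\eta$ and their supports all lie in $[m-2,m+2]$, which gives
\[
\sum_{n:\,m_n=m}\|\Box_n f(\lambda\cdot)\|_{L^2}^2 \lesssim \lambda^{-1}\sum_{|m'-m|\leq 2}\|\Box_{m'}f\|_{L^2}^2.
\]
The embedding $\ell^2\hookrightarrow\ell^q$ (valid because $q\geq 2$) then upgrades this to the analogous bound with $q$-th powers on both sides, and $\langle n\rangle\lesssim \lambda\langle m\rangle$ whenever $m_n=m$ contributes the factor $\lambda^{sq}$. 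Summing in $m$ gives $\|f(\lambda\cdot)\|^q_{M^s_{2,q}}\lesssim \lambda^{(s-1/2)q}\|f\|^q_{M^s_{2,q}}$, i.e.\ \eqref{Scaling2}. The key point is precisely this aggregation step: doing the $\ell^2\to\ell^q$ embedding after, rather than before, the inner sum in $n$ is what recovers the sharper exponent $s-1/2$.
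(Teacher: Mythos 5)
The paper states this proposition in its ``Known results'' subsection without giving a proof (and without a citation), so there is no argument in the paper to compare against. Judged on its own terms, your proof is correct, and the central observation --- that for $\lambda>1$ one must first aggregate the $\sim\lambda$ indices $n$ sharing the same nearest integer $m_n=m$ in $\ell^2$, using the finite overlap of $\eta\mapsto\psi(\lambda\eta-n)$, and only afterwards invoke $\ell^2\hookrightarrow\ell^q$ --- is exactly what recovers the sharp exponent $s-\tfrac12$. The Plancherel reduction, the overlap count $|I_n|\lesssim 1+1/\lambda$, the Jensen step with exponent $q/2$, the weight comparisons $\langle n\rangle\lesssim\langle m\rangle$ (for $\lambda\le 1$) and $\langle n\rangle\lesssim\lambda\langle m\rangle$ (for $m_n=m$, $\lambda>1$), and the bookkeeping of how many $n$ map to each $m$ are all handled correctly, and both final exponents agree with \eqref{Scaling1}--\eqref{Scaling2}; the $q=2$ case reproduces the familiar $H^s$ scaling as a sanity check.

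One small quibble with a side remark: you claim the naive route --- replacing $n$ by $m_n$ termwise and then taking $\ell^q$ --- yields the exponent $s-1/q'=s-1+1/q$. Running that computation, the Plancherel factor $\lambda^{-1/2}$, the weight factor $\lambda^{s}$, and the multiplicity factor $\lambda^{1/q}$ combine to give $s-\tfrac12+\tfrac1q$, not $s-1/q'$; the two differ by $\tfrac12$. This does not affect your actual argument, which carries out the aggregated version and lands on $s-\tfrac12$, but the stated ``weaker exponent'' in the motivating sentence is off.
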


\begin{prop}{\rm \cite{Chen23}}\label{lemchen}
	Suppose that the operator $A$ is given on the Fourier side by
	  \begin{align*}
	  \widehat{Af}(\xi )=\int m(\xi ,\eta )\hat{f}(\eta ) d\eta ,
	  \end{align*}
	then the following results hold
	  \begin{align}
	  \widehat{A^{*} f}(\xi )=\int &\overline{m(\eta  ,\xi )} \hat{f}(\eta ) d\eta,\\
	  \mathrm{tr}(A)=\frac{1}{2\pi}\int m(\xi ,\xi )d\xi ,\quad \quad &\left \| A \right \| _{\mathfrak{I}_{2}  }^{2} =\frac{1}{2\pi}\iint_{\mathbb{R}^{2} }\left | m(\xi ,\eta ) \right | ^{2}d\xi d\eta. \nonumber
	  \end{align}
Moreover, if $A_1, A_2, \cdots A_n$ are Hilbert-Schmidt operators with Fourier kernels\\ $m_1, m_2, \cdots m_n$, then
\begin{align}
{\rm tr}(A_1A_2\cdots A_n)=\frac{1}{2\pi}\int_{\mathbb{R}^n} m_1(\xi_1, \xi_2)\cdots m_n(\xi_n, \xi_1)\ d\xi_1 \cdots d\xi_n.\label{trace}
\end{align}
\end{prop}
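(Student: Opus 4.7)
The plan is to treat every claim as a consequence of the bridge between the physical-side integral kernel $K(x,y)$ of $A$ and its Fourier-side kernel $m(\xi,\eta)$. I would first write out $Af(x)$ by inverting the identity $\widehat{Af}(\xi)=\int m(\xi,\eta)\hat f(\eta)\,d\eta$ and substituting $\hat f(\eta)=\tfrac{1}{\sqrt{2\pi}}\int f(y)e^{-iy\eta}\,dy$; collecting exponentials exhibits $A$ as an integral operator with kernel of the schematic form
\[
K(x,y)=\tfrac{1}{2\pi}\iint m(\xi,\eta)\,e^{ix\xi-iy\eta}\,d\xi\,d\eta,
\]
i.e.\ $m$ is essentially a 2D Fourier transform of $K$ up to a sign flip. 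All four assertions then follow from this identity together with the Plancherel theorem and the distributional identity $\int e^{ix(\xi-\eta)}\,dx=2\pi\,\delta(\xi-\eta)$, with the numerical prefactors being a routine bookkeeping exercise in the $(2\pi)^{-1/2}$-normalized convention fixed in the notations.

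For the adjoint I would compute $\langle Af,g\rangle_{L^2}=\langle\widehat{Af},\hat g\rangle_{L^2}$ via Parseval, substitute the defining formula for $m$, interchange the order of integration, and then read off the claimed kernel of $A^{*}$ after relabeling variables. For the trace formula I would invoke Definition \ref{tr} in the form $\mathrm{tr}(A)=\int K(x,x)\,dx$, plug in the kernel representation above, and collapse the $x$-integration via the delta distribution, which leaves a single integral over the diagonal of $m$. The Hilbert--Schmidt identity $\|A\|_{\mathfrak I_2}^{2}=\mathrm{tr}(AA^{*})$ I would handle in two equivalent ways: either directly through $\|A\|_{\mathfrak I_2}^{2}=\iint|K(x,y)|^{2}\,dx\,dy$ together with a two-variable Plancherel, or by composing $A$ with the adjoint formula just derived and invoking the trace formula once more; the two derivations serve as a cross-check on constants.

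The composition formula \eqref{trace} I would prove by induction on $n$. The decisive lemma is that two operators $A_{1},A_{2}$ with Fourier kernels $m_{1},m_{2}$ compose to an operator whose Fourier kernel is the product kernel $\int m_{1}(\xi,\zeta)\,m_{2}(\zeta,\eta)\,d\zeta$. This drops out of applying the defining formula for $A_{1}$ to $A_{2}f$ and exchanging integrations; the exchange is legitimate under the Hilbert--Schmidt assumption by Fubini, since $m_{1},m_{2}\in L^{2}(\mathbb{R}^{2})$ implies their product kernel is the kernel of a bounded operator. Iterating $n-1$ times expresses the Fourier kernel of $A_{1}\cdots A_{n}$ as an iterated integral over $\xi_{2},\ldots,\xi_{n}$, after which applying the trace formula already proved and relabeling $\xi=\xi_{1}$ produces \eqref{trace}, whose single $(2\pi)^{-1}$ comes from the trace identity rather than from the repeated composition (the intermediate composition integrations carry no factors).

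The proof contains no real obstacle beyond careful normalization. The only subtlety worth flagging is legitimizing Fubini and the $\delta$-collapse: this is precisely why the statement presupposes Hilbert--Schmidt operators, since $m\in L^{2}(\mathbb{R}^{2})$ makes the physical kernel $K$ lie in $L^{2}(\mathbb{R}^{2})$ as well, and the formal manipulations above can be justified by first working with dense Schwartz-class kernels and then extending by continuity in the Hilbert--Schmidt norm. Beyond this, the proposition is entirely formal once the kernel dictionary above is in place.
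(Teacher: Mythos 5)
The paper does not prove Proposition~\ref{lemchen}; it imports it from \cite{Chen23}. So there is no in-paper argument to compare against, and your task is necessarily a reconstruction. Your overall strategy is the natural and correct one: pass to the physical kernel via $K(x,y)=\tfrac{1}{2\pi}\iint m(\xi,\eta)\,e^{ix\xi-iy\eta}\,d\xi\,d\eta$, read off the adjoint from Parseval, collapse $\int K(x,x)\,dx$ with $\int e^{ix(\xi-\eta)}dx=2\pi\delta(\xi-\eta)$, use two-variable Plancherel for the Hilbert--Schmidt norm, and prove \eqref{trace} by showing that $A_1A_2$ has Fourier kernel $\int m_1(\xi,\zeta)m_2(\zeta,\eta)\,d\zeta$ and iterating. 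All of this is sound, and the density/limiting remark to justify Fubini and the $\delta$-collapse for Hilbert--Schmidt kernels is the right thing to say.

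There is one genuine gap, and it sits precisely in the place you declare to be ``routine bookkeeping.'' Under the paper's stated normalization $\hat f(\xi)=(2\pi)^{-1/2}\int f(x)e^{-ix\xi}\,dx$ and with $m$ defined by $\widehat{Af}(\xi)=\int m(\xi,\eta)\hat f(\eta)\,d\eta$, carrying your own displayed kernel dictionary through the $\delta$-collapse gives
\[
\mathrm{tr}(A)=\frac{1}{2\pi}\iint m(\xi,\eta)\,2\pi\,\delta(\xi-\eta)\,d\xi\,d\eta=\int m(\xi,\xi)\,d\xi,
\]
and two-variable Plancherel gives $\|A\|_{\mathfrak I_2}^{2}=\iint|m(\xi,\eta)|^{2}\,d\xi\,d\eta$, and likewise $\mathrm{tr}(A_1\cdots A_n)=\int_{\mathbb R^n}m_1(\xi_1,\xi_2)\cdots m_n(\xi_n,\xi_1)\,d\vec\xi$ \emph{without} the extra $\tfrac{1}{2\pi}$ that appears in the stated proposition. (A quick sanity check: for a rank-one projection $A=\langle\cdot,e\rangle e$ with $\|e\|_{L^2}=1$ one finds $m(\xi,\eta)=\hat e(\xi)\overline{\hat e(\eta)}$ and hence $\int m(\xi,\xi)\,d\xi=\|\hat e\|_{L^2}^2=1=\mathrm{tr}(A)$, again with no $\tfrac{1}{2\pi}$.) So the ``bookkeeping'' does not actually reproduce the stated constants; most likely \cite{Chen23} uses a different normalization for either the Fourier transform or the symbol $m$, and the prefactors were carried over verbatim. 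This mismatch is harmless for the way the proposition is used downstream (the applications in Lemma~\ref{le1} are internally consistent once the symbol of a multiplication operator is computed from the same convention), but a correct proof must either produce the constants as stated under a consistent convention, or note and correct the discrepancy; asserting that the prefactors ``work out'' without checking them is precisely the step that would fail here.
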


The following known results about $U^p$ and $V^p$ can be found in \cite{Gu16,HaHeKo09,KoTa05,KoTa12}.
\begin{prop} \label{UVprop1}
{\rm (Embedding)} Let $1\leq p <q <\infty$. We have the following results.
\begin{itemize}
 \item[\rm (i)]  $U^p$ and $V^p$, $V^p_{rc}$, $V^p_{-}$, $V^p_{rc, -}$ are Banach spaces.

\item[\rm (ii)] $U^p\subset V^p_{rc, -} \subset U^q \subset L^\infty (\mathbb{R}, L^2)$. Every $u\in U^p$ is right continuous on $t\in \mathbb{R}$.

\item[\rm (iii)] $V^p \subset V^q$,   $V^p_{-} \subset V^q_{-} $,   $V^p_{rc} \subset V^q_{rc} $,  $V^p_{rc, -} \subset V^q_{rc, -} $.

\item[\rm (iv)] $\dot X^{0, 1/2, 1} \subset U^2_{S} \subset V^2_{S} \subset \dot X^{0, 1/2, \infty}$.
\end{itemize}
\end{prop}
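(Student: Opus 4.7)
The plan is to verify each of the four assertions in turn, drawing on the atomic structure of $U^p$ and elementary properties of functions of bounded $p$-variation; all of this is essentially classical (cf. Koch--Tataru and Hadac--Herr--Koch) so I would keep the exposition schematic.

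For (i), the $V^p$ seminorm dominates $2\|v\|_{L^\infty(\mathbb{R};L^2)}$ via a two-point partition, so a Cauchy sequence in $V^p$ has a pointwise limit whose $p$-variation is controlled by the liminf of the norms; this yields completeness of $V^p$, and restricting to the subspaces where $v$ vanishes at $-\infty$ or is right-continuous preserves the property since both conditions pass to pointwise limits. For $U^p$, completeness is a consequence of the atomic definition: any absolutely summable series $\sum_j c_j a_j$ of atoms converges in $L^\infty(\mathbb{R};L^2)$, and the infimum over such representations is easily checked to be a norm that makes $U^p$ complete.

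For (ii), the first inclusion $U^p\subset V^p_{rc,-}$ is checked on a generic atom $a=\sum_k\chi_{[t_{k-1},t_k)}\phi_{k-1}$: such an $a$ is right continuous, vanishes for $t<t_0$, and one has $\|a\|_{V^p}^p\leq 2^p\sum_k\|\phi_{k-1}\|_2^p\leq 2^p$. The last inclusion $U^q\subset L^\infty(\mathbb{R};L^2)$ is immediate from $\|a\|_{L^\infty}\leq 1$ for atoms. The key step is the middle inclusion $V^p_{rc,-}\subset U^q$: given $v\in V^p_{rc,-}$, I would inductively choose partitions $\{t_k^{(n)}\}$ whose refinements capture the $p$-variation up to error $2^{-n}$ and let $v_n$ be the step function with $v_n=v(t_{k-1}^{(n)})$ on $[t_{k-1}^{(n)},t_k^{(n)})$. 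Each difference $v_n-v_{n-1}$ is a single $U^q$-atom of norm bounded by $(\sum_k\|v(t_k^{(n)})-v(t_{k-1}^{(n)})\|_2^q)^{1/q}$, which by $\ell^p\hookrightarrow\ell^q$ is controlled by a geometric fraction of $\|v\|_{V^p}$; summing the telescope gives $\|v\|_{U^q}\lesssim \|v\|_{V^p}$ and right continuity together with the $-\infty$ condition let $v$ be represented as such a series of atoms.

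Part (iii) reduces to the numerical fact $(\sum_k a_k^q)^{1/q}\leq(\sum_k a_k^p)^{1/p}$ for $p\leq q$ once the individual summands $a_k=\|v(t_k)-v(t_{k-1})\|_2$ are normalized by the supremum, giving the stated monotonicity of $V^p$ norms; the argument respects each of the closed subspaces. Part (iv) is handled after conjugating by $e^{it\partial_x^4}$, which reduces matters to $\dot X^{0,1/2,1}\subset U^2\subset V^2\subset\dot X^{0,1/2,\infty}$, where the middle embedding is a special case of (ii). For the left embedding, I would Littlewood--Paley decompose in the modulation variable $\tau$ and use Plancherel to see $\|u_j\|_{U^2}\lesssim 2^{j/2}\|u_j\|_{L^2_{t,x}}$ for a piece localized at $|\tau|\sim 2^j$, then sum in $j$. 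For the right embedding, for any partition I would write $v(t_k)-v(t_{k-1})=\int(e^{it_k\tau}-e^{it_{k-1}\tau})\widehat{v}(\tau,\cdot)\,d\tau$, estimate $|e^{it_k\tau}-e^{it_{k-1}\tau}|^2\lesssim \min(1,(t_k-t_{k-1})^2\tau^2)$, and sum in $k$ using Plancherel to obtain a bound by $\sup_j 2^{j}\|\chi_{|\tau|\sim 2^j}\widehat{v}\|_{L^2}^2$.

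The main obstacle is the non-trivial embedding $V^p_{rc,-}\subset U^q$ in (ii); all of the other items are either applications of $\ell^p\hookrightarrow\ell^q$, direct inspection of atoms, or standard Plancherel arguments on the Fourier side of the time variable.
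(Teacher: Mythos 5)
The paper does not actually prove Proposition \ref{UVprop1}; it cites \cite{Gu16,HaHeKo09,KoTa05,KoTa12} and leaves the proofs to those references. So there is no in-paper argument to compare against, and your proposal must be judged on its own.

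Parts (i), (ii), (iii), and the left-hand embedding in (iv) follow the standard route (these are essentially the arguments in Hadac--Herr--Koch). I will only flag that your claim that $v_n-v_{n-1}$ is a single $U^q$-atom with norm $\bigl(\sum_k\|v(t_k^{(n)})-v(t_{k-1}^{(n)})\|^q\bigr)^{1/q}$ conflates the jumps of the step function $v_n-v_{n-1}$ with the increments of $v$ on the $n$-th partition; the actual construction is more delicate than this sketch suggests. Still, the plan is the right one and you correctly identify $V^p_{rc,-}\subset U^q$ as the hard step.

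The genuine gap is in the right-hand embedding of (iv). You want $V^2_S\subset \dot X^{0,1/2,\infty}$, i.e.\ $\|v\|_{\dot X^{0,1/2,\infty}}\lesssim \|v\|_{V^2_S}$. What you propose -- expanding $v(t_k)-v(t_{k-1})$ by Fourier inversion, estimating $|e^{it_k\tau}-e^{it_{k-1}\tau}|^2\lesssim\min\bigl(1,(t_k-t_{k-1})^2\tau^2\bigr)$, and summing in $k$ to land on $\sup_j 2^j\|\chi_{|\tau|\sim 2^j}\widehat v\|_{L^2}^2$ -- is bounding the $V^2$-type partition sums \emph{in terms of} a Bourgain-space quantity. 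That is the opposite inclusion. Moreover the estimate itself cannot close: for a fixed $\tau$, the quantity $\sum_k\min(1,(t_k-t_{k-1})^2\tau^2)$ is not bounded uniformly over partitions of $\mathbb{R}$ (short subintervals contribute $\sum_k(t_k-t_{k-1})^2\tau^2$ with $\sum_k(t_k-t_{k-1})=\infty$, long ones contribute $1$ each), and even the cleaned-up version of this approach produces the $\ell^1_j$ quantity $\|v\|_{\dot X^{0,1/2,1}}$ on the right, not the $\ell^\infty_j$ one. The correct argument runs through a modulus-of-continuity estimate: by tiling $\mathbb{R}$ by intervals of length $h$ and applying the definition of the $V^2$-norm on the translated partitions, one gets $\|v(\cdot+h)-v\|_{L^2_tL^2_x}\lesssim |h|^{1/2}\|v\|_{V^2}$; Plancherel in $t$ then gives $\|(e^{ih\tau}-1)\widehat v\|_{L^2}\lesssim |h|^{1/2}\|v\|_{V^2}$, and choosing $h\sim 2^{-j}$ so that $|e^{ih\tau}-1|\gtrsim 1$ on $|\tau|\sim 2^j$ yields $2^{j/2}\|\chi_{|\tau|\sim 2^j}\widehat v\|_{L^2}\lesssim \|v\|_{V^2}$ uniformly in $j$, which is the desired inclusion.
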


\begin{prop} \label{UVprop2}
{\rm (Interpolation)} Let $1\leq p <q <\infty$.  There exists a positive constant $\epsilon(p,q)>0$, such that for any $u\in V^p $ and $M>1$,  there exists a decomposition $u=u_1+u_2$ satisfying
\begin{align}
 \frac{1}{M} \|u_1\|_{U^p} + e^{\epsilon M}  \|u_2\|_{U^q} \lesssim \|u\|_{V^p}. \label{interp}
\end{align}
\end{prop}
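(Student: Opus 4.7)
My plan is to apply the classical dyadic greedy stopping-time decomposition that underlies the embedding $V^p \hookrightarrow U^q$ for $p<q$. By homogeneity I normalize $\|u\|_{V^p}=1$, and after standard reductions I may assume $u \in V^p_-$ (so that the atom normalization $\phi_0=0$ causes no trouble). At each dyadic scale $2^{-n}$ ($n \geq 0$) I greedily approximate $u$ by stopping times $t^{(n)}_0 := -\infty$ and
\begin{equation*}
t^{(n)}_{j+1} := \inf\bigl\{t > t^{(n)}_j : \|u(t)-u(t^{(n)}_j)\|_{L^2} \geq 2^{-n}\bigr\},
\end{equation*}
setting $v_n(t) := u(t^{(n)}_j)$ for $t \in [t^{(n)}_j, t^{(n)}_{j+1})$. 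The definition of $V^p$ forces the number $N_n$ of stopping times at level $n$ to satisfy $N_n \leq 2^{np}$, and the greedy construction yields $\|u - v_n\|_{L^\infty_t L^2} \leq 2^{-n}$, so $v_n \to u$ uniformly.

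Next, telescope as $u = v_0 + \sum_{n \geq 1} w_n$ with $w_n := v_n - v_{n-1}$. Each $w_n$ is a step function adapted to the common refinement of the level-$n$ and level-$(n-1)$ partitions; it has at most $N_n + N_{n-1} \lesssim 2^{np}$ pieces, and its value on each piece is bounded in $L^2$ by $\|u-v_n\|_{L^\infty_t L^2} + \|u-v_{n-1}\|_{L^\infty_t L^2} \leq 3 \cdot 2^{-n}$. Hence $w_n$, regarded as a $U^p$-atom up to a normalizing constant, satisfies
\begin{equation*}
\|w_n\|_{U^p} \lesssim (N_n \cdot 2^{-np})^{1/p} \lesssim 1, \qquad \|w_n\|_{U^q} \lesssim N_n^{1/q} \cdot 2^{-n} \lesssim 2^{-n(1-p/q)},
\end{equation*}
so the $U^q$ norms decay geometrically since $p < q$.

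Given $M>1$, set $k_0 := \lfloor c_0 M \rfloor$ for a small constant $c_0 = c_0(p,q)$, and decompose
\begin{equation*}
u_1 := v_{k_0} = v_0 + \sum_{n=1}^{k_0} w_n, \qquad u_2 := u - v_{k_0} = \sum_{n > k_0} w_n.
\end{equation*}
Summing the bounds above yields $\|u_1\|_{U^p} \lesssim k_0 \lesssim M$ and
\begin{equation*}
\|u_2\|_{U^q} \leq \sum_{n > k_0} \|w_n\|_{U^q} \lesssim \sum_{n > k_0} 2^{-n(1-p/q)} \lesssim 2^{-k_0(1-p/q)} \lesssim e^{-\epsilon M}
\end{equation*}
with $\epsilon := c_0(1-p/q)\log 2$, which rearranges into \eqref{interp} after renaming constants.

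The main technical obstacle is the careful verification that $w_n$ admits the claimed $U^p$-atom structure with the correct $\ell^p$-normalization of its pieces: one must work with the common refinement of the level-$n$ and level-$(n-1)$ partitions and combine the triangle inequality with the uniform bound $\|u-v_n\|_{L^\infty_t L^2} \leq 2^{-n}$ to control each piecewise $L^2$ value of $w_n$ by $O(2^{-n})$, which together with $N_n \leq 2^{np}$ forces the $O(1)$ bound in $U^p$ and the geometric $O(2^{-n(1-p/q)})$ bound in $U^q$. Secondary issues---well-definedness of the greedy stopping times (inherited right continuity in $V^p_{rc}$) and the reduction to $V^p_-$---are standard and handled via the density arguments in Koch--Tataru.
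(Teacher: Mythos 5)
Proposition 2.7 is stated in the paper without proof and is attributed to the cited references \cite{Gu16,HaHeKo09,KoTa05,KoTa12}; the greedy dyadic stopping-time construction you give, with the telescoping $u=v_0+\sum w_n$, the bounds $\|w_n\|_{U^p}\lesssim 1$, $\|w_n\|_{U^q}\lesssim 2^{-n(1-p/q)}$, and the cut at level $k_0\sim M$, is precisely the argument in Hadac--Herr--Koch and is correct. The one point worth making explicit is the one you already flag: the statement is really for $u\in V^p_{-,rc}$ rather than all of $V^p$ (a constant function lies in $V^p$ but cannot be written as $u_1+u_2$ with $u_1\in U^p$, $u_2\in U^q$, since such functions vanish as $t\to-\infty$), so the ``reduction'' is a correction of the hypothesis rather than a density argument; this is a standard imprecision in the literature and the paper inherits it.
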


\begin{prop} \label{orthogonality}{\rm(Orthogonality in $V^2$)} Take an interval $I\subset \mathbb{R}$, then for $u\in V^2$ the following orthogonality holds:
\begin{align}
\|u_I\|_{V^2}\leq \bigg(\sum_{\lambda\in I\cap\mathbb{Z}} \| u_\lambda\|_{V^2}^2\bigg)^{1/2}.
\end{align}
\end{prop}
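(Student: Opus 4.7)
The plan is to unwind the definition of the $V^2$-norm as a supremum over finite partitions and combine it with Plancherel almost-orthogonality. I will start with an arbitrary partition $\{t_k\}_{k=0}^K \in \mathcal{Z}$ and write the increments of $u_I$ as
$$u_I(t_k) - u_I(t_{k-1}) = \sum_{\lambda \in I \cap \mathbb{Z}} \bigl(u_\lambda(t_k) - u_\lambda(t_{k-1})\bigr).$$
The key observation is that each summand on the right is frequency-localized to $[\lambda-1,\lambda+1]$, since $\Box_\lambda = \FF \psi_\lambda \F$ with $\mathrm{supp}\,\psi_\lambda \subset [\lambda-1,\lambda+1]$. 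Because any $\xi \in \mathbb{R}$ belongs to at most a uniformly bounded number of such intervals, Plancherel combined with Cauchy--Schwarz pointwise in frequency yields an almost-orthogonality bound
$$\bigl\|u_I(t_k) - u_I(t_{k-1})\bigr\|_{L^2}^2 \lesssim \sum_{\lambda \in I \cap \mathbb{Z}} \bigl\|u_\lambda(t_k) - u_\lambda(t_{k-1})\bigr\|_{L^2}^2.$$

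Next I would sum this inequality in $k$ from $1$ to $K$ and interchange the two (finite) sums. For each fixed $\lambda$, the same partition $\{t_k\}$ is an admissible candidate in the supremum defining $\|u_\lambda\|_{V^2}$, so
$$\sum_{k=1}^K \bigl\|u_\lambda(t_k) - u_\lambda(t_{k-1})\bigr\|_{L^2}^2 \leq \|u_\lambda\|_{V^2}^2.$$
Summing in $\lambda \in I \cap \mathbb{Z}$ gives a bound that is independent of the chosen partition. Taking the supremum over all partitions on the left-hand side and extracting the square root then produces the desired inequality.

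The only subtle point is the implicit universal constant that the symbol $\leq$ absorbs on account of the overlap between neighboring $\psi_n$; apart from this, the argument is a direct consequence of Plancherel's theorem and the bounded-overlap property of the frequency supports $\{[\lambda-1,\lambda+1]\}_{\lambda \in \mathbb{Z}}$. I do not anticipate any real obstacle: the scheme is formally identical to the standard Littlewood--Paley almost-orthogonality used in the definition of modulation and Besov-type spaces, and the fact that $V^2$ is built on the Hilbert space $L^2$ via partition-wise differences means the almost-orthogonality lifts from $L^2$ to $V^2$ without any additional input.
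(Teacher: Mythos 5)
Your argument is the standard one for this almost-orthogonality fact, and it is correct: unwind the $V^2$ supremum over partitions, write each increment of $u_I$ as the sum of the increments of $u_\lambda$, invoke bounded overlap of the Fourier supports $[\lambda-1,\lambda+1]$ of the $\psi_\lambda$ together with Plancherel and Cauchy--Schwarz to get
\begin{align*}
\|u_I(t_k)-u_I(t_{k-1})\|_{L^2}^2 \lesssim \sum_{\lambda\in I\cap\mathbb{Z}} \|u_\lambda(t_k)-u_\lambda(t_{k-1})\|_{L^2}^2,
\end{align*}
then sum in $k$ (the same partition is admissible in the supremum defining each $\|u_\lambda\|_{V^2}$) and take the supremum over partitions. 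The paper itself states this proposition without proof, so there is nothing to compare against step by step; your proof supplies the expected argument. The one point you correctly flag is real: because the Schwartz cut-offs $\psi_\lambda$ overlap (each $\xi$ lies in the support of at most three $\psi_\lambda$), the pointwise Cauchy--Schwarz step genuinely produces an absolute constant, so the conclusion is $\|u_I\|_{V^2}\lesssim \bigl(\sum_\lambda \|u_\lambda\|_{V^2}^2\bigr)^{1/2}$ rather than the literal $\leq$ printed in the statement; this is consistent with how the paper actually uses the result (e.g.\ Lemma \ref{V2toX} writes $\lesssim$). To obtain a clean $\leq$ with constant $1$ one would need disjoint frequency blocks or a square-function partition ($\sum_n|\psi_n|^2\equiv 1$), neither of which the paper's set-up provides, so treating the $\leq$ as shorthand for $\lesssim$ is the right reading.
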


\begin{prop} \label{UVprop3}
{\rm (Duality)} Let $1\leq p   <\infty$, $1/p+1/p'=1$.  Then $(U^p)^* = V^{p'}$ in the sense that
\begin{align}
T: V^{p'}  \to (U^p)^*; \ \ T(v)=B(\cdot,v), \label{dual}
\end{align}
is an isometric mapping.  The bilinear form $B: U^p\times V^{p'}$ is defined as follows: For a partition $\mathrm{t}:= \{t_k\}^K_{k=0} \in \mathcal{Z}$, we define
 \begin{align}
B_{\mathrm{t}} (u,v) = \sum^K_{k=1} \langle u(t_{k-1}), \ v(t_k)-v(t_{k-1})\rangle. \label{dual2}
\end{align}
Here $\langle \cdot, \cdot \rangle$ denotes the inner product on $L^2$. For any $u\in U^p$, $v\in V^{p'}$, there exists a unique number $B(u,v)$ satisfying the following property. For any $\varepsilon>0$, there exists a partition $\mathrm{t}$ such that
$$
|B(u,v)- B_{\mathrm{t}'} (u,v)| <\varepsilon, \ \ \forall \  \mathrm{t}'    \supset \mathrm{t}.
$$
Moreover,
$$
|B(u,v)| \leq \|u\|_{U^p} \|v\|_{V^{p'}}.
$$
In particular, let $u\in V^1_{-}$ be absolutely continuous on compact interval, then for any $v\in V^{p'}$,
$$
 B(u,v) =\int \langle u'(t), v(t)\rangle dt.
$$
\end{prop}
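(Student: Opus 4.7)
The plan is to define $B(u,v)$ first on $U^p$-atoms via the explicit partition sum $B_{\mathrm{t}}(u,v)$, use Hölder to conclude that $T:V^{p'}\to (U^p)^*$ is bounded, and then establish isometry and surjectivity as separate steps. Start with a $U^p$-atom $a=\sum_{k=1}^K\chi_{[t_{k-1},t_k)}\phi_{k-1}$ attached to its own partition $\mathrm{t}_a$, and take any refinement $\mathrm{t}\supset\mathrm{t}_a$. Grouping the extra points of $\mathrm{t}$ inside each $[t_{k-1},t_k)$ and telescoping the $v$-sum collapses $B_{\mathrm{t}}(a,v)$ to
\begin{align*}
B_{\mathrm{t}}(a,v)=\sum_{k=1}^{K}\langle \phi_{k-1}, v(t_k)-v(t_{k-1})\rangle,
\end{align*}
independent of the refinement. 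Cauchy--Schwarz in $L^2$ followed by Hölder's inequality with exponents $p,p'$ in the $k$-sum then gives $|B_{\mathrm{t}}(a,v)|\leq \|v\|_{V^{p'}}$.

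For general $u=\sum_j c_j a_j\in U^p$, setting $B(u,v):=\sum_j c_j B(a_j,v)$ produces an absolutely convergent definition with $|B(u,v)|\leq \|u\|_{U^p}\|v\|_{V^{p'}}$. The approximation assertion is then proved by truncating the atomic series to finitely many $j$'s with tail bounded by $\varepsilon/(2\|v\|_{V^{p'}})$ and choosing $\mathrm{t}$ to refine all the retained atoms' partitions. This already yields $\|T(v)\|_{(U^p)^*}\leq\|v\|_{V^{p'}}$. For the reverse inequality, fix $\varepsilon>0$, pick a partition $\mathrm{t}^*$ nearly achieving $\|v\|_{V^{p'}}$, set $\Delta_k v=v(t_k^*)-v(t_{k-1}^*)$, and choose $\phi_{k-1}$ proportional to $\|\Delta_k v\|_{L^2}^{p'-2}\Delta_k v$ with the normalization $\sum\|\phi_{k-1}\|_{L^2}^p=1$; the resulting $U^p$-atom $a$ satisfies $B(a,v)\approx \|v\|_{V^{p'}}$ by the $\ell^p$--$\ell^{p'}$ duality on the sequence $(\|\Delta_k v\|_{L^2})_k$.

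The main obstacle is the surjectivity of $T$. Given $L\in(U^p)^*$, I construct a candidate $v(t)\in L^2$ by the rule $\langle v(t),\phi\rangle:=-L(\chi_{[t,\infty)}\phi)$, subject to the boundary conventions $v(\infty)=0$ and $v(-\infty)=\lim_{t\to -\infty}v(t)$. Verifying $\|v\|_{V^{p'}}\leq\|L\|_{(U^p)^*}$ on an arbitrary partition $\{t_k\}$ requires dualizing each $v(t_k)-v(t_{k-1})$ through a unit $L^2$-vector $\phi_k$, assembling the step function $\sum_k \chi_{[t_{k-1},t_k)}\phi_k$ with a coefficient making it into a $U^p$-atom, and identifying $L$ applied to this atom with the desired $\ell^{p'}$-norm of $(\|v(t_k)-v(t_{k-1})\|_{L^2})_k$ via $\ell^p$--$\ell^{p'}$ duality; right-continuity and membership in $V^{p'}$ then follow from general properties of this construction. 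Finally, for $u\in V^1_-$ absolutely continuous on compact intervals, Abel summation applied to the defining sum, together with the vanishing of the boundary values (from $u(\pm\infty)=0$ and $v(\infty)=0$), rewrites
\begin{align*}
B_{\mathrm{t}}(u,v)=-\sum_{k=1}^{K-1}\int_{t_{k-1}}^{t_k}\langle u'(s),v(t_k)\rangle\,ds
\end{align*}
up to a sign fixed by the conventions; since $u'\in L^1_tL^2$ and $V^{p'}\hookrightarrow L^\infty_tL^2$, dominated convergence as the mesh tends to zero gives the integral representation $B(u,v)=\int\langle u'(s),v(s)\rangle\,ds$.
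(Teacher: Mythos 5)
The paper itself does not prove this proposition: it appears under ``Known results'' with a blanket citation to Hadac--Herr--Koch and Koch--Tataru, so there is no in-paper proof to compare against. Your outline is the standard one from that literature (telescope on a refinement, H\"older for the upper bound, a near-extremal atom for the lower bound, the flag atoms $\chi_{[t,\infty)}\phi$ for surjectivity, Abel summation for the integral identity), and it is the right architecture. As written, however, three steps do not close.

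First, the approximation claim requires the uniform bound $|B_{\mathrm{t}'}(a,v)|\le\|v\|_{V^{p'}}$ for an \emph{arbitrary} partition $\mathrm{t}'$, not only for $\mathrm{t}'\supset\mathrm{t}_a$. When you truncate $u=\sum_j c_ja_j$ at $j\le J$ and pick $\mathrm{t}$ refining only those $J$ partitions, the tail atoms $a_j$, $j>J$, are evaluated on partitions $\mathrm{t}'\supset\mathrm{t}$ that do not refine $\mathrm{t}_{a_j}$. The bound does still hold (group the $t'_{k-1}$ by the interval of $\mathrm{t}_a$ they fall in, telescope within each group, and observe the resulting coarsened sum is again of H\"older type over a subcollection of the $\phi_i$), but this coarsening lemma is missing and you only prove the refinement case. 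Second, the atom convention $\phi_0=0$ means the first increment $v(t_1)-v(-\infty)$ is never tested: in the lower bound your choice $\phi_{k-1}\propto\|\Delta_k v\|^{p'-2}\Delta_k v$ is forced to $0$ for $k=1$, and in the surjectivity argument $\chi_{(-\infty,t_1)}\phi\notin U^p$. This is repaired by inserting a very negative partition point $t^*_{1/2}$ with $\|v(t^*_{1/2})-v(-\infty)\|$ small (this is precisely why $V^{p'}$ functions are required to have a limit at $-\infty$), but that limiting step is absent and must be stated.

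Third, your Abel summation correctly gives $B_{\mathrm{t}}(u,v)=-\sum_{k}\int_{t_{k-1}}^{t_k}\langle u'(s),v(t_k)\rangle\,ds$, whose mesh-to-zero limit is $-\int\langle u'(s),v(s)\rangle\,ds$, agreeing with Hadac--Herr--Koch Proposition 2.10 which carries the minus sign. You then assert the conclusion with a plus sign, invoking ``a sign fixed by the conventions,'' but nothing in the stated conventions flips it (the boundary terms vanish because $u(-\infty)=0$ and $v(\infty)=0$, not because of a sign). The identity as printed in the paper, without the minus sign, is evidently a transcription slip from the reference; your derivation is right and your stated conclusion is wrong, so keep the minus sign or flag the discrepancy.
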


\begin{prop}{\rm \cite{GRW21}} \label{UVprop4} {\rm (Duality)}
 Let $1\leq q   <\infty$.  Then $(X^s_q)^* = Y^{-s}_{q'} $ in the sense that
\begin{align}
T: Y^{-s}_{q'}   \to (X^s_q)^* ; \ \ T(v)=B(\cdot,v), \label{dualprop4}
\end{align}
is an isometric mapping, where the bilinear form $B(\cdot,\cdot)$ is defined in  Proposition \ref{UVprop3}. Moreover, we have
$$
|B(u,v)| \leq   \|u\|_{X^s_q} \|v\|_{Y^{-s}_{q'}}.
$$
\end{prop}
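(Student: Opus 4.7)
The plan is to reduce the duality $(X^s_q)^* = Y^{-s}_{q'}$ to two ingredients already in our toolkit: the fiberwise duality $(U^2)^* = V^2$ from Proposition \ref{UVprop3}, and $\ell^q$--$\ell^{q'}$ duality with the weight $\langle\lambda\rangle^s$. The bridge is the frequency decomposition $u = \sum_{\lambda\in\mathbb{Z}} \Box_\lambda u$. Two structural facts make everything fit together: first, $\psi_\lambda$ is supported in $[\lambda-1,\lambda+1]$, so $\Box_\lambda \Box_\mu = 0$ unless $|\lambda-\mu|\leq 1$; second, taking $\psi$ real renders $\Box_\lambda$ self-adjoint on $L^2$, and since $B(\cdot,\cdot)$ is built from $L^2$ inner products along partitions of time (cf.\ Proposition \ref{UVprop3}), this yields the commutation $B(\Box_\lambda u, v) = B(u, \Box_\lambda v)$ as well as the multiplier bounds $\|\Box_\lambda\|_{U^2\to U^2},\ \|\Box_\lambda\|_{V^2\to V^2}\lesssim 1$.

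For continuity of $T$, I would first extend $B$ to $X^s_q \times Y^{-s}_{q'}$ via the absolutely convergent three-band-diagonal sum
$$B(u,v) := \sum_{\substack{\lambda,\mu\in\mathbb{Z} \\ |\lambda-\mu|\leq 1}} B(\Box_\lambda u, \Box_\mu v),$$
apply Proposition \ref{UVprop3} fiberwise to bound each term by $\|\Box_\lambda u\|_{U^2}\|\Box_\mu v\|_{V^2}$, and close with weighted H\"older in $\lambda$ (using $\langle\mu\rangle\sim\langle\lambda\rangle$ on the diagonal band) to obtain $|B(u,v)| \lesssim \|u\|_{X^s_q}\|v\|_{Y^{-s}_{q'}}$.

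For surjectivity, view $J: X^s_q \to W$, $u \mapsto (\Box_\lambda u)_\lambda$, as an isometric embedding into the weighted $\ell^q$-direct sum $W := \ell^q(\mathbb{Z}, \langle\lambda\rangle^s; U^2)$. Given $L \in (X^s_q)^*$, Hahn--Banach extends $L\circ J^{-1}$ to $\tilde L\in W^*$ with $\|\tilde L\|=\|L\|$; by fiberwise $(U^2)^* = V^2$ combined with weighted $\ell^q$--$\ell^{q'}$ duality, $\tilde L$ is represented by a sequence $(v_\lambda)_\lambda$ satisfying $\sum_\lambda \langle\lambda\rangle^{-sq'}\|v_\lambda\|_{V^2}^{q'} = \|L\|^{q'}$ and $L(u) = \sum_\lambda B(\Box_\lambda u, v_\lambda)$. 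I would then set $v := \sum_\lambda \Box_\lambda v_\lambda$, convergent in $\mathcal{S}'$ since each frequency lies in only $O(1)$ summands. Self-adjointness gives $B(\Box_\lambda u, v_\lambda) = B(u, \Box_\lambda v_\lambda)$; summing and identifying with the double-sum definition of $B(u,v)$ yields $L(u) = T(v)(u)$. For the norm control, $\Box_\mu v = \sum_{|\lambda-\mu|\leq 1} \Box_\mu \Box_\lambda v_\lambda$ gives $\|\Box_\mu v\|_{V^2} \lesssim \sum_{|\lambda-\mu|\leq 1}\|v_\lambda\|_{V^2}$, and a weighted $\ell^{q'}$ summation (with $\langle\mu\rangle\sim\langle\lambda\rangle$) yields $\|v\|_{Y^{-s}_{q'}} \lesssim \|L\|$.

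The main obstacle is bookkeeping around the non-orthogonality of neighboring $\Box_\lambda$: the overlap between $\psi_\lambda$ and $\psi_{\lambda\pm 1}$ forces three-band-diagonal sums throughout and turns what would be an isometry into an isomorphism of equivalent norms (which is all that is required for the subsequent analysis). A secondary technical point is the interchange of $B(u,\cdot)$ with the series defining $v$; I would settle this by working exclusively with the frequency-localised pieces $\Box_\lambda v_\lambda \in V^2$ and the fattened operator $\tilde{\Box}_\lambda = \sum_{|\mu-\lambda|\leq 1}\Box_\mu$ (so that $\Box_\lambda\tilde{\Box}_\lambda = \Box_\lambda$), never needing $v$ itself to lie in any single $U^p$ or $V^p$ space.
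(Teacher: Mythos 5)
The paper does not prove this proposition: it is quoted verbatim from \cite{GRW21}, so there is no in-paper argument to compare against. Your plan is, however, essentially the standard route for this kind of frequency-block duality and is almost certainly what the cited reference does: the block decomposition $u=\sum_\lambda\Box_\lambda u$, the near-diagonality $\Box_\lambda\Box_\mu=0$ for $|\lambda-\mu|>1$, the self-adjointness of the real multiplier $\Box_\lambda$ to move it across the pairing of Proposition \ref{UVprop3}, the identity $\Box_\lambda\tilde\Box_\lambda=\Box_\lambda$ coming from $\sum_n\psi_n\equiv1$, and then weighted $\ell^q$--$\ell^{q'}$ duality on top of the fiberwise $(U^2)^*=V^2$. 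Your handling of the two real subtleties is correct: the form on $X^s_q\times Y^{-s}_{q'}$ has to be \emph{defined} through the three-band-diagonal sum (since generic $u\in X^s_q$ need not lie in $U^2$, nor $v\in Y^{-s}_{q'}$ in $V^2$), and the convergence of $B(u,\cdot)$ against the series for $v$ is finessed by always pairing $\Box_\lambda u$ against the $V^2$-element $\tilde\Box_\lambda v$ rather than against $v$ itself.

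One small clarification worth recording. Your argument yields an isomorphism with equivalent norms, not an exact isometry: the overlaps between $\Box_\lambda$ and $\Box_{\lambda\pm1}$, and the comparison $\langle\lambda\rangle^{-s}\lesssim\langle\lambda\pm1\rangle^{-s}$, each lose multiplicative constants, so the conclusion is $|B(u,v)|\lesssim\|u\|_{X^s_q}\|v\|_{Y^{-s}_{q'}}$ rather than the bare ``$\leq$'' appearing in the statement, and likewise $T$ is only a bi-Lipschitz isomorphism onto $(X^s_q)^*$. You flagged this yourself and correctly observed that norm-equivalence is all that is used in the remainder of the paper (the trilinear estimates use duality only to write $\|\cdot\|_{X^{1/2}_{q,S}}$ as a supremum of $|B(\cdot,v)|$ over $\|v\|_{Y^{-1/2}_{q',S}}\lesssim1$, for which an isomorphism suffices). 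Also, the word ``isometric'' in the paper's statement should be read loosely; your approach reproduces what is actually needed, and the blind proposal is sound as a sketch of the argument in \cite{GRW21}.
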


\section{Basic Estimates}



\begin{lem}{\rm \cite{KPV91}(Strichartz-type Estimates)} \label{strichartz}
Let $2\leq p,q\leq\infty$, a pair $(p, q)$ is called biharmonic admissible or Strichartz admissible if
   \[
\frac{4}{p}+\frac{1}{q}=\frac{1}{2}\ \ \    \hbox{or}\ \ \    \frac{2}{p}+\frac{1}{q}=\frac{1}{2}.
    \]
\begin{itemize}
\item[\rm (i)] Let $(p,q)$ be biharmonic admissible, then
\begin{align}\label{biharmonic}
\|e^{it\partial^4_x}\phi\|_{L^p_tL^q_x}\lesssim \|\phi\|_{L^2}.
\end{align}
\item[\rm (ii)]Let $(p,q)$ be Strichartz admissible, then
\begin{align}\label{strichartza}
\|D_x^{2/p}e^{it\partial^4_x}\phi\|_{L^p_tL^q_x}\lesssim \|\phi\|_{L^2}.
\end{align}
In particular, for $N\geq1$,
\begin{align}\label{strichartzb}
 \|P_{N}e^{it\partial^4_x}\phi\|_{L^p_tL^q_x}\lesssim \langle N\rangle^{-2/p} \|\phi\|_{L^2}.
\end{align}
By testing atoms in $U^p_S$ space, we obtain
\begin{align}\label{strichartzc}
 \|P_{N}u\|_{L^p_tL^q_x}\lesssim \langle N\rangle^{-2/p} \|u\|_{U^p_S}.
\end{align}
\end{itemize}
\end{lem}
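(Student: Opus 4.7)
The plan is to follow the classical Keel--Tao $TT^*$ route for dispersive equations, feeding in two different dispersive inputs tailored to the two admissibility conditions, and then to transfer the resulting estimates from free evolutions first to frequency-localized pieces and finally to $U^p_S$ by atomic decomposition.

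First I would establish the basic dispersive bound $\|e^{it\partial_x^4}\phi\|_{L^\infty_x}\lesssim |t|^{-1/4}\|\phi\|_{L^1_x}$. The kernel $\int e^{i(x\xi+t\xi^4)}\,d\xi$ has a simple stationary point at $\xi_c=(-x/(4t))^{1/3}$, and the rescaling $\xi=|t|^{-1/4}\eta$ reduces the kernel to $|t|^{-1/4}\int e^{i(y\eta\pm\eta^4)}\,d\eta$ with $y=x|t|^{-1/4}$, whose oscillatory integral is uniformly bounded in $y$ by the fourth-derivative Van der Corput lemma after a standard integration-by-parts truncation at infinity. Interpolating with $L^2$ conservation and running the Keel--Tao $TT^*$ argument for a dispersive group of decay rate $\sigma=1/4$ produces the admissibility condition $\tfrac{1}{p}=\sigma(\tfrac{1}{2}-\tfrac{1}{q})$, i.e., $\tfrac{4}{p}+\tfrac{1}{q}=\tfrac{1}{2}$, which is \eqref{biharmonic}.

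For part (ii), I would restrict to a dyadic frequency shell $|\xi|\sim N$ with $N\geq 1$ and exploit that on this band the biharmonic phase is effectively a Schr\"odinger phase with rescaled time $N^{2}t$. Stationary phase restricted to the band gives $\|P_N e^{it\partial_x^4}\phi\|_{L^\infty_x}\lesssim (N^{2}|t|)^{-1/2}\|\phi\|_{L^1_x}$; the time change of variables $\tau=N^{2}t$ converts $\|P_N u\|_{L^p_tL^q_x}$ into $N^{-2/p}\|P_N\tilde u\|_{L^p_\tau L^q_x}$ and reduces the Strichartz estimate to the 1D Schr\"odinger case with admissibility $\tfrac{2}{p}+\tfrac{1}{q}=\tfrac{1}{2}$, producing \eqref{strichartzb}; the case $N\leq 1$ follows by Bernstein combined with \eqref{biharmonic}. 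The homogeneous estimate \eqref{strichartza} then follows by the Littlewood--Paley square function together with Minkowski (using $p,q\geq 2$), absorbing the weight $N^{2/p}$ into $D_x^{2/p}$.

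Finally, \eqref{strichartzc} follows by testing \eqref{strichartzb} on $U^p_S$-atoms. For an atom $a=\sum_{k=1}^K\chi_{[t_{k-1},t_k)}\,e^{it\partial_x^4}\phi_{k-1}$ with $\sum_k\|\phi_{k-1}\|_{L^2}^p=1$, disjointness of the time intervals and Fubini yield $\|P_N a\|_{L^p_tL^q_x}^p=\sum_k\|P_N e^{it\partial_x^4}\phi_{k-1}\|_{L^p_{[t_{k-1},t_k)}L^q_x}^p\leq \langle N\rangle^{-2}$, so every atom has $L^p_tL^q_x$-norm at most $\langle N\rangle^{-2/p}$, and the bound passes to arbitrary $u\in U^p_S$ via the defining infimum of $\|\cdot\|_{U^p_S}$. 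The only step where I expect to spend genuine effort is the stationary-phase analysis of the unlocalized biharmonic kernel at the degeneracy $x=0$, where $\xi_c=0$ and the phase is purely quartic; once that is in hand, the $TT^*$ reduction, the Littlewood--Paley sum, and the atomic transfer are essentially mechanical.
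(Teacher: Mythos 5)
The paper does not prove this lemma: it is stated as a known result and attributed to \cite{KPV91}, so there is no internal proof to compare against. Your sketch is the standard route one would take, and the main steps are sound: the $|t|^{-1/4}$ dispersive bound from Van der Corput with the nondegenerate fourth derivative, the Keel--Tao $TT^*$ machinery giving $\tfrac{1}{p}=\sigma(\tfrac12-\tfrac1q)$ with $\sigma=\tfrac14$ (hence $\tfrac4p+\tfrac1q=\tfrac12$), the frequency-localized $(|t|N^2)^{-1/2}$ decay from the second-derivative Van der Corput on $|\xi|\sim N$ leading via $TT^*$ to $\tfrac2p+\tfrac1q=\tfrac12$, the Littlewood--Paley square-function/Minkowski summation (valid because $p,q\ge 2$) for \eqref{strichartza}, and the atomic transfer for \eqref{strichartzc}, which is exactly the routine disjoint-time-interval computation.

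Two remarks on precision. First, your description of part (ii) as "a time change of variables reducing to the 1D Schr\"odinger case" is only a heuristic: on $|\xi|\sim N$ the phase $t\xi^4$ is not a rescaled quadratic, and the cleaner rigorous statement is that the localized second-derivative Van der Corput bound $|K_N(t,\cdot)|\lesssim (|t|N^2)^{-1/2}$ fed into $TT^*$ directly produces the Schr\"odinger-admissibility relation with the $N^{-2/p}$ factor; this works uniformly for \emph{all} dyadic $N>0$, which is what you need for the homogeneous estimate \eqref{strichartza}. Second, your appeal to "Bernstein plus \eqref{biharmonic}" for $N\le 1$ is both unnecessary (the lemma asserts \eqref{strichartzb} only for $N\ge 1$, where $\langle N\rangle\sim N$) and not obviously correct, since Bernstein adjusts spatial integrability but cannot convert the biharmonic time exponent $p_{bi}$ into the strictly smaller Strichartz-admissible $p$; the uniform $(|t|N^2)^{-1/2}$ route above is what actually closes the small-$N$ part of \eqref{strichartza}. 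These are cosmetic; the substance of your proof is right.
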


\begin{lem}[\rm Bilinear Estimates]
Suppose that $\widehat{u_0}, \widehat{v_0}$ are localized in some compact intervals $I_1,I_2$ with $dist(I_1, I_2)\gtrsim \lambda>0$. For any $\xi_1\in\text{supp}\ \widehat{u_0},\ \xi_2\in\text{supp}\ \widehat{v_0}$,
\begin{itemize}
\item[\rm (i)] if $2|\xi_1|\leq|\xi_2|\sim N$, then
\begin{align}
 \|e^{it\partial_x^4}u_0e^{it\partial_x^4} v_0 \|_{L^2_{x, t}} \lesssim  N^{-3/2} \|u_0\|_{L^2} \|v_0\|_{L^2},\label{0bilinear1}\\
 \|e^{it\partial_x^4}u_0\overline{e^{it\partial_x^4} v_0} \|_{L^2_{x, t}} \lesssim  N^{-3/2} \|u_0\|_{L^2} \|v_0\|_{L^2}. \label{0bilinear2}
\end{align}
\item[\rm (ii)] if $|\xi_1|\sim N_1$, $|\xi_2|\sim N_2$, then
\begin{align}
  \|e^{it\partial_x^4}u_0e^{it\partial_x^4} v_0 \|_{L^2_{x, t}} \lesssim (\lambda \max\{N_1^2,N_2^2\})^{-1/2}\|u_0\|_{L^2} \|v_0\|_{L^2},\label{0bilinear3}\\
 \|e^{it\partial_x^4}u_0\overline{e^{it\partial_x^4} v_0} \|_{L^2_{x, t}} \lesssim (\lambda \max\{N_1^2,N_2^2\})^{-1/2}\|u_0\|_{L^2} \|v_0\|_{L^2}.\label{0bilinear4}
\end{align}
\end{itemize}
\end{lem}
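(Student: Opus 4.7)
The plan is to prove all four estimates by Plancherel's theorem in space-time combined with a change of variables from the frequency pair $(\xi_1,\xi_2)$ to $(\xi,\tau)$, where $\xi$ is the spatial frequency and $\tau$ the temporal frequency of the bilinear expression. The required decay will then come from a pointwise lower bound on the Jacobian of this change of variables.

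First I would compute the space-time Fourier transform of $e^{it\partial_x^4}u_0\cdot e^{it\partial_x^4}v_0$. Using $\mathcal{F}_x(e^{it\partial_x^4}w)(\eta,t)=e^{it\eta^4}\hat{w}(\eta)$, convolving in $\xi$ and then Fourier transforming in $t$ yields
\[
\mathcal{F}_{x,t}\bigl(e^{it\partial_x^4}u_0\cdot e^{it\partial_x^4}v_0\bigr)(\xi,\tau) = \int \hat{u}_0(\xi_1)\hat{v}_0(\xi-\xi_1)\,\delta\bigl(\tau-\xi_1^4-(\xi-\xi_1)^4\bigr)\,d\xi_1.
\]
Writing $\xi_2=\xi-\xi_1$ and applying Plancherel, $\|e^{it\partial_x^4}u_0\, e^{it\partial_x^4}v_0\|_{L^2_{x,t}}^2$ equals the $L^2_{\xi,\tau}$-norm squared of the above. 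Inverting the change of variables $(\xi_1,\xi_2)\mapsto(\xi_1+\xi_2,\xi_1^4+\xi_2^4)$, whose Jacobian is
\[
J(\xi_1,\xi_2) = 4(\xi_1-\xi_2)(\xi_1^2+\xi_1\xi_2+\xi_2^2),
\]
I obtain
\[
\|e^{it\partial_x^4}u_0\, e^{it\partial_x^4}v_0\|_{L^2_{x,t}}^2 \;\lesssim\; \sup_{\xi_1\in I_1,\,\xi_2\in I_2}\frac{1}{|J(\xi_1,\xi_2)|}\,\|u_0\|_{L^2}^2\|v_0\|_{L^2}^2,
\]
provided the map is injective on $I_1\times I_2$. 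Since the only other preimage of a point $(\xi,\tau)$ is the transposed pair $(\xi_2,\xi_1)$, which lies in $I_2\times I_1$, the support separation in both (i) and (ii) guarantees injectivity.

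The remaining step is the pointwise lower bound on $|J|$. For case (i), the condition $2|\xi_1|\leq|\xi_2|\sim N$ gives $|\xi_1-\xi_2|\sim N$ together with
\[
\xi_1^2+\xi_1\xi_2+\xi_2^2 = \bigl(\xi_1+\tfrac12\xi_2\bigr)^2 + \tfrac34\xi_2^2 \gtrsim N^2,
\]
so $|J|\gtrsim N^3$ and \eqref{0bilinear1} follows. For case (ii), the separation hypothesis forces $|\xi_1-\xi_2|\geq \mathrm{dist}(I_1,I_2)\gtrsim\lambda$, while the identity above together with its $\xi_1\leftrightarrow\xi_2$ analogue gives $\xi_1^2+\xi_1\xi_2+\xi_2^2\gtrsim\max\{N_1^2,N_2^2\}$, hence $|J|\gtrsim\lambda\max\{N_1^2,N_2^2\}$, which yields \eqref{0bilinear3}.

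For the conjugated estimates \eqref{0bilinear2} and \eqref{0bilinear4}, I would use $\mathcal{F}_x(\overline{e^{it\partial_x^4}v_0})(\xi_2,t)=e^{-it\xi_2^4}\overline{\hat{v}_0(-\xi_2)}$; re-running the argument, the roles become $\xi=\xi_1-\xi_2$ and $\tau=\xi_1^4-\xi_2^4$, and the Jacobian becomes $4(\xi_1^3-\xi_2^3)=4(\xi_1-\xi_2)(\xi_1^2+\xi_1\xi_2+\xi_2^2)$, to which the same lower bounds apply verbatim. The main bookkeeping obstacle is tracking sign conventions in the conjugated case and verifying that the change of variables is a diffeomorphism onto its image; however, the support hypotheses comfortably keep us away from the vanishing locus of $J$, so no further dyadic decomposition is needed.
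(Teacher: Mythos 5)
Your proof is correct and follows essentially the same route as the paper: both arguments reduce the bilinear $L^2_{x,t}$ estimate to a change of variables $(\xi_1,\xi_2)\mapsto(\xi_1+\xi_2,\xi_1^4+\xi_2^4)$ (or $(\xi_1-\xi_2,\xi_1^4-\xi_2^4)$ in the conjugated case) and to the pointwise lower bound $|J|=4|\xi_1^3-\xi_2^3|=4|\xi_1-\xi_2|\,|\xi_1^2+\xi_1\xi_2+\xi_2^2|\gtrsim N^3$ in case (i) and $\gtrsim \lambda\max\{N_1^2,N_2^2\}$ in case (ii). The only presentational difference is that the paper phrases the reduction via duality (Cauchy--Schwarz against an arbitrary $g\in L^2_{x,t}$, see \eqref{bidual}), whereas you apply Plancherel to the bilinear expression directly and therefore must — as you correctly do — note that the change of variables is injective on $I_1\times I_2$ because the only other preimage of $(\xi,\tau)$ is the transposed pair, excluded by the support separation. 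In the paper's formulation this injectivity is used implicitly when writing $\xi_1(\xi,\tau),\xi_2(\xi,\tau)$ after the change of variables, so the two proofs are substantively identical.
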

\begin{proof}
For \eqref{0bilinear3}, by duality, it suffices to prove that
\begin{align}\label{bidual}
     \sup_{\|g\|_{L_{x,t}^2}=1}  \left | \int_{\mathbb{R}^{2}} e^{it\partial_x^4}u_0e^{it\partial_x^4} v_0 \cdot \overline{g}\ dxdt \right | \lesssim (\lambda \max\{N_1^2,N_2^2\})^{-1/2}\|u_0\|_{L^2} \|v_0\|_{L^2}.
\end{align}
From the Plancherel theorem and the definition of Fourier transform, we know that
\begin{align}
\text{LHS}\eqref{bidual}&=  \sup_{\|g\|_{L_{x,t}^2}=1} \left |  \int_{\mathbb{R}^{2}} \int e^{it((\xi-\xi_1)^4+\xi_1^4)} \widehat{u}_0(\xi_1) \widehat{v}_0(\xi-\xi_1) d\xi_1 \cdot\overline{\widehat{g}}(\xi,t)d\xi dt\right |\nonumber\\
&=\sup_{\|g\|_{L_{x,t}^2}=1}\left | \int_{\mathbb{R}^{2}} \overline{(\mathscr{F}_{x,t}{g})(\xi_1+\xi_2,\xi_1^4+\xi_2^4)}\widehat{u}_0(\xi_1) \widehat{v}_0(\xi_2) d\xi_1 d\xi_2\right |
 \label{bilinearest1}
\end{align}
Let $\xi=\xi_1+\xi_2$ and $\tau=\xi_1^4+\xi_2^4$. A simple calculation shows that $d\xi d\tau= |J|d\xi_1 d\xi_2$,$|J|=4|\xi_1^3-\xi_2^3|\gtrsim \lambda\cdot \max\{N_1^2,N_2^2\} $.  Then by using H\"older's inequality and the change of variables, we get
\begin{align}
\text{LHS}\eqref{bidual}&\lesssim \bigg(\int_{\mathbb{R}^{2}}\big|\widehat{u}_0\big(\xi_1(\xi,\tau)\big) \widehat{v}_0\big(\xi_2(\xi,\tau)\big)\big|^2 |J|^{-2}d\xi d\tau\bigg)^{1/2}\nonumber\\
&\lesssim \bigg(\int_{\mathbb{R}^{2}}\big|\widehat{u}_0\big(\xi_1\big) \widehat{v}_0\big(\xi_2\big)\big|^2 |J|^{-1}d\xi_1 d\xi_2\bigg)^{1/2}\nonumber\\
&\lesssim (\lambda\cdot \max\{N_1^2,N_2^2\})^{-1/2}\|u_0\|_{L^2} \|v_0\|_{L^2}.
\end{align}
A similar argument gives \eqref{0bilinear1} as long as $|J|$ is replaced by $4|\xi_1^3-\xi_2^3|\sim N^3$. \eqref{0bilinear2} and \eqref{0bilinear4} can be derived through analogous discussions.
\end{proof}
Next by testing atoms in the $U^2$ space and applying the interpolation in Proposition \ref{UVprop2}, we get the bilinear estimates as follows.
\begin{cor}[\rm Bilinear Estimates] \label{V2decay2}
 Let $0<T\leq 1$. Suppose that $\widehat{u}, \widehat{v}$ are localized in some compact intervals $I_1,I_2$ with $dist(I_1, I_2)\gtrsim \lambda>0$. For any $0<\varepsilon \ll 1$, $\xi_1\in\text{supp}\ \widehat{u},\ \xi_2\in\text{supp}\ \widehat{v}$,
\begin{itemize}
\item[\rm (i)] if $2|\xi_1|\leq|\xi_2|\sim N$, then
\begin{align}
 \|u v \|_{L^2_{x, t\in[0,T]}} \lesssim T^{\varepsilon/4} N^{-3/2 + \varepsilon} \|u\|_{V^2_S} \|v\|_{V^2_S},\label{bilinear1}\\
 \|u \bar{v} \|_{L^2_{x, t\in[0,T]}} \lesssim T^{\varepsilon/4} N^{-3/2 + \varepsilon} \|u\|_{V^2_S} \|v\|_{V^2_S}. \label{bilinear2}
\end{align}
\item[\rm (ii)] if $|\xi_1|\sim N_1$, $|\xi_2|\sim N_2$, then
\begin{align}
 \|u v \|_{L^2_{x, t\in[0,T]}} \lesssim T^{\varepsilon/4} (\lambda\cdot \max\{N_1^2,N_2^2\})^{-1/2 + \varepsilon} \|u\|_{V^2_S} \|v\|_{V^2_S},\label{bilinear3}\\
  \|u \bar{v} \|_{L^2_{x, t\in[0,T]}} \lesssim T^{\varepsilon/4} (\lambda\cdot \max\{N_1^2,N_2^2\})^{-1/2 + \varepsilon} \|u\|_{V^2_S} \|v\|_{V^2_S}.\label{bilinear4}
\end{align}
\end{itemize}
\end{cor}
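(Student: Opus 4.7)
The corollary upgrades the free-solution bilinear estimates \eqref{0bilinear1}--\eqref{0bilinear4} to the $V^2_S$-setting via the standard $U^p/V^p$ transfer principle. I would proceed in three steps: first pass to $U^2_S$ by testing atoms, then prove a weaker $U^q_S$ bilinear estimate for some $q>2$ with a positive power of $T$, and finally combine the two by the Koch--Tataru interpolation in Proposition \ref{UVprop2}.

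For the first step, let $a = \sum_k \chi_{[t_{k-1},t_k)}\phi_{k-1}$ be a $U^2_S$-atom (written before applying $e^{it\partial_x^4}$), and similarly $b = \sum_l \chi_{[s_{l-1},s_l)}\psi_{l-1}$. Since the time-cells $[t_{k-1},t_k)\cap[s_{l-1},s_l)$ are pairwise disjoint, $\|ab\|_{L^2_{x,t}}^2$ splits as a double sum whose $(k,l)$-entry is the $L^2_{x,t}$-norm of a product of two free evolutions on that cell. Extending each cell integral to all of $\mathbb{R}_t$ and invoking \eqref{0bilinear3} (and \eqref{0bilinear1}, and the conjugate versions \eqref{0bilinear2}, \eqref{0bilinear4}), each entry is bounded by the sharp constant times $\|\phi_{k-1}\|_{L^2}^2\|\psi_{l-1}\|_{L^2}^2$. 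The atom normalisation $\sum_k\|\phi_{k-1}\|_{L^2}^2=\sum_l\|\psi_{l-1}\|_{L^2}^2=1$ then telescopes the double sum, and the atomic definition of $\|\cdot\|_{U^2_S}$ extends the bound to all of $U^2_S\times U^2_S$ with the constant $(\lambda\cdot\max\{N_1^2,N_2^2\})^{-1/2}$ (resp.\ $N^{-3/2}$).

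For the second step, I would derive a quantitatively much weaker bound of the form
\[
\|uv\|_{L^2_{x,t}([0,T])}\lesssim T^{\alpha}\langle N_1\vee N_2\rangle^{C}\|u\|_{U^q_S}\|v\|_{U^q_S}
\]
for some $q>2$ and $\alpha,C>0$, using Hölder in $t\in[0,T]$ together with a Strichartz or biharmonic admissible pair from Lemma \ref{strichartz}, transferred to $U^q_S$ by atom-testing as in \eqref{strichartzc}. The precise values of $q,\alpha,C$ are irrelevant; what matters is $\alpha>0$.

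Finally, for $u,v\in V^2_S$ with unit norm, apply Proposition \ref{UVprop2} to decompose $u=u_1+u_2$, $v=v_1+v_2$ with $\|u_1\|_{U^2_S},\|v_1\|_{U^2_S}\lesssim M$ and $\|u_2\|_{U^q_S},\|v_2\|_{U^q_S}\lesssim e^{-\epsilon M}$. Expanding $uv$ into four cross-terms, I would bound $u_1v_1$ by Step 1 and the other three terms by Step 2. Choosing $M\sim(C/\epsilon)\log\langle N_1\vee N_2\rangle$ makes $e^{-\epsilon M}$ absorb the $\langle N\rangle^C$ loss, while the resulting $M^2\sim(\log N)^2$ polynomial on the dominant $u_1v_1$ contribution is swallowed by an $N^{\varepsilon}$ budget; the $T^{\alpha}$ factor from Step 2 becomes $T^{\varepsilon/4}$ after retuning $\varepsilon$. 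The most delicate point is precisely this bookkeeping: one must verify that the trivial $U^q_S$-estimate genuinely retains a positive power of $T$, so that the exponential gain in $M$ simultaneously dominates the polynomial $\langle N\rangle^C$ loss and leaves only a harmless $N^{\varepsilon}$ deviation from the sharp $U^2_S$ scaling proved in Step 1.
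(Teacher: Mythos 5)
Your overall strategy---atom-testing for the $U^2_S\times U^2_S$ bilinear estimate, a crude $U^q_S$-bound carrying a positive power of $T$, then Koch--Tataru interpolation via Proposition \ref{UVprop2}---is exactly the route the paper indicates in the sentence preceding the corollary, and Steps~1 and~2 are sound. The gap is in the bookkeeping of Step~3: with your choice $M\sim \epsilon^{-1}\log N$, the leading term $u_1 v_1$ is bounded solely by the global-in-time $U^2_S\times U^2_S$ estimate from Step~1 and therefore carries \emph{no} power of $T$ at all. Your final sentence (``the $T^\alpha$ factor from Step 2 becomes $T^{\varepsilon/4}$ after retuning $\varepsilon$'') applies only to the three cross-terms; for $u_1v_1$ you only obtain $M^2 N^{-3/2}\lesssim N^{-3/2+\varepsilon}\,\|u\|_{V^2_S}\|v\|_{V^2_S}$, and no tuning of $M$ can fix this, since shrinking $M$ with $T$ destroys the $e^{-\epsilon M}$ gain needed to control $u_2,v_2$.

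The missing ingredient is a crude, $T$-gaining bound on $u_1v_1$ itself to be combined multiplicatively with the sharp $U^2$ estimate. For instance, using H\"older in time and the biharmonic-admissible pair $(8,\infty)$,
\begin{align*}
\|u_1 v_1\|_{L^2_{x,t\in[0,T]}} \leq \|u_1\|_{L^4_{t\in[0,T]}L^\infty_x}\,\|v_1\|_{L^4_{t\in[0,T]}L^2_x}
\lesssim T^{1/8}\|u_1\|_{U^8_S}\cdot T^{1/4}\|v_1\|_{U^2_S}
\lesssim T^{3/8}\|u_1\|_{U^2_S}\|v_1\|_{U^2_S},
\end{align*}
with no $N$-decay. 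Taking a geometric mean of this with the sharp $N^{-3/2}\|u_1\|_{U^2_S}\|v_1\|_{U^2_S}$ bound from Step~1, namely $\min(A,B)\leq A^{1-\theta}B^{\theta}$ with $\theta\sim\varepsilon$, produces $T^{\varepsilon/4}N^{-3/2+\varepsilon}\|u_1\|_{U^2_S}\|v_1\|_{U^2_S}$. Only then does your choice of $M$ and the decomposition in Step~3 deliver the claimed $T^{\varepsilon/4}N^{-3/2+\varepsilon}\|u\|_{V^2_S}\|v\|_{V^2_S}$.
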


\begin{lem}\label{L4} {\rm ($L^4$ Estimates)}
Let $I\subset [0, +\infty)$ or $(-\infty, 0]$ with $|I|<\infty$.  For  any  $\theta\in (0,1)$, $\beta>0$, we have
\begin{align}
 \|u_I\|^2_{L^4_{x,t\in [0,T]}} \lesssim (T^{1/4}+ T^{(1-\theta)/4}|I|^{2\beta+(1-\theta)/2 }) \|u\|^2_{X^{-1/4}_{4,S}(I)}. \label{lebesgue4}
\end{align}
In particular, if $1 \lesssim  |I| <\infty$, $0<T<1$, then for any $0< \varepsilon \ll 1$, $4\leq q\leq\infty$
\begin{align}
 \|u_I\|_{L^4_{x,t\in [0,T]}} \lesssim   T^{\varepsilon/4}|I|^{1/4-1/q + \varepsilon} \max_{\lambda \in I} \langle \lambda\rangle^{-3/4} \|u\|_{X^{1/2}_{q,S}(I)}.  \label{lebesgue4a}
\end{align}
\end{lem}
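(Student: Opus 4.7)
The plan is to write $\|u_I\|_{L^4_{x,t}}^2 = \|u_I\overline{u_I}\|_{L^2_{x,t}}$, expand the product into a bilinear sum over modulation blocks, and split the pairs $(\lambda_1,\lambda_2)\in (I\cap\mathbb{Z})^2$ into a near-diagonal regime $|\lambda_1-\lambda_2|\leq c_0$ and an off-diagonal regime $|\lambda_1-\lambda_2|> c_0$. Since the Fourier support of $u_{\lambda_1}\overline{u_{\lambda_2}}$ lies in an $O(1)$-neighborhood of $\lambda_1-\lambda_2$, grouping the pairs by the integer shift $k=\lambda_1-\lambda_2$ yields an almost-orthogonal $\ell^2_k$ decomposition in $L^2_x$, so I can work one value of $k$ at a time.

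For the near-diagonal part I would use the Strichartz admissible pair $(p,q)=(8,4)$ from Lemma \ref{strichartz} together with $U^2_S\hookrightarrow U^8_S$ and H\"older in time on $[0,T]$, obtaining
\begin{equation*}
\|u_\lambda\|_{L^4_{x,t\in[0,T]}}\lesssim T^{1/8}\langle\lambda\rangle^{-1/4}\|u_\lambda\|_{U^2_S}.
\end{equation*}
Since only $O(1)$ values of $\lambda_2$ pair with each $\lambda_1$ here, the near-diagonal sum collapses via $\ell^2$-orthogonality in $k$ to a square function controlled by
\begin{equation*}
\Big(\sum_{\lambda\in I\cap\mathbb{Z}}\|u_\lambda\|_{L^4_{x,t}}^4\Big)^{1/2} \lesssim T^{1/4}\Big(\sum_\lambda \langle\lambda\rangle^{-1}\|u_\lambda\|_{U^2_S}^4\Big)^{1/2} = T^{1/4}\|u\|_{X^{-1/4}_{4,S}(I)}^2,
\end{equation*}
producing the $T^{1/4}$ term in \eqref{lebesgue4}.

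For the off-diagonal part I would invoke Corollary \ref{V2decay2}(ii); the embedding $U^2_S\hookrightarrow V^2_S$ gives
\begin{equation*}
\|u_{\lambda_1}\overline{u_{\lambda_2}}\|_{L^2_{x,t\in[0,T]}}\lesssim T^{\varepsilon_0/4}\bigl(|\lambda_1-\lambda_2|\max\{\langle\lambda_1\rangle,\langle\lambda_2\rangle\}^2\bigr)^{-1/2+\varepsilon_0}\|u_{\lambda_1}\|_{U^2_S}\|u_{\lambda_2}\|_{U^2_S}.
\end{equation*}
Fourier orthogonality in $k=\lambda_1-\lambda_2$ followed by Cauchy--Schwarz in $\lambda_1$ reduces the off-diagonal sum to the discrete sum $\sum_{|k|>c_0}|k|^{-1+2\varepsilon_0}$ (finite) times $\sum_\lambda\langle\lambda\rangle^{-1}\|u_\lambda\|_{U^2_S}^4=\|u\|_{X^{-1/4}_{4,S}(I)}^4$, with an additional polynomial loss in $|I|$ coming from balancing $\max\{\langle\lambda_1\rangle,\langle\lambda_2\rangle\}^{-2+4\varepsilon_0}$ against the weight $\langle\lambda\rangle^{-1}$. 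Parameterizing $\varepsilon_0=(1-\theta)/2+\beta$ converts this loss into exactly the second term $T^{(1-\theta)/4}|I|^{2\beta+(1-\theta)/2}\|u\|_{X^{-1/4}_{4,S}(I)}^2$ appearing in \eqref{lebesgue4}.

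To deduce \eqref{lebesgue4a} I would then fix $\theta=1-2\varepsilon$ and $\beta=\varepsilon/2$ in \eqref{lebesgue4}, so that (since $0<T<1$) both terms are bounded by $T^{\varepsilon/2}|I|^{2\varepsilon}\|u\|_{X^{-1/4}_{4,S}(I)}^2$, and then convert the norm using $\ell^{q/4}$--$\ell^{q/(q-4)}$ H\"older in $\lambda$ for $q>4$ (and trivially for $q=4$):
\begin{equation*}
\|u\|_{X^{-1/4}_{4,S}(I)}^4 = \sum_{\lambda\in I}\langle\lambda\rangle^{-3}\bigl(\langle\lambda\rangle^{1/2}\|u_\lambda\|_{U^2_S}\bigr)^4 \lesssim \max_{\lambda\in I}\langle\lambda\rangle^{-3}\cdot|I|^{(q-4)/q}\|u\|_{X^{1/2}_{q,S}(I)}^4,
\end{equation*}
which yields the factor $|I|^{1/4-1/q}\max_\lambda\langle\lambda\rangle^{-3/4}$ in \eqref{lebesgue4a}. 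The hard part will be the bookkeeping in the off-diagonal regime: the $|k|^{-1+2\varepsilon_0}$ decay, the high-frequency weight $\max\{\langle\lambda_1\rangle,\langle\lambda_2\rangle\}^{-2+4\varepsilon_0}$, and the $T^{\varepsilon_0/4}$ loss must all be matched against the $\langle\lambda\rangle^{-1}$ weight in the $X^{-1/4}_{4,S}$ norm so that the free parameters $\theta$ and $\beta$ appear in precisely the combinations stated in \eqref{lebesgue4}.
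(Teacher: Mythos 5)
Your overall plan mirrors the paper's: split the bilinear expansion into a diagonal piece handled by the $L^8_tL^4_x$ Strichartz estimate (with $U^2_S\hookrightarrow U^8_S$ and H\"older in time) and an off-diagonal piece handled by the transverse bilinear estimate of Corollary~\ref{V2decay2} (which already absorbs the $U^p$--$V^p$ interpolation step that the paper cites separately), and then derive \eqref{lebesgue4a} from \eqref{lebesgue4} by a change of the $\ell^q$ index via H\"older. Your last step --- the conversion $\|u\|_{X^{-1/4}_{4,S}(I)}^4\le\max_{\lambda\in I}\langle\lambda\rangle^{-3}\,|I|^{1-4/q}\|u\|_{X^{1/2}_{q,S}(I)}^4$ together with the choice $\theta=1-2\varepsilon$, $\beta=\varepsilon/2$ --- reproduces \eqref{lebesgue4a} correctly.

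There is however a genuine gap in the core bilinear step, and one exponent claim is wrong. You expand $\|u_I\|_{L^4}^2 = \|u_I\overline{u_I}\|_{L^2}$ and group the pairs by $k=\lambda_1-\lambda_2$, citing ``Fourier orthogonality'' both to sum in $k$ and (implicitly, to reach the $\ell^4$ power $\sum_\lambda\langle\lambda\rangle^{-1}\|u_\lambda\|_{U^2_S}^4$) to form a square function in $\lambda_1$ within each fixed $k$. The $\ell^2_k$ almost-orthogonality is fine, but for a \emph{fixed} $k$ the spatial Fourier support of every term $u_{\lambda_1}\overline{u_{\lambda_1-k}}$ lies in the \emph{same} $O(1)$-neighborhood of $k$, so there is no spatial-frequency orthogonality in $\lambda_1$ at all; the triangle inequality in $\lambda_1$ followed by Cauchy--Schwarz does not produce an $\ell^4_\lambda$ quantity without a wasteful $|I|^{1/2}$ factor. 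The paper avoids this by writing $\|u_I\|^2_{L^4}=\|(u_I)^2\|_{L^2}=\bigl\|\sum_{m,n}u_m u_n\bigr\|_{L^2}$ (note: $u_m u_n$, not $u_m\overline{u_n}$), whose Fourier support sits near $m+n$; then for fixed $j=m-n$ the supports near $2m-j$ separate as $m$ runs, giving precisely the square function $\bigl(\sum_m\|u_m u_{m-j}\|^2_{L^2}\bigr)^{1/2}$ you need. Since $\|u_I\overline{u_I}\|_{L^2}=\|(u_I)^2\|_{L^2}$, the fix is painless, but your version as written does not have the orthogonality you invoke. Separately, the statement that $\sum_{|k|>c_0}|k|^{-1+2\varepsilon_0}$ is ``finite'' is false: that series diverges for $\varepsilon_0>0$. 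What saves the argument is that $|k|\le|I|$, so the partial sum is $\lesssim|I|^{2\varepsilon_0}$; this is in fact the principal source of the $|I|$-power in \eqref{lebesgue4}, not an afterthought, and is tied to the one-sidedness hypothesis $I\subset[0,\infty)$ or $(-\infty,0]$, which forces $\max\{\langle\lambda_1\rangle,\langle\lambda_2\rangle\}\gtrsim|k|$ so the $\max^{-2+4\varepsilon_0}$ factor from the bilinear estimate can be traded for $|k|$-decay. Your bookkeeping needs to display this trade explicitly if the claimed $T^{(1-\theta)/4}|I|^{2\beta+(1-\theta)/2}$ exponents are to be obtained.
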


\begin{proof}
Without loss of generality, we assume $I\subset [0, +\infty)$.
\begin{align*}
\| u_I \|_{L^4([0,T]\times \mathbb{R})}^2 &= \| (u_I)^2 \|_{L^2([0,T]\times \mathbb{R})}=\bigg\| \sum_{m, n\in I\cap \mathbb{Z}} u_m u_n\bigg\|_{L^2([0,T]\times \mathbb{R})} \\
	&\leq \sum_{k\in \mathbb{N}}\bigg\| \sum_{m-n\sim2^k}u_m u_n\bigg\|_{L^2([0,T]\times \mathbb{R})}.
\end{align*}
When $k=0$, we utilize the orthogonality in $L^2$ and Strichartz estimates in $L_t^8L_x^4$ to derive the conclusions.  When $k>0$, by applying the interpolation in Proposition \ref{UVprop2}, bilinear estimates in Lemma \ref{V2decay2}, and Strichartz estimates, we can complete the proof. The detailed proof follows a similar approach to the $L^4$ estimates in \cite{Chen20,Gu16}, and is therefore omitted here.
\end{proof}

\begin{lem}\label{V2toX}
Let $I\subset \mathbb{R}$ with $1\lesssim  |I|<\infty$, $2\leq q \leq\infty$, we have
\begin{align}
\|u_I\|_{L^\infty_t L^2_{x} \cap V^2_S} & \lesssim    |I|^{1/2-1/q} \max_{\lambda \in I} \langle \lambda\rangle^{-1/2} \|u\|_{X^{1/2}_{q,S}(I)}.  \label{V2}
\end{align}
\end{lem}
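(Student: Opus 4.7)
The plan is to bound $\|u_I\|_{L^\infty_t L^2_x}$ and $\|u_I\|_{V^2_S}$ separately, reducing both to the common quantity $\bigl(\sum_{\lambda\in I\cap\Z}\|u_\lambda\|_{U^2_S}^2\bigr)^{1/2}$, and then convert the $\ell^2$-sum over $\lambda$ into the $\ell^q$-sum defining $X^{1/2}_{q,S}$ via Hölder, paying a factor $|I|^{1/2-1/q}$.

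For the first piece, the Fourier supports of $u_\lambda=\Box_\lambda u$ are contained in $[\lambda-1,\lambda+1]$ and hence have uniformly bounded overlap, so at every time $t$ we have the almost-orthogonality estimate $\|u_I(t)\|_{L^2_x}^2\lesssim \sum_{\lambda\in I\cap\Z}\|u_\lambda(t)\|_{L^2_x}^2$. Taking $\sup_t$ inside the square sum and invoking the embedding $U^2_S\hookrightarrow L^\infty_t L^2_x$ from Proposition \ref{UVprop1}(ii) (which is immediate since $e^{-it\partial_x^4}$ is unitary on $L^2$), I get
\[
\|u_I\|_{L^\infty_t L^2_x}\lesssim \Bigl(\sum_{\lambda\in I\cap\Z}\|u_\lambda\|_{L^\infty_t L^2_x}^2\Bigr)^{1/2}\lesssim \Bigl(\sum_{\lambda\in I\cap\Z}\|u_\lambda\|_{U^2_S}^2\Bigr)^{1/2}.
\]
For the $V^2_S$ piece, I note that $\Box_\lambda$ is a Fourier multiplier and thus commutes with $e^{-it\partial_x^4}$, so setting $v=e^{-it\partial_x^4}u$ we have $\Box_\lambda v=e^{-it\partial_x^4}u_\lambda$. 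Applying Proposition \ref{orthogonality} to $v$ and using $U^2\hookrightarrow V^2$ from Proposition \ref{UVprop1}(ii) gives
\[
\|u_I\|_{V^2_S}=\|v_I\|_{V^2}\leq \Bigl(\sum_{\lambda\in I\cap\Z}\|\Box_\lambda v\|_{V^2}^2\Bigr)^{1/2}=\Bigl(\sum_{\lambda\in I\cap\Z}\|u_\lambda\|_{V^2_S}^2\Bigr)^{1/2}\lesssim \Bigl(\sum_{\lambda\in I\cap\Z}\|u_\lambda\|_{U^2_S}^2\Bigr)^{1/2}.
\]

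It then remains to pass from $\ell^2$ to $\ell^q$. Pulling out the weight as $\|u_\lambda\|_{U^2_S}=\langle\lambda\rangle^{-1/2}\cdot\langle\lambda\rangle^{1/2}\|u_\lambda\|_{U^2_S}$ and applying Hölder's inequality on $I\cap\Z$ with exponents $q/2$ and $q/(q-2)$ yields
\[
\sum_{\lambda\in I\cap\Z}\|u_\lambda\|_{U^2_S}^2\leq \max_{\lambda\in I}\langle\lambda\rangle^{-1}\cdot |I|^{1-2/q}\Bigl(\sum_{\lambda\in I\cap\Z}\langle\lambda\rangle^{q/2}\|u_\lambda\|_{U^2_S}^q\Bigr)^{2/q}.
\]
Taking square roots and recognising the last factor as $\|u\|_{X^{1/2}_{q,S}(I)}$ delivers the desired bound \eqref{V2}, with the endpoint $q=\infty$ handled by the obvious modification. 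The argument is essentially a packaging exercise; the only real subtlety is confirming that Proposition \ref{orthogonality}, which is stated for $V^2$, transfers cleanly to $V^2_S$, and this is handled precisely by the commutation of $\Box_\lambda$ with the linear biharmonic propagator.
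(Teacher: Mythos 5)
Your proof is correct and follows essentially the same route as the paper: reduce to the $\ell^2$-in-$\lambda$ sum via orthogonality, then convert to $\ell^q$ by Hölder with a factor $|I|^{1/2-1/q}$. The only divergence is cosmetic: the paper bounds the whole $L^\infty_t L^2_x\cap V^2_S$ norm at one stroke by invoking the embedding $V^2_S\subset L^\infty_t L^2_x$ and then applying Proposition \ref{orthogonality} once, whereas you give a separate almost-orthogonality argument in $L^2_x$ at fixed time for the $L^\infty_t L^2_x$ piece — a step the embedding renders redundant (and your extra step also implicitly relies on the same almost-orthogonality that underlies Proposition \ref{orthogonality}, so nothing is gained). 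Your remark about commuting $\Box_\lambda$ with $e^{-it\partial_x^4}$ is the correct justification for transferring the $V^2$ orthogonality to $V^2_S$, which the paper leaves implicit.
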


\begin{proof}
By employing the embedding $V^2_S \subset L^\infty_t L^2_x$, the orthogonality in $V^2$ and H\"older's inequality, we can derive the following results step by step:
\begin{align*}
\|u_I\|_{L^\infty_t L^2_{x} \cap V^2_S} &\lesssim  \bigg(\sum_{\lambda\in I}\|u_\lambda\|^2_{V^2_S}\bigg)^{1/2}
  \lesssim\max_{\lambda\in I}\langle\lambda\rangle^{-1/2}\bigg(\sum_{\lambda\in I}\big(\langle\lambda\rangle^{1/2}\|u_\lambda\|\big)^2_{V^2_S}\bigg)^{1/2}\nonumber\\
  &\lesssim|I|^{1/2-1/q} \max_{\lambda \in I} \langle \lambda\rangle^{-1/2} \|u\|_{X^{1/2}_{q,S}(I)}.
\end{align*}
\end{proof}


\section{Local well-posedness}

 It is known that 4NLS \eqref{4NLS} is equivalent to the following integral equation:
\begin{align}
u(x,t)=e^{it\partial^{4}_ x }u_0+i\int_{0}^{t}e^{i(t-\tau )\partial^{4} _x }F(u(\tau ))d\tau \label{integralEQ},
\end{align}
where
\begin{align*}
F(u)=8u_{xx}\left | u \right | ^2+2 \bar{u} _{xx}u^2+6u_x^2\bar{u}+4u\left | u_x \right | ^2+6u\left | u \right | ^4.
\end{align*}
\subsection{Trilinear estimates}

\begin{lem}
    Let $q\geq 2$ and $0<\varepsilon\ll 1$, for any $u\in X_{q,S}^{\frac{1}{2}}$, we have
\begin{align}
&\left \| \int_{0}^{t}e^{i(t-\tau )\partial _{x}^{4} }\left ( u_{xx}\left | u \right |^{2}  \right )(\tau )d\tau \right \|_{X_{q,S}^{\frac{1}{2}}}
\lesssim T^{\varepsilon }\left \| u \right \|_{X_{q,S}^{\frac{1}{2}}}^{3}; \label{uxxuudual}\\
&\left \| \int_{0}^{t}e^{i(t-\tau )\partial _{x}^{4} }\left ( \overline{u}_{xx}u^{2}  \right )(\tau )d\tau \right \|_{X_{q,S}^{\frac{1}{2}}}
\lesssim T^{\varepsilon }\left \| u \right \|_{X_{q,S}^{\frac{1}{2}}}^{3}; \label{uxxdualuu}\\
&\left \| \int_{0}^{t}e^{i(t-\tau )\partial _{x}^{4} }\left ( u_{x}^{2}\overline{u}   \right )(\tau )d\tau \right \|_{X_{q,S}^{\frac{1}{2}}}
\lesssim T^{\varepsilon }\left \| u \right \|_{X_{q,S}^{\frac{1}{2}}}^{3};\label{uxuxudual}\\
&\left \| \int_{0}^{t}e^{i(t-\tau )\partial _{x}^{4} }\left ( u\left | u_{x} \right |^{2}    \right )(\tau )d\tau \right \|_{X_{q,S}^{\frac{1}{2}}}
\lesssim T^{\varepsilon }\left \| u \right \|_{X_{q,S}^{\frac{1}{2}}}^{3}.\label{uuxuxdual}
\end{align}
\end{lem}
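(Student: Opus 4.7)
The plan is to reduce each of \eqref{uxxuudual}--\eqref{uuxuxdual} to a quadrilinear spacetime estimate via the duality of Proposition \ref{UVprop4}, and then close the estimate by combining the bilinear inequalities of Corollary \ref{V2decay2} with the $L^4$ bound of Lemma \ref{L4}.

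First, by $(X^{1/2}_q)^{\ast}=Y^{-1/2}_{q'}$ it suffices, for each trilinear nonlinearity $F_j(u)$, to prove
$$\left|\iint_{[0,T]\times\mathbb{R}}F_j(u)\,\bar v\,dx\,dt\right|\lesssim T^{\varepsilon}\,\|u\|_{X^{1/2}_{q,S}}^{3}\,\|v\|_{Y^{-1/2}_{q',S}}$$
for all test functions $v$. I would then expand $u=\sum_{\lambda}u_\lambda$ and $v=\sum_{\lambda_4}v_{\lambda_4}$ and rewrite the integral as a quadruple frequency sum. The Fourier support constraint $\lambda_1\pm\lambda_2\pm\lambda_3-\lambda_4=O(1)$ forces the two largest of the four labels to be comparable; write $N_{\ast}$ for this common large size and order the remaining labels $N_2\ge N_3$. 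The strategy is to pair the factor that carries the most derivatives with the partner at frequency $\sim N_{\ast}$ and apply \eqref{bilinear3} or \eqref{bilinear4}, which yields a gain of order $T^{\varepsilon/4}(\lambda N_{\ast}^{2})^{-1/2+\varepsilon}$, where $\lambda$ is the relative separation of the two top-frequency factors. The remaining two factors are handled by Cauchy--Schwarz together with the $L^4$ bound \eqref{lebesgue4a}, after which Lemma \ref{V2toX} converts $V^2_S$-norms to the $X^{1/2}_{q,S}$-norm. A H\"older application in $\ell^{q}$--$\ell^{q'}$ over the four dyadic indices then assembles the bound after summation of the geometric series in $\lambda$, $N_2$, and $N_3$.

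The main obstacle lies in the two-derivative terms \eqref{uxxuudual} and \eqref{uxxdualuu}. When $\partial_{xx}$ falls on a top-frequency factor, the symbol contributes a factor $N_{\ast}^{2}$, which is exactly cancelled by the $(\lambda N_{\ast}^{2})^{-1/2}$ bilinear gain together with the $\langle N_{\ast}\rangle^{-1/2}$ weight from the $Y^{-1/2}_{q',S}$ pairing, leaving no room to lose. Recovering summability in the separation parameter $\lambda$ and in the low-frequency direction relies on the small $\varepsilon$ slack produced by the interpolation step in Corollary \ref{V2decay2}; this is precisely where the restriction $s\ge 1/2$ is borderline. The gradient-square terms \eqref{uxuxudual} and \eqref{uuxuxdual} are analogous but easier: each single derivative is absorbed by the same bilinear gain with room to spare, and the additional symmetry between the two $u_x$ factors gives flexibility in choosing which pair to bilinearize. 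In every case the time factor $T^{\varepsilon/4}$ inherited from Corollary \ref{V2decay2} and Lemma \ref{L4} provides the smallness needed for the subsequent contraction mapping argument.
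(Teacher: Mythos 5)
You correctly identify the right toolkit and the right reduction: duality to $Y^{-1/2}_{q'}$ via Propositions \ref{UVprop3}--\ref{UVprop4}, frequency decomposition, the bilinear estimates of Corollary \ref{V2decay2}, the $L^4$ bound of Lemma \ref{L4}, and Lemma \ref{V2toX} to convert back to the $X^{1/2}_{q,S}$ norm. This is exactly the paper's starting point. However, the proposal compresses what is really the heart of the argument — the extensive case analysis of Steps 1--3, Tables \ref{caseA}--\ref{step2.3} — into a single heuristic, \emph{pair the factor carrying the derivatives with the partner at frequency $\sim N_\ast$ and bilinearize}, and that heuristic does not survive contact with the frequency constraint. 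First, the bilinear bounds \eqref{bilinear3}--\eqref{bilinear4} require a positive separation $\lambda$ between the two factors' Fourier supports; in the regime where all four unit-scale indices $\lambda_0,\dots,\lambda_3$ are comparable and adjacent (the contribution $\mathscr{L}^+_l$ in the paper), there is no separation to exploit and your pairing produces no gain at all. The paper handles this piece instead by orthogonality plus the $L^8_tL^4_x$ biharmonic Strichartz estimate in \eqref{Ll}. Your sketch does not mention this regime, and the argument as written would stall there.

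Second, which pair actually enjoys transversality depends on the signs and relative positions of $\lambda_0,\dots,\lambda_3$ at the unit scale, not merely on the dyadic magnitudes $N_\ast, N_2, N_3$ — this is precisely what the modulation-space decomposition (frequency-uniform first, then dyadic within each case) and the many subcases in Tables \ref{caseA}--\ref{step2.3} are organizing. In several of them (e.g.\ Cases $l_-l_{h-}l_+$, $a_+l_-a_+$, $l_-a_+a_+$) the productive bilinearization is \emph{not} the derivative factor against the other top-frequency factor; it is some other pair, chosen so that the separation is $\gtrsim\langle\lambda_0\rangle$ or $\gtrsim 2^{j}$ and the resulting geometric series in $j_1,j_2,j_3$ is summable for $0<\varepsilon<1/(2q)$. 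Your exponent accounting is also off: after duality the weight $\langle\lambda_0\rangle^{1/2}$ sits on the \emph{loss} side (it multiplies the integral in \eqref{4}), not on the gain side as a ``$\langle N_\ast\rangle^{-1/2}$ weight from the $Y^{-1/2}_{q',S}$ pairing,'' and $\lambda_0$ need not be comparable to $N_\ast$. Closing the estimate with no room to spare, as you acknowledge is needed, therefore requires tracking exactly which frequency is dominant, which pair separates by how much, and in which order to sum over the dyadic and unit-scale indices. These are the details the paper supplies and the proposal omits, so as stated it is an outline of the method rather than a proof.
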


\begin{proof}
 The most crucial and challenging estimate is \eqref{uxxuudual}. The estimates of \eqref{uxxdualuu}, \eqref{uxuxudual} and \eqref{uuxuxdual} can be deduced in a similar way as \eqref{uxxuudual}. Therefore, for the sake of brevity, we only prove \eqref{uxxuudual}.

 By Propositions \ref{UVprop3} and \ref{UVprop4}, we see that, for supp $v\subset \mathbb{R} \times [0,T]$,
\begin{align}
\left \| \int_{0}^{t}e^{i(t-\tau )\partial^{4} _x }F(u(\tau ))d\tau \right \|_{X_{q,S}^{\frac{1}{2} } }
&=\mathrm{sup} \left \{ \left | B\left ( \int_{0}^{t}e^{-i\tau \partial^{4} _x }F(u(\tau ))d\tau ,v \right )  \right |:\left \|  v \right \| _{Y_{q'}^{-1/2 } }\leqslant 1   \right \}  \nonumber\\
&\leqslant  \underset{\left \|  v \right \| _{Y_{q'}^{-1/2} }\leqslant 1}{\mathrm{sup}}\left | \int_{[0,T]}\left \langle F(u(\tau )), e^{i\tau \partial^{4} _x }v(\tau )\right \rangle d\tau  \right |  \nonumber\\
&\leqslant  \underset{\left \|  v \right \| _{Y_{q',S}^{-1/2} }\leqslant 1}{\mathrm{sup}}\left | \int_{[0,T]}\left \langle F(u(\tau )), v(\tau )\right \rangle d\tau  \right | \label{vie1}.
\end{align}
Thus it suffices to show that
\begin{align}
\left | \int_{\mathbb{R}\times \left [ 0,T \right ]  }\overline{v}(\partial _{x}^{2}u)u\overline{u}  dxdt  \right |
\lesssim T^{\varepsilon }\left \| u \right \|_{X_{q,S}^{\frac{1}{2}}}^{3}\left \| v \right \|_{Y_{q',S}^{-\frac{1}{2}}}\label{2}
\end{align}
We perform a frequency-uniform decomposition with $u,v$ in the left-hand side of (\ref{2}), it suffices to prove that
\begin{align}
\left | \sum_{\lambda _{0},\dots,\lambda _{3}  } \left \langle \lambda _{0} \right \rangle^{1/2}  \int_{\mathbb{R}\times \left [ 0,T \right ]  }\overline{v}_{\lambda _{0}} (\partial _{x}^{2}u_{\lambda _{1}})u_{\lambda _{2}}\overline{u}_{\lambda _{3}}  dxdt  \right |
\lesssim T^{\varepsilon }\left \| u \right \|_{X_{q,S}^{1/2}}^{3}\left \| v \right \|_{Y_{q',S}^{0}}.\label{4}
\end{align}

In order to keep the left-hand side of (\ref{4}) nonzero, we have the frequency constraint condition (FCC)
\begin{align}
\lambda _{1}+\lambda _{2}-\lambda _{3}-\lambda _{0}\approx 0. \label{FCC}
\end{align}
We only need to consider the cases that $\lambda _{0}$ is minimal or secondly minimal number in $\lambda _{0},\dots ,\lambda _{3}$ (Otherwise one can replace $\lambda _{0},\dots ,\lambda _{3}$ with $-\lambda _{0},\dots ,-\lambda _{3}$).

$\mathbf{Step\:1.}$ $\lambda _{0}=\underset{0\leqslant k\leqslant 3}{\mathrm{min}  }\lambda _{k}$. We separate the proof into two subcases $\lambda _{0}\geqslant  0$ and $\lambda _{0}<0$.

$\mathbf{Step\:1.1.}$ $\lambda _{0}\geqslant  0$. Let us denote $I_0=[0,1),\;I_j=[2^{j-1},2^j),\;j\geqslant 1.$ We decompose $\lambda _{0}+[0,\infty )$ by dyadic decomposition, $\mathrm{i.e.}$,
$$\lambda _{k}\in \lambda _{0}+[0,\infty )=\bigcup_{j_{k}\geqslant 0 }(\lambda _{0}+I_{j_{k} } ),\quad  k=1,2,3. $$
By FCC (\ref{FCC}), we see that $j_{3}\approx (j_1\vee j_2 )$. Then $\lambda _{0},\dots ,\lambda _{3}$ have the following two orders:
\begin{align*}
\mathrm{Order\;1:}& \quad \lambda _{0}\leqslant \lambda _{2} \leqslant \lambda _{1}\leqslant \lambda _{3}\\
\mathrm{Order\;2:}& \quad \lambda _{0}\leqslant \lambda _{1} \leqslant \lambda _{2}\leqslant \lambda _{3}
\end{align*}
We will take $\mathrm{Order\;1}$ for example because $\mathrm{Order\;2}$ is similar and even easier (notice that the derivatives are located in $u_{\lambda _{1}}$).

$\mathbf{Order\;1:}  \quad \lambda _{0}\leqslant \lambda _{2} \leqslant \lambda _{1}\leqslant \lambda _{3}$. In view of FCC (\ref{FCC}), we see that $j_2 \leqslant j_1 \approx j_3$. It turns out that
\begin{align*}
\mathscr{L}^+(u,v):=&\sum_{0\leqslant \lambda _{0}\leqslant \lambda _{2} \leqslant \lambda _{1}\leqslant \lambda _{3} } \left \langle \lambda _{0} \right \rangle^{1/2}  \int_{\mathbb{R}\times \left [ 0,T \right ]  }\overline{v}_{\lambda _{0}} (\partial _{x}^{2}u_{\lambda _{1}})u_{\lambda _{2}}\overline{u}_{\lambda _{3}}  dxdt \nonumber\\
\lesssim & \left ( \sum_{\lambda _{0}\geq0,\: 0\leqslant j_2\leqslant j_1\approx j_3\lesssim 1}+\sum_{\lambda _{0},j_2\geqslant 0,\:j_1\approx j_3\gg 1} \right ) \nonumber\\
&\quad\quad\left \langle \lambda _{0} \right \rangle^{1/2}  \int_{\mathbb{R}\times \left [ 0,T \right ]  }\overline{v}_{\lambda _{0}} (\partial _{x}^{2}u_{\lambda _{0}+I_{j_1}})u_{\lambda _{0}+I_{j_2}}\overline{u}_{\lambda _{0}+I_{j_3}}  dxdt\nonumber\\
:=& \mathscr{L}^+_l(u,v)+\mathscr{L}^+_h(u,v).
\end{align*}

In $\mathscr{L}^+_{l}(u,v)$, we have $\lambda_0 \thickapprox \lambda_1 \thickapprox \lambda_2  \thickapprox \lambda_3$. So, by H\"older's inequality, Strichartz estimate and $U^2_S \subset V^2_S \subset U^8_S$,  we have
 \begin{align}\label{Ll}
|\mathscr{L}^+_{l}(u,v)|
 & \lesssim T^{1/2}\sum_{0\leq\lambda_0 \thickapprox \lambda_1 \thickapprox \lambda_2  \thickapprox \lambda_3} \left\langle\lambda_0\right\rangle^{5/2} \|v_{\lambda_0}\|_{L^{8}_{t}L^{4}_{x}} \|u_{\lambda_1}\|_{L^{8}_{t}L^{4}_{x}} \|u_{\lambda_2}\|_{L^{8}_{t}L^{4}_{x}} \|u_{\lambda_3}\|_{L^{8}_{t}L^{4}_{x}} \nonumber \\
&\lesssim T^{1/2} \sum_{\lambda _{0}\geq0}\|v_{\lambda_0}\|_{V^2_S}\cdot \left ( \left\langle\lambda_0\right\rangle^{1/2}\|u_{\lambda_0}\|_{U^2_S} \right )^3 \lesssim T^{1/2}\|v\|_{Y^0_{q', S}}  \|u\|^{3}_{X^{1/2}_{q, S}}.
\end{align}

In $\mathscr{L}^+_{h} (u,v)$,  notice that the frequency of $v_{\lambda_0}$ and $u_{\lambda_0+I_{j_1}}$ have transversality. Thus we can use bilinear estimate (Lemma \ref{V2decay2}), $L^4$ estimate (Lemma \ref{L4}) and $V^2_S$ estimate (Lemma \ref{V2toX}) to obtain that
\begin{align}
|\mathscr{L}^+_{h} (u,v)|
& \lesssim  \sum_{\lambda _{0},\:j_2\geqslant 0,j_1\approx j_3\gg 1}  \langle\lambda_0 \rangle^{1/2}\|{\overline{v}_{\lambda_0}} {\partial_x^2 u_{\lambda_0+I_{j_1}}}\|_{L^2_{x,t}}
\|{u_{\lambda_0+I_{j_2}}} \|_{L^4_{x,t}}\|{\overline{u}_{\lambda_0+I_{j_3}}} \|_{L^4_{x,t}}   \notag\\
& \lesssim \sum_{\lambda _{0},\:j_2\geqslant 0,\:j_1\approx j_3\gg 1}  \langle\lambda_0 \rangle^{1/2}\cdot\langle \lambda_0+ 2^{j_1}\rangle^2  T^{\varepsilon/4}  \left [ 2^{j_1}\langle \lambda_0+ 2^{j_1}\rangle^2 \right ]^{-1/2+\varepsilon}  \|{v_{\lambda_0}} \|_{V^2_S}\|{u_{\lambda_0+I_{j_1}}}\|_{V^2_S}\nonumber\\
&\qquad\qquad\cdot T^{\varepsilon/2}2^{(j_2+j_3)(1/4-1/q+\varepsilon)}\langle \lambda_0+ 2^{j_2}\rangle^{-3/4}\langle \lambda_0+ 2^{j_3}\rangle^{-3/4} \| u  \|^2_{X^{1/2}_{q,S}} \nonumber\\
&\lesssim  T^{3\varepsilon/4}\sum_{\lambda _{0}, \:j_2\geqslant 0,\:j_1\gg 1}\langle\lambda_0 \rangle^{1/2}\langle \lambda_0+ 2^{j_1}\rangle^{-1/4+2\varepsilon}\langle \lambda_0+ 2^{j_2}\rangle^{-3/4}\nonumber\\
&\qquad\qquad\qquad\qquad2^{j_1(1/4-2/q+2\varepsilon)+j_2(1/4-1/q+\varepsilon)}\cdot \|{v_{\lambda_0}}  \|_{V^2_S}\| u  \|_{X^{1/2}_{q,S}(\lambda_0+I_{j_1})}\| u  \|^2_{X^{1/2}_{q,S}}  \nonumber\\
&\lesssim  T^{3\varepsilon/4}\sum_{\lambda _{0}, \:j_2\geqslant 0,\:j_1\gg 1}2^{j_1(-2/q+4\varepsilon)+j_2(-1/q+\varepsilon)}\|{v_{\lambda_0}} \|_{V^2_S}\| u  \|_{X^{1/2}_{q,S}(\lambda_0+I_{j_1})}\| u  \|^2_{X^{1/2}_{q,S}}\nonumber.
\end{align}
Making the summation on $j_2$, then applying H\"older's inequality on $\lambda_0$, and finally making the summation on $j_1$, we know for $0<\varepsilon <1/2q$,
\begin{align*}
|\mathscr{L}^+_{h} (u,v)|\lesssim  T^{3\varepsilon/4}\sum_{j_1\gg 1}2^{j_1(-2/q+4\varepsilon)}\|{v} \|_{Y^0_{q',S}}\| u  \|^3_{X^{1/2}_{q,S}}\lesssim  T^{3\varepsilon/4}\|v \|_{Y^0_{q',S}}\| u  \|^3_{X^{1/2}_{q,S}}.
\end{align*}

$\mathbf{Step\:1.2.}$ $\lambda _{0}<0$. We can assume that $\lambda _{0}\ll 0$. For short, we use the following notations:
$$
\left\{
\begin{array}{l}
\lambda_k \in h_- \Leftrightarrow \lambda_k\in [\lambda_0, 3\lambda_0/4]; \\
\lambda_k \in h_+ \Leftrightarrow \lambda_k\in [-3\lambda_0/4, -\lambda_0];\\
\lambda_k \in l_- \Leftrightarrow \lambda_k\in [3\lambda_0/4, 0];\\
\lambda_k \in l_+ \Leftrightarrow \lambda_k\in [0,- 3 \lambda_0/4].
\end{array}
\right.
$$
We need to consider the following three cases $\lambda_1 \in h_-$, $\lambda_1\in l_-$ and $\lambda_1\in [0, \infty)$ separately.

{\bf Case A.} $\lambda_1\in h_-$.  We consider the following four subcases as in Table \ref{caseA}.
\begin{table}[h]
\begin{center}

\begin{tabular}{|c|c|c|c|}
\hline
 ${\rm Case\ A}$   &  $\lambda_1\in $ & $\lambda_2\in $ & $\lambda_3\in $    \\
\hline
$h_-h_-h_-$  & $ [\lambda_0, 3\lambda_0/4)$  & $ [\lambda_0, 3\lambda_0/4)$  &  $[\lambda_0,\lambda_0/2+C )$  \\
\hline
$h_-l_-l_{h-}$ & $ [\lambda_0, 3\lambda_0/4)$  & $ [3\lambda_0/4, \ 0)$ & $ [3\lambda_0/4-C, \lambda_0/4)$  \\
\hline
$h_-l_- l_{l\pm} $ & $ [\lambda_0, 3\lambda_0/4)$  & $ [3\lambda_0/4,0)$  & $ [\lambda_0/4,-\lambda_0/4+C)$\\
\hline
$h_-a_+a_+$ & $ [\lambda_0, 3\lambda_0/4)$  & $ [0, \infty)$  & $ [-C,\infty)$\\
\hline
\end{tabular}
\end{center}
\caption{$\lambda _{0}=\underset{0\leqslant k\leqslant 3}{\mathrm{min}  }\lambda _{k},\ \ \lambda_0\ll 0,\ \ \lambda_1\in h_-$.}\label{caseA}
\end{table}

{\it Case $h_-h_-h_-$.} We decompose $\lambda _k$ in the following way:
$$
\lambda_k\in \lambda_0 +[0, - \frac{\lambda_0}{4} ) = \bigcup_{j_k\geq 0} (\lambda_0 + I_{j_k}),\  k=1,2;\ \ \lambda_3 \in \lambda_0+ [0,-\frac{\lambda_0}{2} +C) = \bigcup_{j_3\geqslant 0} (\lambda_0+ I_{j_3}),
$$
where we can assume that dyadic interval $I_{j_1}$ satisfies $I_{j_1} = [2^{j-1}, 2^j) \cap [0, -\lambda_0/4)$. In the following, we will all explain it in this way. By FCC \eqref{FCC}, we see that $j_3\approx (j_1\vee j_2)$ and $0\leqslant j_k\lesssim \mathrm{ln} \left \langle \lambda _{0}  \right \rangle,\:k=1,2,3 $, we have
\begin{align*}
\mathscr{L}^-_{h_-h_-h_-} (u,v):=&\sum_{\lambda _{0}\ll 0,\:0\leqslant j_3\approx (j_1\vee j_2)\lesssim \mathrm{ln} \left \langle \lambda _{0}  \right \rangle } \left \langle \lambda _{0} \right \rangle^{1/2}  \int_{\mathbb{R}\times \left [ 0,T \right ]  }\overline{v}_{\lambda _{0}} (\partial _{x}^{2}u_{\lambda _{0}+I_{j_1}})u_{\lambda _{0}+I_{j_2}}\overline{u}_{\lambda _{0}+I_{j_3}}  dxdt\nonumber\\
\lesssim & \left ( \sum_{\lambda _{0}\ll 0,\:0\leqslant j_3\approx (j_1\vee j_2)\lesssim 1}+\sum_{\lambda _{0}\ll 0,\:1\ll j_3\approx (j_1\vee j_2)\lesssim \mathrm{ln} \left \langle \lambda _{0}  \right \rangle } \right ) \nonumber\\
&\quad\quad\left \langle \lambda _{0} \right \rangle^{1/2}  \int_{\mathbb{R}\times \left [ 0,T \right ]  }\overline{v}_{\lambda _{0}} (\partial _{x}^{2}u_{\lambda _{0}+I_{j_1}})u_{\lambda _{0}+I_{j_2}}\overline{u}_{\lambda _{0}+I_{j_3}}  dxdt\nonumber\\
:= &\mathscr{L}^{-,l}_{h_-h_-h_-} (u,v)+\mathscr{L}^{-,h}_{h_-h_-h_-} (u,v).
\end{align*}

In $\mathscr{L}^{-,l}_{h_-h_-h_-} (u,v)$, we have $\lambda _0\approx \lambda _1\approx \lambda _2\approx \lambda _3$. Using the same method as in  $\mathscr{L}^+_{l}(u,v)$ \eqref{Ll}, we can get the result and the details are omitted.

In $\mathscr{L}^{-,h}_{h_-h_-h_-} (u,v)$, we have $\langle\lambda_0 \rangle\sim \langle\lambda_1 \rangle\sim\langle\lambda_2 \rangle\sim\langle\lambda_3 \rangle$. By applying bilinear estimate \eqref{bilinear3}, $L^4$ estimate \eqref{lebesgue4a}, H\"older's inequality and Lemma \ref{V2toX}, we obtain that
\begin{align}
&|\mathscr{L}^{-,h}_{h_-h_-h_-} (u,v)|\lesssim \sum_{1\ll j_3\approx (j_1\vee j_2)\lesssim \mathrm{ln} \left \langle \lambda _{0} \right \rangle } \langle\lambda_0 \rangle^{1/2}\|{\overline{v}_{\lambda_0}\overline{u}_{\lambda_0+I_{j_3}}} \|_{L^2_{x,t}}\|{\partial_x^2 u_{\lambda_0+I_{j_1}}} \|_{L^4_{x,t}}\|{u_{\lambda_0+I_{j_2}}} \|_{L^4_{x,t}} \nonumber\\
\lesssim& \sum_{1\ll j_3\approx (j_1\vee j_2)\lesssim \mathrm{ln} \left \langle \lambda _{0} \right \rangle } \langle\lambda_0 \rangle^{1/2}\langle\lambda_0 \rangle^{2}\cdot T^{\varepsilon/4}\left [ 2^{j_3}\langle\lambda_0 \rangle^{2} \right ]^{-1/2+\varepsilon } \|{v_{\lambda_0}} \|_{V^2_S}\|{u_{\lambda_0+I_{j_3}}}\|_{V^2_S}\nonumber\\
&\qquad\qquad\qquad\qquad \cdot T^{\varepsilon/2}2^{(j_1+j_2)(1/4-1/q+\varepsilon)}\langle\lambda_0\rangle^{-3/2}\| u \|^2_{X^{1/2}_{q,S}}\nonumber\\
\lesssim&T^{3\varepsilon/4}\sum_{1\ll j_3\approx (j_1\vee j_2)\lesssim \mathrm{ln} \left \langle \lambda _{0} \right \rangle } \langle\lambda_0 \rangle^{2\varepsilon}2^{(j_1+j_2)(1/4-1/q+\varepsilon)+j_3(-1/2+\varepsilon)}\nonumber\\
&\qquad\qquad\qquad\qquad\cdot 2^{j_3(1/2-1/q)}\langle \lambda_0\rangle^{-1/2}\|{v_{\lambda_0}} \|_{V^2_S}\| u \|^3_{X^{1/2}_{q,S}}\nonumber\\
\lesssim&T^{3\varepsilon/4}\sum_{1\ll j_3\approx (j_1\vee j_2)\lesssim \mathrm{ln} \left \langle \lambda _{0} \right \rangle } \langle\lambda_0 \rangle^{-1/2+2\varepsilon }2^{(j_1+j_2)(1/4-1/q+\varepsilon)+j_3(-1/q+\varepsilon)}\|{v_{\lambda_0}}  \|_{V^2_S}\| u \|^3_{X^{1/2}_{q,S}}\nonumber\\
\lesssim&T^{3\varepsilon/4}\sum_{\lambda _{0}\ll 0} \langle\lambda_0 \rangle^{-2/q+4\varepsilon }\|{v_{\lambda_0}} \|_{V^2_S}\| u \|^3_{X^{1/2}_{q,S}}\nonumber\\
\lesssim&T^{3\varepsilon/4}\|{v_{\lambda_0}}\|_{Y^0_{q',S}}\| u \|^3_{X^{1/2}_{q,S}}.\label{pm2}
\end{align}
Here we first make the summation on $j_1,j_2,j_3$ (as long as $0<\varepsilon <1/q$), then apply H\"older's inequality on $\lambda_0$.

{\it Case $h_-l_-l_{h-}$.} By FCC \eqref{FCC}, we see that $\lambda_2 \in [ 3\lambda_0/4, \lambda_0/4+C)$. Then we have $ \langle\lambda_0\rangle\sim \langle\lambda_1\rangle \sim  \langle\lambda_2 \rangle\sim  \langle\lambda_3 \rangle$. Using the same method as in  $\mathscr{L}^{-,h}_{h_-h_-h_-}(u,v)$, we can get the result as desired.

{\it Case $h_-l_- l_{l\pm} $.} We decompose $\lambda _k$:
$$
\lambda_1\in \lambda_0 +[0, - \frac{\lambda_0}{4} ) = \bigcup_{j_1\geq 0} (\lambda_0 + I_{j_1});\ \ \lambda_2 \in [ \frac{3\lambda_0}{4}, 0) = \bigcup_{j_2\geq 0} -I_{j_2};\ \ \lambda_3 \in [ \frac{\lambda_0}{4}, -\frac{\lambda_0}{4}+C) = \bigcup_{j_3\geq 0} \pm I_{j_3}.
$$

Noticing that $\left | \lambda _{0}-\lambda _{2} \right |\geqslant \langle \lambda _{0}\rangle ,\ \ \left | \lambda _{1}-\lambda _{3} \right |\geqslant \langle \lambda _{0}\rangle $, so we can use twice bilinear estimate \eqref{bilinear4} and Lemma \ref{V2toX} to obtain that
\begin{align}
\mathscr{L}^{-}_{h_-l_- l_{l\pm}} (u,v)&:=\sum_{\lambda _{0}\ll 0,\:0\leqslant j_1,j_2,j_3\lesssim \mathrm{ln} \left \langle \lambda _{0} \right \rangle } \langle \lambda_0 \rangle^{\frac{1}{2}} \int_{\mathbb{R}\times [0,T]} \overline{v}_{\lambda_0} \partial_x^2 u_{\lambda_0+I_{j_1}} u_{-I_{j_2}}\overline{u}_{\pm I_{j_3}}dxdt\nonumber\\
&\lesssim\sum_{0\leqslant j_1,j_2,j_3\lesssim \mathrm{ln} \left \langle \lambda _{0} \right \rangle } \langle\lambda_0 \rangle^{1/2}\|{\overline{v}_{\lambda_0}u_{-I_{j_2}}} \|_{L^2_{x,t}}\|{(\partial_x^2 u_{\lambda_0+I_{j_1}})\overline{u}_{\pm I_{j_3}}} \|_{L^2_{x,t}} \nonumber\\
&\lesssim T^{\varepsilon/2}\sum_{0\leqslant j_1,j_2,j_3\lesssim \mathrm{ln}\langle \lambda _{0}\rangle } \langle\lambda_0 \rangle^{5/2}\left[\langle \lambda _{0}\rangle^3\right]^{-1/2+\varepsilon }2^{j_2(-1/q)}\nonumber\\
&\qquad\qquad\qquad\cdot\left[\langle \lambda _{0} \rangle ^3 \right ]^{-1/2+\varepsilon }
2^{j_1(1/2-1/q)+j_3(-1/q)}\langle \lambda _{0}\rangle ^{-1/2} \|{v_{\lambda_0}}\|_{V^2_S}\| u \|^3_{X^{1/2}_{q,S}} \nonumber\\
&\lesssim T^{\varepsilon/2}\sum_{0\leqslant j_1,j_2,j_3\lesssim \mathrm{ln} \left \langle \lambda _{0} \right \rangle }\left \langle \lambda _{0}  \right \rangle^{-1+6\varepsilon }2^{j_1(1/2-1/q)+(j_2+j_3)(-1/q)}\|{v_{\lambda_0}} \|_{V^2_S}\| u \|^3_{X^{1/2}_{q,S}}  \nonumber\\
&\lesssim T^{\varepsilon/2}\sum_{\lambda _{0}\ll 0}\left \langle \lambda _{0}  \right \rangle^{-1/2-1/q+6\varepsilon }\|{v_{\lambda_0}} \|_{V^2_S}\| u \|^3_{X^{1/2}_{q,S}} \nonumber\\
&\lesssim T^{\varepsilon/2}\|{v_{\lambda_0}} \|_{Y^0_{q',S}}\| u \|^3_{X^{1/2}_{q,S}}\label{pm4}.
\end{align}
Here the summations on $j_2,j_3$ are convergent to a constant, the summation on $j_1$ is controlled by $\langle \lambda _{0}\rangle^{1/2-1/q}$, and we finally use  H\"older's inequality on $\lambda_0$.

{\it Case $h_-a_+a_+$.} We decompose $\lambda _k$:\
\begin{align*}
&\lambda_1\in \lambda_0 +[0, - \frac{\lambda_0}{4} ) = \bigcup_{j_1\geqslant  0} (\lambda_0 + I_{j_1});\ \ \lambda_2 \in [ 0, \infty ) = \bigcup_{j_2\geqslant 0}I_{j_2};\\
&\lambda_3 \in [ -C,\infty) = \bigcup_{j_3\geqslant -1}I_{j_3},\ \ I_{-1}=[-C,0).
\end{align*}
In this case we have $\left | \lambda _{0}-\lambda _{2} \right |\gtrsim  \left \langle \lambda _{0} \right \rangle  ,\ \ \left | \lambda _{1}-\lambda _{3} \right |\gtrsim  \left \langle \lambda _{0} \right \rangle $ and $j_1\lesssim \mathrm{ln} \left \langle \lambda _{0} \right \rangle $. Noticing that we only use $j_1\lesssim \mathrm{ln} \left \langle \lambda _{0} \right \rangle $ and do not use $j_2,j_3\lesssim \mathrm{ln} \left \langle \lambda _{0} \right \rangle $ in \eqref{pm4}, so we can use the same method as in \eqref{pm4} to estimate this case and the details are omitted.

{\bf Case B.} $\lambda_1\in l_-$. We need to consider the following subcases as in Table \ref{caseB}.
\begin{table}[h]
\begin{center}

\begin{tabular}{|c|c|c|c|}
\hline
${\rm Case\ B}$   &  $\lambda_1\in $ & $\lambda_2\in $ & $\lambda_3\in $    \\
\hline
$l_-h_-l_{h-}$  & $ [3\lambda_0/4,0 )$  & $ [\lambda_0, 3\lambda_0/4)$  &  $[3\lambda_0/4-C,\lambda_0/4 )$  \\
\hline
$l_-h_-l_{l\pm}$ & $ [3\lambda_0/4,0 )$  & $ [\lambda_0,3\lambda_0/4)$ & $ [\lambda_0/4, -\lambda_0/4+C)$  \\
\hline
$l_-l_{h-}l_{h-}$ & $ [3\lambda_0/4,0 )$  & $ [3\lambda_0/4,\lambda_0/4)$  & $ [\lambda_0/2-C,\lambda_0/8)$\\
\hline
$l_-l_{h-}l_{l-}$ & $ [3\lambda_0/4,0 )$  & $ [3\lambda_0/4,\lambda_0/4)$  & $ [\lambda_0/8,0)$\\
\hline
$l_-l_{h-}l_+$ & $ [3\lambda_0/4,0 )$  & $ [3\lambda_0/4,\lambda_0/4)$  & $ [0,-3\lambda_0/4+C)$\\
\hline
$l_-l_{l-}l_+$ & $ [3\lambda_0/4,0 )$  & $ [\lambda_0/4,0)$  & $ [-C,-\lambda_0/2)$\\
\hline
$l_-l_{l-}h_+$ & $ [3\lambda_0/4,0 )$  & $ [\lambda_0/4,0)$  & $ [-\lambda_0/2,-\lambda_0+C)$\\
\hline
$l_-a_+a_+$ & $ [3\lambda_0/4,0 )$  & $ [0,\infty )$  & $ [-\lambda_0/4-C,\infty)$\\
\hline
\end{tabular}
\end{center}
\caption{$\lambda_0\ll 0$, $\lambda_1\in l_-$, dividing the range of $\lambda_2$.}\label{caseB}
\end{table}

For {\it Case $l_-h_-l_{h-}$} and {\it Case $l_-h_-l_{l\pm}$}, we can use the same method as for {\it Case $h_-l_-l_{h-}$} and {\it Case $h_-l_-l_{l\pm}$} in {\it Case $A$}, respectively. In fact, these two cases are easier because the derivative at low frequency is simpler than at high frequency, so we omit it.

{\it Case $l_-l_{h-}l_{h-}$.} By FCC \eqref{FCC}, we see that $\lambda_1<3\lambda_0/8+C$. Then we have $ \langle \lambda _k\rangle \sim \langle \lambda_0 \rangle$ for $k=1,2,3$.  We decompose $\lambda_k$:
\begin{align*}
&\lambda_1\in \frac{3\lambda_0}{4}  +[0, - \frac{3\lambda_0}{8}+C ) = \bigcup_{j_1\geqslant  0} (\frac{3\lambda_0}{4} + I_{j_1});\ \ \lambda_2 \in \frac{3\lambda_0}{4}  +[0 , -\frac{\lambda_0}{2}) = \bigcup_{j_2\geqslant 0}(\frac{3\lambda_0}{4}+I_{j_2});\\
&\lambda_3 \in \frac{\lambda_0}{2}+[-C, -\frac{3\lambda_0}{8} )= \bigcup_{j_3\geqslant -1}(\frac{\lambda_0}{2}+I_{j_3}),\ \ I_{-1}=[-C,0).
\end{align*}
Noticing that $\left | \lambda _{0}-\lambda _{2} \right |\gtrsim  \left \langle \lambda _{0} \right \rangle$ and $j_1,j_2,j_3\lesssim  \mathrm{ln}\langle \lambda _{0} \rangle $. It follows from the bilinear estimate \eqref{bilinear4}, $L^4$ estimates \eqref{lebesgue4a} and Lemma \ref{V2toX} that
\begin{align*}
\mathscr{I}^{-}_{l_-l_{h-}l_{h-}} (u,v):=&\sum_{\lambda _{0}\ll 0,\:j_1,j_2,j_3\lesssim \mathrm{ln} \left \langle \lambda _{0} \right \rangle } \langle\lambda_0 \rangle^{1/2}\|{\overline{v}_{\lambda_0}u_{3\lambda_0/4+I_{j_2}}} \|_{L^2_{x,t}}\|{\partial_x^2 u_{3\lambda_0/4+I_{j_1}}} \|_{L^4_{x,t}}\|{\overline{u}_{\lambda_0/2+I_{j_3}}} \|_{L^4_{x,t}}  \nonumber\\
\lesssim& T^{3\varepsilon/4}\sum_{\lambda _{0}\ll 0,\:j_1,j_2,j_3\lesssim \mathrm{ln} \left \langle \lambda _{0} \right \rangle } \langle\lambda_0 \rangle^{1/2}\cdot \left [ \langle\lambda_0 \rangle^3 \right ]^{-1/2+\varepsilon }\cdot 2^{j_2(1/2-1/q)}\langle\lambda_0 \rangle^{-1/2}  \nonumber\\
&\quad\quad\cdot \langle\lambda_0 \rangle^2 2^{(j_1+j_3)(1/4-1/q+\varepsilon)}\langle\lambda_0 \rangle^{-3/4}\langle\lambda_0 \rangle^{-3/4}\|{v_{\lambda_0}} \|_{V^2_S}\| u \|^3_{X^{1/2}_{q,S}} \nonumber\\
\lesssim& T^{3\varepsilon/4}\sum_{\lambda _{0}\ll 0} \langle\lambda_0 \rangle^{-3/q+5\varepsilon }\|{v_{\lambda_0}}\|_{V^2_S}\| u \|^3_{X^{1/2}_{q,S}}  \nonumber\\
\lesssim& T^{3\varepsilon/4}\|v\|_{Y^0_{q',S}}\| u \|^3_{X^{1/2}_{q,S}}.
\end{align*}

{\it Case $l_-l_{h-}l_{l-}$.} By FCC \eqref{FCC}, we see that $\lambda_1<\lambda_0/4+C$. We decompose $\lambda_k$:
\begin{align*}
&\lambda_1\in \frac{3\lambda_0}{4}  +[0, - \frac{\lambda_0}{2}+C ) = \bigcup_{j_1\geqslant  0} (\frac{3\lambda_0}{4} + I_{j_1});\ \ \lambda_2 \in \frac{\lambda_0}{4}  +[\frac{\lambda_0}{2} , 0) = \bigcup_{j_2\geqslant 0}(\frac{\lambda_0}{4}-I_{j_2});\\
&\lambda_3 \in [\frac{\lambda_0}{8}, 0 )= \bigcup_{j_3\geqslant 0}-I_{j_3}.
\end{align*}
Noticing that $\left | \lambda _{0}-\lambda _{1} \right |,\ \left | \lambda _{2}-\lambda _{3} \right |\gtrsim  \left \langle \lambda _{0} \right \rangle$ and $j_1,j_2,j_3\lesssim  {\rm ln}\left \langle \lambda _{0} \right \rangle $. It follows from the bilinear estimate \eqref{bilinear4} and Lemma \ref{V2toX} that
\begin{align*}
\mathscr{I}^{-}_{l_-l_{h-}l_{l-}} (u,v)\lesssim
&\sum_{\lambda _{0}\ll 0,\:j_1,j_2,j_3\lesssim \mathrm{ln} \left \langle \lambda _{0} \right \rangle } \langle\lambda_0 \rangle^{1/2}\|{\overline{v}_{\lambda_0}\partial_x^2 u_{3\lambda_0/4+I_{j_1}}} \|_{L^2_{x,t}}\|{u_{\lambda_0/4-I_{j_2}}\overline{u}_{-I_{j_3}}} \|_{L^2_{x,t}} \nonumber\\
\lesssim& T^{\varepsilon/2}\sum_{\lambda _{0}\ll 0,\:j_1,j_2,j_3\lesssim \mathrm{ln} \left \langle \lambda _{0} \right \rangle } \langle\lambda_0 \rangle^{1/2}\langle\lambda_0 \rangle^2\cdot \left [ \langle\lambda_0 \rangle^3 \right ]^{-1/2+\varepsilon }\cdot 2^{j_1(1/2-1/q)}\langle\lambda_0 \rangle^{-1/2}\nonumber\\
&\quad\quad\cdot \left [ \langle\lambda_0 \rangle^3 \right ]^{-1/2+\varepsilon } 2^{j_2(1/2-1/q)+j_3(-1/q)}\langle\lambda_0 \rangle^{-1/2}\|{v_{\lambda_0}} \|_{V^2_S}\| u \|^3_{X^{1/2}_{q,S}}\nonumber\\
\lesssim& T^{\varepsilon/2}\sum_{\lambda _{0}\ll 0} \langle\lambda_0 \rangle^{-1/2-2/q+6\varepsilon }\|{v_{\lambda_0}}\|_{V^2_S}\| u \|^3_{X^{1/2}_{q,S}} \nonumber\\
\lesssim& T^{\varepsilon/2}\|v\|_{Y^0_{q',S}}\| u \|^3_{X^{1/2}_{q,S}},
\end{align*}
where we use the fact $\langle \lambda _{1}\rangle\sim \langle \lambda _{2}\rangle\sim\langle \lambda _{0}\rangle$.

{\it Case $l_-l_{h-}l_+$.} We collect $\lambda _k$ in the following dyadic version:
\begin{align*}
&\lambda_1\in [\frac{3\lambda_0}{4}, 0) = \bigcup_{j_1\geqslant  0} - I_{j_1};\ \ \lambda_2 \in \lambda_0+[-\frac{\lambda_0}{4} ,-\frac{3\lambda_0}{4}) = \bigcup_{j_2\lesssim \mathrm{ln}\left \langle \lambda _{0}  \right \rangle  }(\lambda_0+I_{j_2});\\
&\lambda_3 \in [0, -\frac{3\lambda_0}{4}+C )= \bigcup_{j_3\geqslant 0}I_{j_3}.
\end{align*}
Notice that the frequency of $(\lambda_0,\lambda_1)$ and $(\lambda_2,\lambda_3)$ have  transversality and $j_1,j_2,j_3\lesssim  \ln \left \langle \lambda _{0} \right \rangle $. It follows from the bilinear estimate \eqref{bilinear4} and Lemma \ref{V2toX} that
\begin{align}\label{lll}
\mathscr{I}^{-}_{l_-l_{h-}l_+} (u,v)
\lesssim&\sum_{\lambda _{0}\ll 0,\:j_1,j_2,j_3\lesssim \mathrm{ln} \left \langle \lambda _{0} \right \rangle } \langle\lambda_0 \rangle^{1/2}\|{\overline{v}_{\lambda_0}\partial_x^2 u_{-I_{j_1}}} \|_{L^2_{x,t}}\|{u_{\lambda_0+I_{j_2}}\overline{u}_{I_{j_3}}} \|_{L^2_{x,t}}\nonumber\\
\lesssim&T^{\varepsilon/2}\sum_{\lambda _{0}\ll 0,\:j_1,j_2,j_3\lesssim \mathrm{ln} \left \langle \lambda _{0} \right \rangle } \langle\lambda_0 \rangle^{1/2}\cdot 2^{2j_1}\left [ \langle\lambda_0 \rangle^3 \right ]^{-1/2+\varepsilon }2^{j_1(-1/q)}   \nonumber\\
&\quad\quad\cdot \left [ \langle\lambda_0 \rangle^3 \right ]^{-1/2+\varepsilon }2^{j_2(1/2-1/q)+j_3(-1/q)}\langle\lambda_0 \rangle^{-1/2}\|{v_{\lambda_0}} \|_{V^2_S}\| u \|^3_{X^{1/2}_{q,S}} \nonumber\\
\lesssim&T^{\varepsilon/2}\sum_{\lambda _{0}\ll 0} \langle\lambda_0 \rangle^{-1/2-2/q+6\varepsilon }\|{v_{\lambda_0}}\|_{V^2_S}\| u \|^3_{X^{1/2}_{q,S}}\nonumber\\
\lesssim& T^{\varepsilon/2}\|v\|_{Y^0_{q',S}}\| u \|^3_{X^{1/2}_{q,S}}.
\end{align}

{\it Case $l_-l_{l-}l_+$.} By FCC \eqref{FCC}, we see that $\lambda_1<\lambda_0/4+C$. We decompose $\lambda_k$:
\begin{align*}
&\lambda_1\in \lambda_0+[-\frac{\lambda_0}{4}, -\frac{3\lambda_0}{4}+C) = \bigcup_{j_1\lesssim \mathrm{ln}\left \langle \lambda _{0}  \right \rangle  } (\lambda_0+ I_{j_1});\ \ \lambda_2 \in [\frac{\lambda_0}{4} ,0) = \bigcup_{j_2\geqslant 0}(-I_{j_2});\\
&\lambda_3 \in [-C, -\frac{\lambda_0}{2} )= \bigcup_{j_3\geq -1}I_{j_3}.
\end{align*}
Notice that the frequency of $(\lambda_0,\lambda_2)$ and $(\lambda_1,\lambda_3)$ have  transversality and $j_1,j_2,j_3\lesssim  \ln \left \langle \lambda _{0} \right \rangle $. It follows from the bilinear estimate \eqref{bilinear4} and Lemma \ref{V2toX} that
\begin{align*}
\mathscr{I}^{-}_{l_-l_{l-}l_+} (u,v):=& \sum_{\lambda _{0}\ll 0,\:j_1,j_2,j_3\lesssim \mathrm{ln} \left \langle \lambda _{0} \right \rangle } \left \langle \lambda _{0} \right \rangle^{1/2}  \int_{\mathbb{R}\times \left [ 0,T \right ]  }\left |\overline{v}_{\lambda _{0}} (\partial _{x}^{2}u_{\lambda _{0}+I_{j_1}})u_{-I_{j_2}}\overline{u}_{I_{j_3}} \right |dxdt \nonumber\\
\lesssim&\sum_{\lambda _{0}\ll 0,\:j_1,j_2,j_3\lesssim \mathrm{ln} \left \langle \lambda _{0} \right \rangle } \langle\lambda_0 \rangle^{1/2}\|{\overline{v}_{\lambda_0} u_{-I_{j_2}}} \|_{L^2_{x,t}}\|{(\partial_x^2u_{\lambda_0+I_{j_1}})\overline{u}_{I_{j_3}}} \|_{L^2_{x,t}} \nonumber\\
\lesssim&T^{\varepsilon/2}\sum_{\lambda _{0}\ll 0,\:j_1,j_2,j_3\lesssim \mathrm{ln} \left \langle \lambda _{0} \right \rangle } \langle\lambda_0 \rangle^{1/2}\cdot \left [ \langle\lambda_0 \rangle^3 \right ]^{-1/2+\varepsilon }2^{j_2(-1/q)}\nonumber\\
&\quad\quad\cdot \langle\lambda_0 \rangle^2\left [ \langle\lambda_0 \rangle^3 \right ]^{-1/2+\varepsilon }2^{j_1(1/2-1/q)+j_3(-1/q)}\langle\lambda_0 \rangle^{-1/2}\|{v_{\lambda_0}} \|_{V^2_S}\| u \|^3_{X^{1/2}_{q,S}}\nonumber\\
\lesssim&T^{\varepsilon/2}\sum_{\lambda _{0}\ll 0} \langle\lambda_0 \rangle^{-1/2-1/q+6\varepsilon }\|{v_{\lambda_0}}\|_{V^2_S}\| u \|^3_{X^{1/2}_{q,S}}\nonumber\\
\lesssim& T^{\varepsilon/2}\|v\|_{Y^0_{q',S}}\| u \|^3_{X^{1/2}_{q,S}},
\end{align*}
where we use the same summation order as in \eqref{pm4}.

{\it Case $l_-l_{l-}h_+$.} We decompose $\lambda_k$ in the following way:
\begin{align*}
&\lambda_1\in [\frac{3\lambda_0}{4},0) = \bigcup_{j_1\geqslant 0} -I_{j_1};\ \ \lambda_2 \in [\frac{\lambda_0}{4} ,0) = \bigcup_{j_2\geqslant 0}-I_{j_2};\\
&\lambda_3 \in -\lambda_0+[\frac{\lambda_0}{2} ,C) = \bigcup_{j_3\geqslant -1}(-\lambda_0-I_{j_3}),\ \ I_{-1}=[-C,0).
\end{align*}
Notice that the frequency of $(\lambda_0,\lambda_1$) and $(\lambda_2,\lambda_3)$ have  transversality, and $j_1,j_2,j_3\lesssim  \ln \left \langle \lambda _{0} \right \rangle $.  Using a similar way as in \eqref{lll}, we get that
\begin{align*}
\mathscr{I}^{-}_{l_-l_-h_+} (u,v):=& \sum_{\lambda _{0}\ll 0,\:j_1,j_2,j_3\lesssim \mathrm{ln} \left \langle \lambda _{0} \right \rangle } \left \langle \lambda _{0} \right \rangle^{1/2}  \int_{\mathbb{R}\times \left [ 0,T \right ]  }\left |\overline{v}_{\lambda _{0}} (\partial _{x}^{2}u_{-I_{j_1}})u_{-I_{j_2}}\overline{u}_{-\lambda _{0}-I_{j_3}} \right |dxdt \nonumber\\
\lesssim&\sum_{\lambda _{0}\ll 0,\:j_1,j_2,j_3\lesssim \mathrm{ln} \left \langle \lambda _{0} \right \rangle } \langle\lambda_0 \rangle^{1/2}\|{\overline{v}_{\lambda_0} \partial_x^2u_{-I_{j_1}}} \|_{L^2_{x,t}}\|{u_{-I_{j_2}}\overline{u}_{-\lambda_0-I_{j_3}}} \|_{L^2_{x,t}} \nonumber\\
\lesssim&T^{\varepsilon/2}\sum_{\lambda _{0}\ll 0,\:j_1,j_2,j_3\lesssim \mathrm{ln} \left \langle \lambda _{0} \right \rangle } \langle\lambda_0 \rangle^{1/2}\cdot 2^{2j_1}\left [ \langle\lambda_0 \rangle^3 \right ]^{-1/2+\varepsilon }2^{j_1(-1/q)}\nonumber\\
&\quad\quad \cdot\left [ \langle\lambda_0 \rangle^3 \right ]^{-1/2+\varepsilon }2^{j_2(-1/q)+j_3(1/2-1/q)}\langle\lambda_0 \rangle^{-1/2}\|{v_{\lambda_0}}\|_{V^2_S}\| u \|^3_{X^{1/2}_{q,S}}\nonumber\\
\lesssim&T^{\varepsilon/2}\sum_{\lambda _{0}\ll 0} \langle\lambda_0 \rangle^{-1/2-2/q+6\varepsilon }\|{v_{\lambda_0}}\|_{V^2_S}\| u \|^3_{X^{1/2}_{q,S}}\nonumber\\
\lesssim& T^{\varepsilon/2}\|v\|_{Y^0_{q',S}}\| u \|^3_{X^{1/2}_{q,S}}.
\end{align*}

{\it Case $l_-a_+a_+$.} We decompose $\lambda_k$ in the following way:
\begin{align*}
&\lambda_1\in \frac{3\lambda_0}{4}+[0, -\frac{3\lambda_0}{4}) = \bigcup_{j_1\geqslant 0} (\frac{3\lambda_0}{4}+ I_{j_1});\ \ \lambda_2 \in [0,\infty ) = \bigcup_{j_2\geqslant 0}I_{j_2};\\
&\lambda_3 \in -\frac{\lambda_0}{4}+[-C, \infty )= \bigcup_{j_3\geqslant -1}(-\frac{\lambda_0}{4}+I_{j_3}),\ \ I_{-1}=[-C,0).
\end{align*}
Notice that the frequency of $(\lambda_0,\lambda_2)$ and $(\lambda_1,\lambda_3)$ have  transversality and $j_1\lesssim {\rm ln} \left \langle \lambda _{0} \right \rangle $. Using the embedding $\|{u_{3\lambda _{0}/4+I_{j_1}}}\|_{V^2_S}\lesssim \| u \|_{X^{1/2}_{q,S}}$ for any fixed $j_1$, it follows from the bilinear estimate \eqref{bilinear4} and Lemma \ref{V2toX} that
\begin{align*}
\mathscr{I}^{-}_{l_-a_+a_+} (u,v):=& \sum_{j_1\lesssim \mathrm{ln} \left \langle \lambda _{0} \right \rangle ,j_2,j_3} \left \langle \lambda _{0} \right \rangle^{1/2}  \int_{\mathbb{R}\times \left [ 0,T \right ]  }\left |\overline{v}_{\lambda _{0}} (\partial _{x}^{2}u_{3\lambda _{0}/4+I_{j_1}})u_{I_{j_2}}\overline{u}_{-\lambda _{0}/4+I_{j_3}} \right |dxdt \nonumber\\
\lesssim&\sum_{j_1\lesssim \mathrm{ln} \left \langle \lambda _{0} \right \rangle ,j_2,j_3} \langle\lambda_0 \rangle^{1/2}\|{\overline{v}_{\lambda_0}u_{I_{j_2}}} \|_{L^2_{x,t}}\|{ (\partial_x^2u_{3\lambda_0/4+I_{j_1}})\overline{u}_{-\lambda_0/4+I_{j_3}}} \|_{L^2_{x,t}}\nonumber\\
\lesssim&T^{\varepsilon/2}\sum_{j_1\lesssim \mathrm{ln} \left \langle \lambda _{0} \right \rangle ,j_2,j_3} \langle\lambda_0 \rangle^{5/2}\cdot \left [ \langle\lambda_0 \rangle^3 \right ]^{-1/2+\varepsilon }2^{j_2(-1/q)}\left [ \langle\lambda_0 \rangle (\langle\lambda_0 \rangle+2^{j_3}\right )^2]^{-1/2+\varepsilon }\nonumber\\
&\qquad\qquad\qquad \cdot2^{j_3(1/2-1/q)}(\left \langle \lambda_0 \right \rangle+2^{j_3} )^{-1/2}\|{v_{\lambda_0}}\|_{V^2_S}\| u \|^3_{X^{1/2}_{q,S}}\nonumber\\
\lesssim&T^{\varepsilon/2}\sum_{j_1\lesssim \mathrm{ln} \left \langle \lambda _{0} \right \rangle,j_3}\langle\lambda_0 \rangle^{1/2+4\varepsilon }(\left \langle \lambda_0 \right \rangle+2^{j_3} )^{-3/2+2\varepsilon }2^{j_3(1/2-1/q)}\|{v_{\lambda_0}}\|_{V^2_S}\| u \|^3_{X^{1/2}_{q,S}},
\end{align*}
Here we make the summation on $j_2$. Then one can use the inequalities $$(\langle \lambda _0  \rangle+2^{j_3} )^{-3/2+2\varepsilon }\lesssim \langle \lambda _0  \rangle ^{-1-1/q+3\varepsilon }2^{j_3(-1/2+1/q-\varepsilon)}, \sum_{j_1\lesssim \mathrm{ln} \langle \lambda _{0}\rangle}1\lesssim \langle \lambda _0  \rangle ^{\varepsilon},$$ and H\"older's inequality on $\lambda_0$ to obtain that
\begin{align*}
\mathscr{I}^{-}_{l_-a_+a_+} (u,v)\lesssim&T^{\varepsilon/2}\sum_{\lambda _{0}\ll 0,j_3\geq -1} \langle\lambda_0 \rangle^{-1/2-1/q+8\varepsilon }2^{j_3(-\varepsilon)}\|{v_{\lambda_0}}\|_{V^2_S}\| u \|^3_{X^{1/2}_{q,S}}\nonumber\\
\lesssim& T^{\varepsilon/2}\|v\|_{Y^0_{q',S}}\| u \|^3_{X^{1/2}_{q,S}}.
\end{align*}

{\bf Case C.} $\lambda_1\in [0,\infty)$. We need to consider the following subcases as in Table \ref{caseC}.
\begin{table}[h]
\begin{center}

\begin{tabular}{|c|c|c|c|}
\hline
${\rm Case\ C}$   &  $\lambda_1\in $ & $\lambda_2\in $ & $\lambda_3\in $    \\
\hline
$a_+h_-a_+$  & $ [0,\infty )$  & $ [\lambda_0, \lambda_0/2)$  &  $[-C,\infty )$  \\
\hline
$a_+l_-a_+$ & $ [0,\infty )$  & $ [\lambda_0/2,0)$ & $ [-\lambda_0/2-C, \infty)$  \\
\hline
$l_+a_+a_+$ & $ [0,-\lambda_0/2)$  & $ [0,\infty )$  & $ [-\lambda_0-C,\infty)$\\
\hline
$a_+l_+a_+$ & $ [-\lambda_0/2,\infty)$  & $ [0,-\lambda_0/2)$  & $ [-3\lambda_0/2-C,\infty)$\\
\hline
$a_+a_+a_+$ & $ [-\lambda_0/2,\infty)$  & $ [-\lambda_0/2,\infty)$  & $ [-2\lambda_0-C,\infty)$\\
\hline
\end{tabular}
\end{center}
\caption{$\lambda _{0}=\underset{0\leqslant k\leqslant 3}{\mathrm{min}  }\lambda _{k},\ \ \lambda_0\ll 0,\ \ \lambda_1\in [0,\infty)$.}\label{caseC}
\end{table}

{\it Case $a_+h_-a_+$.} We decompose $\lambda_k$ in the following way:
\begin{align*}
&\lambda_1\in [0, \infty ) = \bigcup_{j_1\geqslant 0} I_{j_1};\ \ \lambda_2 \in \lambda_0+[0,-\frac{\lambda_0}{2}) = \bigcup_{j_2\geqslant 0}(\lambda_0+I_{j_2});\\
&\lambda_3 \in [-C, \infty )= \bigcup_{j_3\geqslant -1}I_{j_3},\ \ I_{-1}=[-C,0).
\end{align*}

Notice that the frequency of $(\lambda_0,\lambda_1)$ and $(\lambda_2,\lambda_3)$ have  transversality, $j_3\approx (j_1\vee j_2) $ by FCC \eqref{FCC} and $j_2\lesssim \mathrm{ln} \left \langle \lambda _{0} \right \rangle$. It follows from the bilinear estimate \eqref{bilinear4} and Lemma \ref{V2toX} that
\begin{align}\label{aha}
\mathscr{X}^{-}_{a_+h_-a_+} (u,v):= &\sum_{\lambda _{0}\ll 0,\:j_3\approx (j_1\vee j_2),\ j_2\lesssim \mathrm{ln} \left \langle \lambda _{0} \right \rangle} \left \langle \lambda _{0} \right \rangle^{1/2}  \int_{\mathbb{R}\times \left [ 0,T \right ]  }\left |\overline{v}_{\lambda _{0}} (\partial _{x}^{2}u_{I_{j_1}})u_{\lambda _{0}+I_{j_2}}\overline{u}_{I_{j_3}} \right |dxdt \nonumber\\
\lesssim&\sum_{\lambda _{0}\ll 0,\:j_3\approx (j_1\vee j_2),\ j_2\lesssim \mathrm{ln} \left \langle \lambda _{0} \right \rangle} \left \langle \lambda _{0} \right \rangle^{1/2}\|{\overline{v}_{\lambda_0}\partial_x^2u_{I_{j_1}}} \|_{L^2_{x,t}}\|{ u_{\lambda_0+I_{j_2}}\overline{u}_{I_{j_3}}} \|_{L^2_{x,t}}\nonumber\\
\lesssim&T^{\varepsilon/2}\sum_{\lambda _{0}\ll 0,\:j_3\approx (j_1\vee j_2),\ j_2\lesssim \mathrm{ln} \left \langle \lambda _{0} \right \rangle} \left \langle \lambda _{0} \right \rangle^{1/2}\cdot 2^{2j_1}\left [ \langle\lambda_0 \rangle\cdot  2^{2j_1} \right ] ^{-1/2+\varepsilon }2^{j_1(-1/q)}\nonumber\\
&\quad\quad \cdot  \left [ \langle\lambda_0 \rangle \cdot 2^{2j_3} \right ] ^{-1/2+\varepsilon }2^{j_2(1/2-1/q)+j_3(-1/q)}\langle\lambda_0 \rangle^{-1/2}\|{v_{\lambda_0}}\|_{V^2_S}\| u \|^3_{X^{1/2}_{q,S}}\nonumber\\
\lesssim&T^{\varepsilon/2}\sum_{\lambda _{0}\ll 0,\:j_3} \left \langle \lambda _{0} \right \rangle^{-1/2-1/q+2\varepsilon }2^{j_3(-2/q+4\varepsilon )}\|{v_{\lambda_0}}\|_{V^2_S}\| u \|^3_{X^{1/2}_{q,S}}\nonumber\\
\lesssim& T^{\varepsilon/2}\|v\|_{Y^0_{q',S}}\| u \|^3_{X^{1/2}_{q,S}}.
\end{align}
Here we first use $j_1\lesssim j_3,\ \ j_2\lesssim \mathrm{ln} \left \langle \lambda _{0} \right \rangle$ to make the summation on $j_1,j_2$, then make the summation on $j_3$ for $0<\varepsilon <1/2q$, finally apply H\"older's inequality on $\lambda_0$.

{\it Case $a_+l_-a_+$.} We decompose $\lambda_k$ in the following way:
\begin{align*}
&\lambda_1\in [0, \infty ) = \bigcup_{j_1\geqslant 0} I_{j_1};\ \ \lambda_2 \in \lambda_0+[-\frac{\lambda_0}{2},-\lambda_0) = \bigcup_{j_2\lesssim \mathrm{ln} \left \langle \lambda _{0}\right \rangle}(\lambda_0+I_{j_2});\\
&\lambda_3 \in [-\frac{\lambda_0}{2}-C, \infty )= \bigcup_{j_3\gtrsim  \mathrm{ln} \left \langle \lambda _{0} \right \rangle}I_{j_3}.
\end{align*}

Notice that the frequency of $(\lambda_0,\lambda_1$) and $(\lambda_2,\lambda_3)$ have transversality, $j_3\approx (j_1\vee j_2) $ by FCC \eqref{FCC} and $j_2\lesssim  \mathrm{ln} \left \langle \lambda _{0} \right \rangle$. Using the embedding $\|{u_{\lambda _{0}+I_{j_2}}}\|_{V^2_S}\lesssim \| u \|_{X^{1/2}_{q,S}}$ for any fixed $j_2$, it follows from the bilinear estimate \eqref{bilinear4} and Lemma \ref{V2toX} that
\begin{align*}
\mathscr{X}^{-}_{a_+l_-a_+} (u,v):= &\sum_{\lambda _{0}\ll 0,\:j_3\approx (j_1\vee j_2)\gtrsim \mathrm{ln} \left \langle \lambda _{0} \right \rangle} \left \langle \lambda _{0} \right \rangle^{1/2}  \int_{\mathbb{R}\times \left [ 0,T \right ]  }\left |\overline{v}_{\lambda _{0}} (\partial _{x}^{2}u_{I_{j_1}})u_{\lambda _{0}+I_{j_2}}\overline{u}_{I_{j_3}} \right |dxdt \nonumber\\
\lesssim&\sum_{\lambda _{0}\ll 0,\:j_3\approx (j_1\vee j_2)\gtrsim \mathrm{ln} \left \langle \lambda _{0} \right \rangle} \left \langle \lambda _{0} \right \rangle^{1/2}\|{\overline{v}_{\lambda_0}\partial_x^2u_{I_{j_1}}} \|_{L^2_{x,t}}\|{ u_{\lambda_0+I_{j_2}}\overline{u}_{I_{j_3}}} \|_{L^2_{x,t}}\nonumber\\
\lesssim&T^{\varepsilon/2}\sum_{\lambda _{0}\ll 0,\:j_3\approx (j_1\vee j_2)\gtrsim \mathrm{ln} \left \langle \lambda _{0} \right \rangle} \left \langle \lambda _{0} \right \rangle^{1/2}\cdot 2^{2j_1}\left [( \langle\lambda_0 \rangle+2^{j_1})\cdot  2^{2j_1} \right ] ^{-1/2+\varepsilon }2^{j_1(-1/q)}  \nonumber\\
&\quad\quad \cdot  \left [ \langle\lambda_0 \rangle \cdot 2^{2j_3} \right ] ^{-1/2+\varepsilon }2^{j_3(-1/q)}\|{v_{\lambda_0}}\|_{V^2_S}\| u \|^3_{X^{1/2}_{q,S}}  \nonumber\\
\lesssim&T^{\varepsilon/2}\sum_{\lambda _{0}\ll 0,\:j_3\gtrsim \mathrm{ln} \left \langle \lambda _{0} \right \rangle} \left \langle \lambda _{0} \right \rangle^{2\varepsilon} 2^{j_3(-1/2-2/q+5\varepsilon )}\|{v_{\lambda_0}}\|_{V^2_S}\| u \|^3_{X^{1/2}_{q,S}} \nonumber\\
\lesssim& T^{\varepsilon/2}\|v\|_{Y^0_{q',S}}\| u \|^3_{X^{1/2}_{q,S}},
\end{align*}
where we use the same summation order as in \eqref{aha}.

{\it Case $l_+a_+a_+$.} We collect $\lambda_k$ in the following dyadic version:
\begin{align*}
&\lambda_1\in [0,-\frac{\lambda_0}{2}) = \bigcup_{j_1\geqslant 0} I_{j_1};\ \ \lambda_2\in [0,\infty )= \bigcup_{j_2\geqslant 0}I_{j_2};\\
&\lambda_3 \in -\lambda_0+[-C,\infty )= \bigcup_{j_3\geqslant -1}(-\lambda_0+I_{j_3}),\ \ I_{-1}=[-C,0).
\end{align*}

Notice that the frequency of $(\lambda_0,\lambda_2$) and $(\lambda_1,\lambda_3)$ have transversality and $j_1\lesssim \mathrm{ln} \left \langle \lambda _{0} \right \rangle$. It follows from the bilinear estimate \eqref{bilinear4} and Lemma \ref{V2toX} that
\begin{align*}
\mathscr{X}^{-}_{l_+a_+a_+} (u,v):= &\sum_{\lambda _{0}\ll 0,\:j_1\lesssim \mathrm{ln} \left \langle \lambda _{0} \right \rangle,\ j_2,j_3} \left \langle \lambda _{0} \right \rangle^{1/2}  \int_{\mathbb{R}\times \left [ 0,T \right ]  }\left |\overline{v}_{\lambda _{0}} (\partial _{x}^{2}u_{I_{j_1}})u_{I_{j_2}}\overline{u}_{-\lambda _{0}+I_{j_3}} \right |dxdt \nonumber\\
\lesssim&\sum_{\lambda _{0}\ll 0,\:j_1\lesssim \mathrm{ln} \left \langle \lambda _{0} \right \rangle,\ j_2,j_3} \left \langle \lambda _{0} \right \rangle^{1/2}\|{\overline{v}_{\lambda_0}u_{I_{j_2}}} \|_{L^2_{x,t}}\|{( \partial_x^2u_{I_{j_1}})\overline{u}_{-\lambda_0+I_{j_3}}} \|_{L^2_{x,t}}\nonumber\\
\lesssim&T^{\varepsilon/2}\sum_{\lambda _{0}\ll 0,\:j_1\lesssim \mathrm{ln} \left \langle \lambda _{0} \right \rangle,\ j_2,j_3}\left \langle \lambda _{0} \right \rangle^{1/2}\cdot \left [( \langle\lambda_0 \rangle+2^{j_2})\cdot \langle\lambda_0 \rangle^2 \right ] ^{-1/2+\varepsilon }2^{j_2(-1/q)}  \nonumber\\
&\quad\cdot 2^{2j_1} \left [ ( \langle\lambda_0 \rangle+2^{j_3})^3\right ] ^{-1/2+\varepsilon }2^{j_1(-1/q)+j_3(-1/q)}\|{v_{\lambda_0}}\|_{V^2_S}\| u \|^3_{X^{1/2}_{q,S}}  \nonumber\\
\lesssim&T^{\varepsilon/2}\sum_{\lambda _{0}\ll 0} \left \langle \lambda _{0} \right \rangle^{-1/2-1/q+6\varepsilon}\|{v_{\lambda_0}}\|_{V^2_S}\| u \|^3_{X^{1/2}_{q,S}} \nonumber\\
\lesssim& T^{\varepsilon/2}\|v\|_{Y^0_{q',S}}\| u \|^3_{X^{1/2}_{q,S}}.
\end{align*}
Here we first choose $\left \langle \lambda _{0}\right \rangle$ from $\left ( \left \langle \lambda _{0}\right \rangle+2^{j_k} \right ), (k=2,3)$, then handle it like \eqref{pm4}.

{\it Case $a_+l_+a_+$.} We collect $\lambda_k$ in the following dyadic version:
\begin{align*}
&\lambda_1\in [-\frac{\lambda_0}{2},\infty) = \bigcup_{j_1\gtrsim  \mathrm{ln} \left \langle \lambda _{0}  \right \rangle}I_{j_1};\ \ \lambda_2\in [0,-\frac{\lambda_0}{2})= \bigcup_{j_2\geqslant 0}I_{j_2};\\
&\lambda_3 \in -\lambda_0+[-\frac{\lambda_0}{2}-C,\infty )= \bigcup_{j_3\gtrsim  \mathrm{ln} \left \langle \lambda _{0}  \right \rangle}(-\lambda_0+I_{j_3}).
\end{align*}

Notice that the frequency of $(\lambda_0,\lambda_1$) and $(\lambda_2,\lambda_3)$ have transversality. In view of FCC \eqref{FCC}, we see that $j_3\approx (j_1\vee j_2)\gtrsim \mathrm{ln} \left \langle \lambda _{0} \right \rangle$. It follows from the bilinear estimate \eqref{bilinear4} and Lemma \ref{V2toX} that
\begin{align}
&\mathscr{X}^{-}_{l_+a_+a_+} (u,v):= \sum_{\lambda _{0}\ll 0,\:j_3\approx (j_1\vee j_2)\gtrsim \mathrm{ln} \left \langle \lambda _{0} \right \rangle} \left \langle \lambda _{0} \right \rangle^{1/2}  \int_{\mathbb{R}\times \left [ 0,T \right ]  }\left |\overline{v}_{\lambda _{0}} (\partial _{x}^{2}u_{I_{j_1}})u_{I_{j_2}}\overline{u}_{-\lambda _{0}+I_{j_3}} \right |dxdt \nonumber\\
\lesssim&\sum_{\lambda _{0}\ll 0,\:j_3\approx (j_1\vee j_2)\gtrsim \mathrm{ln} \left \langle \lambda _{0} \right \rangle} \left \langle \lambda _{0} \right \rangle^{1/2}\|{\overline{v}_{\lambda_0}\partial_x^2u_{I_{j_1}}} \|_{L^2_{x,t}}\|{u_{I_{j_2}} \overline{u}_{-\lambda_0+I_{j_3}}} \|_{L^2_{x,t}}\nonumber\\
\lesssim&T^{\varepsilon/2}\sum_{\lambda _{0}\ll 0,\:j_3\approx (j_1\vee j_2)\gtrsim \mathrm{ln} \left \langle \lambda _{0} \right \rangle}\left \langle \lambda _{0} \right \rangle^{1/2}\cdot 2^{2j_1}\cdot \left [( \langle\lambda_0 \rangle+2^{j_1})\cdot 2^{2j_1} \right ] ^{-1/2+\varepsilon }2^{j_1(-1/q)}  \nonumber\\
&\quad \cdot\left [ \langle\lambda_0 \rangle( \langle\lambda_0 \rangle+2^{j_3})^2\right ] ^{-1/2+\varepsilon }2^{j_2(-1/q)+j_3(1/2-1/q)}( \langle\lambda_0 \rangle+2^{j_3})^{-1/2}\|{v_{\lambda_0}}\|_{V^2_S}\| u \|^3_{X^{1/2}_{q,S}}  \nonumber\\
\lesssim&T^{\varepsilon/2}\sum_{\lambda _{0}\ll 0} \left \langle \lambda _{0} \right \rangle^{-1/2-2/q+6\varepsilon}\|{v_{\lambda_0}}\|_{V^2_S}\| u \|^3_{X^{1/2}_{q,S}} \nonumber\\
\lesssim& T^{\varepsilon/2}\|v\|_{Y^0_{q',S}}\| u \|^3_{X^{1/2}_{q,S}}.
\end{align}
where we choose $2^{j_k}$ from $\left ( \left \langle \lambda _{0}\right \rangle+2^{j_k} \right ),k=2,3$, then perform the summation on $j_2, j_1$ and $ j_3$ one by one, and finally apply H\"older's inequality on $\lambda_0$.

{\it Case $a_+a_+a_+$.} We collect $\lambda_k$ in the following dyadic version:
\begin{align*}
&\lambda_k\in [-\frac{\lambda_0}{2},\infty) = \bigcup_{j_k\gtrsim  \mathrm{ln} \left \langle \lambda _{0}  \right \rangle}I_{j_k},\, k=1,2;
&\lambda_3 \in -\lambda_0+[-\lambda_0-C,\infty )= \bigcup_{j_3\gtrsim  \mathrm{ln} \left \langle \lambda _{0}  \right \rangle}(-\lambda_0+I_{j_3}).
\end{align*}

Notice that the frequency of $(\lambda_0,\lambda_3)$ has transversality and $j_3\approx (j_1\vee j_2)\gtrsim  \mathrm{ln} \left \langle \lambda _{0}  \right \rangle $ by FCC \eqref{FCC}. It follows from the bilinear estimate \eqref{bilinear4}, $L^4$ estimates \eqref{lebesgue4a}and Lemma \ref{V2toX} that
\begin{align}
&\mathscr{X}^{-}_{a_+a_+a_+} (u,v):=\sum_{\lambda _{0}\ll 0,\:j_3\approx (j_1\vee j_2)\gtrsim  \mathrm{ln} \left \langle \lambda _{0}  \right \rangle }\left \langle \lambda _{0} \right \rangle^{1/2}  \int_{\mathbb{R}\times \left [ 0,T \right ]  }\left |\overline{v}_{\lambda _{0}} (\partial _{x}^{2}u_{I_{j_1}})u_{I_{j_2}}\overline{u}_{-\lambda _{0}+I_{j_3}} \right |dxdt \nonumber\\
&\lesssim \sum_{\lambda _{0}\ll 0,\:j_3\approx (j_1\vee j_2)\gtrsim  \mathrm{ln} \left \langle \lambda _{0}  \right \rangle } \langle\lambda_0 \rangle^{1/2}\|{\overline{v}_{\lambda_0}u_{-\lambda _{0}+I_{j_3}}} \|_{L^2_{x,t}}\|{\partial_x^2 u_{I_{j_1}}} \|_{L^4_{x,t}}\|{u_{I_{j_2}}} \|_{L^4_{x,t}}  \nonumber\\
&\lesssim  T^{3\varepsilon/4}\sum_{\lambda _{0}\ll 0,\:j_3\approx (j_1\vee j_2)\gtrsim  \mathrm{ln} \left \langle \lambda _{0}  \right \rangle } \langle\lambda_0 \rangle^{1/2}\cdot \left [ ( \langle\lambda_0 \rangle+2^{j_3})^3 \right ]^{-1/2+\varepsilon }\cdot 2^{j_3(1/2-1/q)}( \langle\lambda_0 \rangle+2^{j_3})^{-1/2}  \nonumber\\
&\quad\quad\cdot 2^{2j_1}\cdot 2^{(j_1+j_2)(-1/2-1/q+\varepsilon)}\|{v_{\lambda_0}} \|_{V^2_S}\| u \|^3_{X^{1/2}_{q,S}} \nonumber\\
&\lesssim T^{3\varepsilon/4}\sum_{\lambda _{0}\ll 0} \langle\lambda_0 \rangle^{-3/q+5\varepsilon }\|{v_{\lambda_0}}\|_{V^2_S}\| u \|^3_{X^{1/2}_{q,S}}  \nonumber\\
&\lesssim T^{3\varepsilon/4}\|v\|_{Y^0_{q',S}}\| u \|^3_{X^{1/2}_{q,S}}
\end{align}
Here we first choose $2^{j_3}$ from $\left ( \left \langle \lambda _{0}\right \rangle+2^{j_3} \right )$, then make the summation on $j_1, j_2$ and $ j_3$  by $j_1 \leq j_3+1$ and $j_2,j_3 \gtrsim  \mathrm{ln} \left \langle \lambda _{0} \right \rangle$, finally apply H\"older's inequality on $\lambda_0$.

$\mathbf{Step\:2.}$ $\lambda_0$ is the secondly minimal integer in $\lambda_0,\dots ,\lambda_3$.

$\mathbf{Step\:2.1.}$ We consider the case $\lambda _{1}=\underset{0\leqslant k\leqslant 3}{\mathrm{min} }\lambda _{k}$ and $\lambda _{0}\gg 0$. We need to consider the following cases by dividing the range of $\lambda _1$ as in Table \ref{step2.1}.

\begin{table}[h]
\begin{center}

\begin{tabular}{|c|c|c|c|}
\hline
${\rm Case}$   &  $\lambda_1\in $ & $\lambda_2\in $ & $\lambda_3\in $    \\
\hline
$a_-a_+a_+$  & $ (-\infty,0)$  & $ [2\lambda_0-C,\infty)$  &  $[\lambda_0,\infty)$  \\
\hline
$l_+a_+a_+$ & $ [0,\lambda_0/2 )$  & $ [3\lambda_0/2-C,\infty)$ & $ [\lambda_0, \infty)$  \\
\hline
$h_+a_+a_+$ & $ [\lambda_0/2,\lambda_0)$  & $ [\lambda_0,\infty )$  & $ [\lambda_0,\infty)$\\
\hline
\end{tabular}
\end{center}
\caption{$\lambda _1\leqslant \lambda _0 \leqslant \lambda _3 \leqslant \lambda _2,\ \ \lambda_0\gg  0$.}\label{step2.1}
\end{table}

{\it Case $a_-a_+a_+$.} We decompose $\lambda_k$ in the following way:
\begin{align*}
& \lambda_1\in (-\infty,0 ) = \bigcup_{j_1\geqslant 0} -I_{j_1};\ \ \lambda_2 \in \lambda_0+[\lambda_0-C,\infty ) = \bigcup_{j_2\gtrsim  \mathrm{ln} \left \langle \lambda _{0}  \right \rangle}(\lambda_0+I_{j_2});\\
&\lambda_3 \in [\lambda_0,\infty )= \bigcup_{j_3 \gtrsim \mathrm{ln} \left \langle \lambda _{0}  \right \rangle}I_{j_3}.
\end{align*}
Notice that the frequency of $(\lambda_0,\lambda_2$) and ($\lambda_1,\lambda_3$) have  transversality. In view of FCC \eqref{FCC}, we see that $j_2\approx (j_1\vee j_3)$. It follows from the bilinear estimate \eqref{bilinear4} and Lemma \ref{V2toX} that
\begin{align}\label{aaa}
\mathscr{H}^{+}_{a_-a_+a_+} (u,v):=& \sum_{\lambda _{0}\gg  0,\:j_2\approx (j_1\vee j_3)} \left \langle \lambda _{0} \right \rangle^{1/2}  \int_{\mathbb{R}\times \left [ 0,T \right ]  }\left |\overline{v}_{\lambda _{0}} (\partial _{x}^{2}u_{-I_{j_1}})u_{\lambda _{0}+I_{j_2}}\overline{u}_{I_{j_3}} \right |dxdt\nonumber\\
\lesssim&\sum_{\lambda _{0}\gg  0,\:j_2\approx (j_1\vee j_3)}\langle\lambda_0 \rangle^{1/2}\|{\overline{v}_{\lambda_0}u_{\lambda_0+I_{j_2}}} \|_{L^2_{x,t}}\|{ (\partial_x^2u_{-I_{j_1}})\overline{u}_{I_{j_3}}} \|_{L^2_{x,t}} \nonumber\\
\lesssim& T^{\varepsilon/2}\sum_{\lambda _{0}\gg  0,\:j_2\approx (j_1\vee j_3)}\langle\lambda_0 \rangle^{1/2}\cdot \big(2^{3j_2}\big)^{-1/2+\varepsilon }2^{j_2(1/2-1/q)}(\langle\lambda_0 \rangle+2^{j_2})^{-1/2}\nonumber\\
&\qquad\qquad \cdot 2^{2j_1}\left ( 2^{3j_1} \right )^{-1/2+\varepsilon }\cdot 2^{(j_1+j_3)(-1/q)}\|{v_{\lambda_0}}\|_{V^2_S}\| u \|^3_{X^{1/2}_{q,S}}\nonumber\\
\lesssim&T^{\varepsilon/2}\sum_{\lambda _{0}\gg 0} \langle\lambda_0 \rangle^{-1/2-2/q+6\varepsilon }\|{v_{\lambda_0}}\|_{V^2_S}\| u \|^3_{X^{1/2}_{q,S}}\nonumber\\
\lesssim& T^{\varepsilon/2}\|v\|_{Y^0_{q',S}}\| u \|^3_{X^{1/2}_{q,S}}.
\end{align}
Here the summations are made on $j_1, j_2$ and $ j_3$ one by one by $j_1 \leq j_2+1$ and $j_2\gtrsim  \mathrm{ln} \left \langle \lambda _{0} \right \rangle$.

{\it Case $l_+a_+a_+$.} We decompose $\lambda_k$ in the following way:
\begin{align*}
&\lambda_1\in [0,\frac{\lambda_0}{2}  ) = \bigcup_{j_1\geqslant 0}I_{j_1};\ \ \lambda_2 \in \lambda_0+[\frac{\lambda_0}{2} -C,\infty ) = \bigcup_{j_2\gtrsim \mathrm{ln} \left \langle \lambda _{0}  \right \rangle}(\lambda_0+I_{j_2});\\
&\lambda_3 \in [\lambda_0,\infty )= \bigcup_{j_3\gtrsim \mathrm{ln} \left \langle \lambda _{0}  \right \rangle}I_{j_3}.
\end{align*}
Notice that the frequency of $(\lambda_0,\lambda_2)$ and $(\lambda_1,\lambda_3)$ have transversality and $j_1\leq j_2\approx j_3$ by FCC \eqref{FCC}, thus we can use the same way as in \eqref{aaa} to obtain the same estimate and omit the details.

{\it Case $h_+a_+a_+$.} We decompose $\lambda_k$ in the following way:
$$\lambda_1\in \lambda_0+[-\frac{\lambda_0}{2},0  ) = \bigcup_{j_1\geqslant 0}(\lambda_0-I_{j_1});\ \ \lambda_k \in \lambda_0+[0,\infty ) = \bigcup_{j_k\geqslant 0}(\lambda_0+I_{j_k}),\ k=2,3.$$
From FCC \eqref{FCC} it follows that $j_2\approx (j_1\vee j_3)$. We need to estimate
\begin{align*}
\mathscr{H}^{+}_{h_+a_+a_+} (u,v):=&\sum_{\lambda _{0}\gg  0,\:j_2\approx (j_1\vee j_3)} \left \langle \lambda _{0} \right \rangle^{1/2}  \int_{\mathbb{R}\times \left [ 0,T \right ]  }\left |\overline{v}_{\lambda _{0}} (\partial _{x}^{2}u_{\lambda _{0}-I_{j_1}})u_{\lambda _{0}+I_{j_2}}\overline{u}_{\lambda _{0}+I_{j_3}} \right |dxdt \nonumber\\
\lesssim&\left ( \sum_{\lambda _{0}\gg  0,\:j_2\approx (j_1\vee j_3)\lesssim 1} +\sum_{\lambda _{0}\gg  0,\:j_2\approx (j_1\vee j_3)\gg  1}\right )\left \langle \lambda _{0} \right \rangle^{1/2} \nonumber\\
&\quad\quad \int_{\mathbb{R}\times \left [ 0,T \right ]  }\left |\overline{v}_{\lambda _{0}} (\partial _{x}^{2}u_{\lambda _{0}-I_{j_1}})u_{\lambda _{0}+I_{j_2}}\overline{u}_{\lambda _{0}+I_{j_3}} \right |dxdt \nonumber\\
=:&\mathscr{H}^{+,l}_{h_+a_+a_+} (u,v)+\mathscr{H}^{+,h}_{h_+a_+a_+} (u,v)
\end{align*}

In $\mathscr{H}^{+,l}_{h_+a_+a_+} (u,v)$, we have $\lambda _0\approx \lambda _1\approx \lambda _2\approx \lambda _3$. Using the same way as in $\mathscr{L}^+_{l}(u,v)$, we can get the result as desired.

In $\mathscr{H}^{+,h}_{h_+a_+a_+} (u,v)$, we can use bilinear estimate \eqref{bilinear4}, H\"older's inequality and Lemma \ref{V2toX} to obtain that
\begin{align*}
\mathscr{H}^{+,h}_{h_+a_+a_+} (u,v)
&\lesssim \sum_{j_2\approx (j_1\vee j_3)\gg  1}\langle\lambda_0 \rangle^{1/2}\|{\overline{v}_{\lambda_0}u_{\lambda_0+I_{j_2}}} \|_{L^2_{x,t}}\|\partial_x^2u_{\lambda_0-I_{j_1}}\|_{L^4_{x,t}}\|u_{\lambda_0+I_{j_3}} \|_{L^4_{x,t}}  \nonumber\\
&\lesssim T^{3\varepsilon/4}\sum_{j_2\approx (j_1\vee j_3)\gg  1}\langle\lambda_0 \rangle^{1/2}\cdot \left [ 2^{j_2}\langle\lambda_0 \rangle^2 \right ]^{-1/2+\varepsilon }2^{j_2(1/2-1/q)}(\langle\lambda_0 \rangle+2^{j_2})^{-1/2}\nonumber\\
&\qquad\qquad \cdot \langle\lambda_0 \rangle^22^{(j_1+j_3)(1/4-1/q+\varepsilon )}\cdot \langle\lambda_0 \rangle^{-3/4-3/4}\|{v_{\lambda_0}}\|_{V^2_S}\| u \|^2_{X^{1/2}_{q,S}}\| u \|_{X^{1/2}_{q,S}(\lambda_0+I_{j_2})}
\end{align*}
Here we can made the summation on $j_1, j_3$ by $j_1,j_3\leq j_2+1$, and use $\left ( \left \langle \lambda _{0}\right \rangle+2^{j_2} \right )^{-1/2}$ $\lesssim$ $2^{j_2(-1/2+2\varepsilon )}\left \langle \lambda _{0}\right \rangle^{-2\varepsilon }$ to absorb the positive power of $\langle \lambda _{0}\rangle$. Thus one can apply H\"older's inequality on $\lambda_0$, and then make the summation on $j_2$ for $0<\varepsilon <3/5q$,
\begin{align*}\mathscr{H}^{+,h}_{h_+a_+a_+} (u,v)&\lesssim T^{3\varepsilon/4}\sum_{\lambda _{0}\gg  0,\:j_2\gg  1}2^{j_2(-3/q+5\varepsilon )}\|{v_{\lambda_0}}\|_{V^2_S}\| u \|_{X^{1/2}_{q,S}(\lambda_0+I_{j_2})}\| u \|^2_{X^{1/2}_{q,S}}\nonumber\\
&\lesssim T^{3\varepsilon/4}\|v\|_{Y^0_{q',S}}\| u \|^3_{X^{1/2}_{q,S}}.
\end{align*}

$\mathbf{Step\:2.2.}$ We consider the case $\lambda _{1}=\underset{0\leqslant k\leqslant 3}{\mathrm{min} }\lambda _{k}$ and $\lambda _{0}\ll 0$. We need to consider the following cases by dividing the range of $\lambda _2$ as in Table \ref{step2.2}.

\begin{table}[h]
\begin{center}

\begin{tabular}{|c|c|c|c|}
\hline
${\rm Case}$   &  $\lambda_1\in $ & $\lambda_2\in $ & $\lambda_3\in $    \\
\hline
$1$  & $ [5\lambda_0/4-C,\lambda_0)$  & $ [\lambda_0,3\lambda_0/4)$  &  $[\lambda_0,3\lambda_0/4+C)$  \\
\hline
$2$ & $ [2\lambda_0-C,\lambda_0)$  & $ [3\lambda_0/4,0)$ & $ [\lambda_0,3\lambda_0/4)$  \\
\hline
$3$ & $ [7\lambda_0/4-C,\lambda_0)$  & $ [3\lambda_0/4,0)$  & $ [3\lambda_0/4,C)$\\
\hline
$4$  & $ [11\lambda_0/4-C,\lambda_0)$  & $ [0,-3\lambda_0/4)$  &  $[\lambda_0,0)$  \\
\hline
$5$ & $ [7\lambda_0/4-C,\lambda_0)$  & $ [0,-3\lambda_0/4)$ & $ [0,-3\lambda_0/4+C)$  \\
\hline
$6$ & $ (-\infty ,7\lambda_0/4+C)$  & $ [-3\lambda_0/4,\infty)$  & $ [\lambda_0,0)$\\
\hline
$7$ & $ (-\infty ,\lambda_0)$  & $ [-3\lambda_0/4,\infty)$  & $ [0,\infty)$\\
\hline
\end{tabular}
\end{center}
\caption{$\lambda _1\leqslant \lambda _0 \leqslant \lambda _3 \leqslant \lambda _2 ,\ \ \lambda_0\ll  0$.}\label{step2.2}
\end{table}

{\it Case $1$.} In this case, we have $\langle \lambda _{1}\rangle\sim \langle \lambda _{2}\rangle\sim \langle \lambda _{3}\rangle\sim \langle \lambda _{0}\rangle$, so we can use the similar way as  {\it Case} $h_-h_-h_-$ in Step 1.2 to obtain the result and omit the details.

{\it Case $3$} and {\it Case $5$.} Similar to {\it Case} $h_-l_- l_{l\pm} $ as in Step 1.2, so we omit the details.

{\it Case $2$} and {\it Case $4$.} We decompose $\lambda_k$ in Case 2 and Case 4 as follows:
\begin{equation*}
\left\{\begin{array}{l}
	\lambda_1\in 2\lambda_0+[-C,-\lambda_0) = \bigcup_{j_1\geqslant -1}(2\lambda_0+I_{j_1});\ \ \lambda_2\in [\frac{3\lambda_0}{4},0 ) = \bigcup_{j_2\geqslant 0}(-I_{j_2});\\
\lambda_3\in \lambda_0+[0,-\frac{\lambda_0}{4}) = \bigcup_{j_3\geqslant 0}(\lambda_0+I_{j_3}).
\end{array}\right.
\end{equation*}
\begin{equation*}
\left\{\begin{array}{l}
\lambda_1\in \lambda_0+[\frac{7\lambda_0}{4}-C,0) = \bigcup_{j_1\geqslant 0}(\lambda_0-I_{j_1});\ \ \lambda_2\in [0,-\frac{3\lambda_0}{4}) = \bigcup_{j_2\geqslant 0}I_{j_2};\\
\lambda_3\in [\lambda_0,0) = \bigcup_{j_3\geqslant 0}(-I_{j_3}).
\end{array}\right.
\end{equation*}
In these two cases, we have $2^{j_1}\approx 2^{j_2}+2^ {j_3}$, hence $|\lambda_1-\lambda_3|\gtrsim \left \langle \lambda _{0} \right \rangle$. In fact, $j_1,j_2,j_3\lesssim \ln  \left \langle \lambda _{0}  \right \rangle $ and the frequency of $(\lambda_0,\lambda_2)$ and $(\lambda_1,\lambda_3)$ have transversality, so we can also use the similar way as in $h_-l_- l_{l\pm} $ in Step 1.2 to obtain the result.

{\it Case $7$.} Similar to {\it Case} $a_-a_+a_+$ as in Step 2.1, so we omit the details.

{\it Case $6$.} We decompose $\lambda_k$ in the following way:
\begin{align*}
&\lambda_1\in \frac{3\lambda_0}{4}+(-\infty,\lambda_0+C) = \bigcup_{j_1\gtrsim  \ln  \left \langle \lambda _{0}  \right \rangle}(\frac{3\lambda_0}{4}-I_{j_1});\ \ \lambda_2\in \frac{\lambda_0}{4}+[-\lambda_0,\infty ) = \bigcup_{j_2\gtrsim  \ln  \left \langle \lambda _{0}  \right \rangle}(\frac{\lambda_0}{4}+I_{j_2});\\
&\lambda_3\in [\lambda_0,0) = \bigcup_{j_3\geqslant 0}(-I_{j_3}).
\end{align*}
By FCC \eqref{FCC}, we have $j_1\approx j_2\geqslant j_3$. We see that the frequency of $(\lambda_0,\lambda_2)$ and $(\lambda_1,\lambda_3)$ have transversality. We can use bilinear estimate \eqref{bilinear4}, H\"older's inequality and Lemma \ref{V2toX} to obtain that
\begin{align}
&\mathscr{H}^{-}_{6} (u,v):=\sum_{\lambda _{0}\ll  0,\:j_3\lesssim \ln  \left \langle \lambda _{0}  \right \rangle \lesssim j_1\approx j_2}\left \langle \lambda _{0} \right \rangle^{1/2}  \int_{\mathbb{R}\times \left [ 0,T \right ]  }\left |\overline{v}_{\lambda _{0}} (\partial _{x}^{2}u_{3\lambda _{0}/4-I_{j_1}})u_{\lambda _{0}/4+I_{j_2}}\overline{u}_{-I_{j_3}} \right |dxdt\nonumber\\
&\lesssim \sum_{\lambda _{0}\ll  0,\:j_3\lesssim \ln  \left \langle \lambda _{0}  \right \rangle \lesssim j_1\approx j_2}\langle\lambda_0 \rangle^{1/2}\|{\overline{v}_{\lambda_0}u_{\lambda_0/4+I_{j_2}}} \|_{L^2_{x,t}}\|(\partial_x^2u_{3\lambda_0/4-I_{j_1}})\overline{u} _{-I_{j_3}}\|_{L^2_{x,t}}\nonumber\\
&\lesssim T^{\varepsilon/2}\sum_{\lambda _{0}\ll  0,\:j_3\lesssim \ln  \left \langle \lambda _{0}  \right \rangle \lesssim j_1\approx j_2}\langle\lambda_0 \rangle^{1/2}\cdot \left [ (\langle\lambda_0 \rangle+2^{j_2})^3 \right ]^{-1/2+\varepsilon }2^{j_2(1/2-1/q)}(\langle\lambda_0 \rangle+2^{j_2})^{-1/2}\nonumber\\
&\quad\quad \cdot (\langle\lambda_0 \rangle+2^{j_1})^2\left [ (\langle\lambda_0 \rangle+2^{j_1})^3 \right ]^{-1/2+\varepsilon }\cdot 2^{j_1(1/2-1/q)+j_3(-1/q)}(\langle\lambda_0 \rangle+2^{j_1})^{-1/2}\|{v_{\lambda_0}}\|_{V^2_S}\| u \|^3_{X^{1/2}_{q,S}}\nonumber\\
&\lesssim T^{\varepsilon/2}\sum_{\lambda _{0}\ll 0} \langle\lambda_0 \rangle^{-1/2-2/q+6\varepsilon }\|{v_{\lambda_0}}\|_{V^2_S}\| u \|^3_{X^{1/2}_{q,S}}\nonumber\\
&\lesssim T^{\varepsilon/2}\|v\|_{Y^0_{q',S}}\| u \|^3_{X^{1/2}_{q,S}}.
\end{align}
Here we choose $2^{j_1},2^{j_2}$ from $\left ( \left \langle \lambda _{0}\right \rangle+2^{j_1} \right )$ and $\left ( \left \langle \lambda _{0}\right \rangle+2^{j_2} \right )$, then make the summation on $j_3,j_1,j_2$ by using $ j_1\approx j_2\gtrsim \ln \langle \lambda _{0}\rangle$, and finally apply H\"older's inequality on $\lambda_0$.

$\mathbf{Step\:2.3.}$ We consider the case $\lambda _{2}=\underset{0\leqslant k\leqslant 3}{\mathrm{min} }\lambda _{k}$ and $\lambda _{0}\gg 0$. We need to consider the following cases by dividing the range of $\lambda _2$ as in Table \ref{step2.3}.

\begin{table}[h]
\begin{center}

\begin{tabular}{|c|c|c|c|}
\hline
${\rm Case}$   &  $\lambda_1\in $ & $\lambda_2\in $ & $\lambda_3\in $    \\
\hline
$a_+a_-a_+$  & $ [2\lambda_0-C,\infty)$  & $ [-\infty,0)$  &  $[\lambda_0,\infty)$  \\
\hline
$a_+l_+a_+$ & $ [3\lambda_0/2-C,\infty)$  & $ [0,\lambda_0/2)$ & $ [\lambda_0,\infty)$  \\
\hline
$h_+h_+h_+$ & $ [\lambda_0,2\lambda_0)$  & $ [\lambda_0/2,\lambda_0)$  & $ [\lambda_0,2\lambda_0+C)$\\
\hline
$a_+h_+a_+$  & $ [2\lambda_0,\infty)$  & $ [\lambda_0/2,\lambda_0)$  &  $[3\lambda_0/2-C,\infty)$  \\
\hline
\end{tabular}
\end{center}
\caption{$\lambda _2\leqslant \lambda _0 \leqslant \lambda _3 \leqslant \lambda _1,\ \ \lambda_0\gg  0$.}\label{step2.3}
\end{table}

{\it Case $a_+a_-a_+$.} We decompose $\lambda_k$ in the following way:
\begin{align*}
&\lambda_1\in \lambda_0+[\lambda_0-C,\infty ) = \bigcup_{j_1\gtrsim  \ln  \left \langle \lambda _{0}  \right \rangle}(\lambda_0+I_{j_1});\ \ \lambda_2\in (-\infty,0) = \bigcup_{j_2\geqslant 0}(-I_{j_2});\\
&\lambda_3\in [\lambda_0,\infty ) = \bigcup_{j_3\gtrsim  \ln  \left \langle \lambda _{0}  \right \rangle}I_{j_3}.
\end{align*}
From FCC \eqref{FCC} it follows that $j_1\approx (j_2\vee j_3)$. We see that the frequency of $(\lambda_0,\lambda_1)$ and $(\lambda_2,\lambda_3)$ have  transversality. We can use bilinear estimate \eqref{bilinear4}, H\"older's inequality and Lemma \ref{V2toX} to obtain that
\begin{align*}
&\mathscr{T}^{+}_{a_+a_-a_+} (u,v):=\sum_{\lambda _{0}\gg  0,\:j_1\approx (j_2\vee j_3)\gtrsim \ln  \left \langle \lambda _{0}  \right \rangle} \left \langle \lambda _{0} \right \rangle^{1/2}  \int_{\mathbb{R}\times \left [ 0,T \right ]  }\left |\overline{v}_{\lambda _{0}} (\partial _{x}^{2}u_{\lambda _{0}+I_{j_1}})u_{-I_{j_2}}\overline{u}_{I_{j_3}} \right |dxdt \nonumber\\
&\lesssim \sum_{\lambda _{0}\gg  0,\:j_1\approx (j_2\vee j_3)\gtrsim \ln  \left \langle \lambda _{0}  \right \rangle}\langle\lambda_0 \rangle^{1/2}\|{\overline{v}_{\lambda_0}\partial_x^2u_{\lambda_0+I_{j_1}}} \|_{L^2_{x,t}}\|u_{-I_{j_2}}\overline{u} _{I_{j_3}}\|_{L^2_{x,t}}\nonumber\\
&\lesssim T^{\varepsilon/2}\sum_{\lambda _{0}\gg  0,\:j_1\approx (j_2\vee j_3)\gtrsim \ln  \left \langle \lambda _{0}  \right \rangle}\langle\lambda_0 \rangle^{1/2}\cdot(\langle\lambda_0 \rangle+2^{j_1})^2\cdot \left (2^{3j_1}\right )^{-1/2+\varepsilon }2^{j_1(1/2-1/q)}\nonumber\\
&\quad\quad\cdot(\langle\lambda_0 \rangle+2^{j_1})^{-1/2}\cdot \left [2^{3(j_2\vee j_3)}\right ]^{-1/2+\varepsilon }\cdot 2^{(j_2+j_3)(-1/q)}\|{v_{\lambda_0}}\|_{V^2_S}\| u \|^3_{X^{1/2}_{q,S}}\nonumber\\
&\lesssim T^{\varepsilon/2}\sum_{\lambda _{0}\gg 0} \langle\lambda_0 \rangle^{-1/2-2/q+6\varepsilon }\|{v_{\lambda_0}}\|_{V^2_S}\| u \|^3_{X^{1/2}_{q,S}}\nonumber\\
&\lesssim T^{\varepsilon/2}\|v\|_{Y^0_{q',S}}\| u \|^3_{X^{1/2}_{q,S}}.
\end{align*}

{\it Case $a_+l_+a_+$.} We decompose $\lambda_k$ in the similar way as in last case, and have $j_3\approx j_1\geq j_2$. Thus using the same way as in last case, we can get the desired result.

{\it Case $h_+h_+h_+$.} In this case, we have $\langle \lambda _{1}\rangle\sim \langle \lambda _{2}\rangle\sim \langle \lambda _{3}\rangle\sim \langle \lambda _{0}\rangle$, so we can use the similar way as  {\it Case} $h_-h_-h_-$ in Step 1.2 to obtain the result and omit the details.

{\it Case $a_+h_+a_+$.} Similar to  Case $a_+l_+a_+$, we have $j_3\approx j_1\geq j_2$. Thus we could use the above approach to obtain the estimate as desired.

$\mathbf{Step\:2.4.}$ We consider the case $\lambda _{2}=\underset{0\leqslant k\leqslant 3}{\mathrm{min} }\lambda _{k}$ and $\lambda _{0}\ll 0$. The subcases are the same as in Table \ref{step2.2}, except that the positions of $\lambda _{1}$ and $\lambda _{2}$ need to be exchanged. {\it Case $1$}--{\it Case $6$} are similar to {\it Case $1$}--{\it Case $6$} in Step 2.2, {\it Case $7$} is similar to {\it Case $a_+a_-a_+$} in Step 2.3, so we omit the details.

$\mathbf{Step\:2.5.}$ We consider the case $\lambda _{3}=\underset{0\leqslant k\leqslant 3}{\mathrm{min} }\lambda _{k}$, it follows from FCC \eqref{FCC} we have $\lambda _0\approx  \lambda _1 \approx \lambda _2 \approx \lambda _3$, which is similar to $\mathscr{L}^{+}_{l} (u,v)$ in Step 1.1.

$\mathbf{Step\:3.}$ We consider the case $\left | \lambda _0 \right | \lesssim 1$. It follows that $\left \langle \lambda _0 \right \rangle ^{1/2}\sim 1$, which implies that we now have gained half order derivative. Besides, we can use biharmonic estimates instead of Strichartz estimates in the low frequency. As a result, this case becomes easier to manage, and we can skip the details.
\end{proof}

\subsection{Quintic estimates}

\begin{lem}
    Let $q\geq1$ and $0<\varepsilon\ll 1$, for any $u\in X_{q,S}^{\frac{1}{2}}$, we have
    \begin{align}
\left \| \int_{0}^{t}e^{i(t-\tau )\partial _{x}^{4} }\left ( u\left | u \right |^{4}    \right )(\tau )d\tau \right \|_{X_{q,S}^{\frac{1}{2}}}
\lesssim T^{\varepsilon }\left \| u \right \|_{X_{q,S}^{\frac{1}{2}}}^{5}.
\end{align}
\end{lem}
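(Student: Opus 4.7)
The strategy parallels that of the trilinear estimates in the preceding lemma, with the welcome simplification that $u|u|^4$ carries no derivatives. As in the derivation of \eqref{vie1}--\eqref{2}, I would first invoke duality (Propositions~\ref{UVprop3} and~\ref{UVprop4}) to reduce the estimate to
\begin{align*}
\left|\int_{\mathbb{R}\times[0,T]}\bar v\cdot u|u|^4\,dx\,dt\right|\lesssim T^\varepsilon\|u\|_{X^{1/2}_{q,S}}^{5}\|v\|_{Y^0_{q',S}}
\end{align*}
for $v$ supported in $\mathbb{R}\times[0,T]$. A frequency-uniform decomposition of all six factors $v,u,u,\bar u,\bar u,u$ then recasts the problem as bounding
\begin{align*}
\sum_{\lambda_0,\ldots,\lambda_5}\langle\lambda_0\rangle^{1/2}\left|\int\bar v_{\lambda_0}u_{\lambda_1}u_{\lambda_2}\bar u_{\lambda_3}\bar u_{\lambda_4}u_{\lambda_5}\,dx\,dt\right|,
\end{align*}
subject to the frequency constraint $\lambda_0\approx\lambda_1+\lambda_2-\lambda_3-\lambda_4+\lambda_5$ (FCC).

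By the symmetry among $\lambda_1,\ldots,\lambda_5$, I would assume $|\lambda_1|=\max_{i\geq 1}|\lambda_i|$; the FCC then forces $\langle\lambda_1\rangle\gtrsim\langle\lambda_0\rangle$, so that the prefactor $\langle\lambda_0\rangle^{1/2}$ is automatically absorbed by the $X^{1/2}_{q,S}$-weight $\langle\lambda_1\rangle^{1/2}$ on $u_{\lambda_1}$. For the space-time integral I would apply H\"older as $L^1=L^2\cdot L^2$, bundling the factors as $(\bar v_{\lambda_0}u_{\lambda_1})\cdot(u_{\lambda_2}\bar u_{\lambda_3}\bar u_{\lambda_4}u_{\lambda_5})$. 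When $\lambda_0,\lambda_1$ are transverse, the bilinear estimate \eqref{bilinear4} places the first pair in $L^2_{x,t}$ with the decay $(\mathrm{dist}\cdot\langle\lambda_1\rangle^2)^{-1/2+\varepsilon}$; the second block I would further H\"older-split as $L^2=L^4\cdot L^4$, estimating each single-frequency $L^4_{x,t}$ factor by Lemma~\ref{L4} via $\|u_{\lambda_i}\|_{L^4_{x,t}}\lesssim\langle\lambda_i\rangle^{-1/4}\|u_{\lambda_i}\|_{V^2_S}$, or using a second bilinear estimate whenever an additional transverse pair is available.

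A case analysis modeled on Steps~1--3 of the trilinear proof---organized by the sign of $\lambda_0$, by which $\lambda_i$ is minimal, and by the presence or absence of transversality---then covers the remaining frequency geometries; in each case summation is closed by first eliminating one index via the FCC (at $O(1)$ cost) and then applying H\"older in $\ell^{q'}\times(\ell^q)^5$ to recover $\|v\|_{Y^0_{q',S}}\|u\|_{X^{1/2}_{q,S}}^{5}$. The main obstacle will be the fully diagonal sub-case in which $|\lambda_0|,\ldots,|\lambda_5|$ are all comparable and no two frequencies are transverse; there I would discard the bilinear estimate and rely instead on the biharmonic Strichartz $L^8_tL^4_x$ bound \eqref{strichartzc} together with the embedding $V^2_S\hookrightarrow L^\infty_tL^2_x$, distributing the six factors into H\"older-compatible slots. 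Because the quintic term $u|u|^4$ carries no derivatives, this combined Strichartz/$L^4$ apparatus produces a surplus of smoothing that comfortably absorbs $\langle\lambda_0\rangle^{1/2}$ and leaves a positive power $T^\varepsilon$ for small times, so I expect the entire argument to be strictly simpler than the trilinear one---no sub-case should require the critical bilinear-plus-$L^4$ pairing that appeared in that proof.
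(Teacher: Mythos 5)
Your overall plan (duality, frequency-uniform decomposition, FCC, H\"older combined with bilinear/$L^4$/$L^\infty$ tools) is the same framework the paper uses, and the guiding intuition—that the absence of derivatives on $u|u|^4$ produces a surplus of smoothing, so $\langle\lambda_0\rangle^{1/2}$ is easy to absorb—is correct and is precisely why the paper's quintic proof is much shorter than the trilinear one. But two concrete points in your outline do not survive scrutiny.

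First, "$L^2=L^4\cdot L^4$" for the four-factor block $u_{\lambda_2}\bar u_{\lambda_3}\bar u_{\lambda_4}u_{\lambda_5}$ is a H\"older miscount: four factors each estimated in $L^4_{x,t}$ place the product only in $L^1_{x,t}$, and pairing that with the $L^2_{x,t}$-bound on $\bar v_{\lambda_0}u_{\lambda_1}$ overshoots H\"older's inequality. The paper's default allocation of the six factors is instead $L^\infty_{x,t}\times(L^4_{x,t})^4\times L^\infty_{x,t}$: $v_{\lambda_0}$ and the smallest $u$-factor go to $L^\infty_{x,t}$ via the unit-band embedding $\|f_\lambda\|_{L^\infty_{x,t}}\lesssim\|f_\lambda\|_{L^\infty_t L^2_x}\lesssim\|f_\lambda\|_{V^2_S}$, and the remaining four $u$-factors go to $L^4_{x,t}$ via Lemma~\ref{L4}. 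This split—used in the paper's Steps~1.1 and~3—needs no bilinear estimate at all, and the decay comes from the $\langle\lambda\rangle^{-3/4}$ gain of the $L^4$-estimate on the high-frequency $u$-factors.

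Second, your default bilinear pairing $(\bar v_{\lambda_0},u_{\lambda_1})$, and your identification of the "fully diagonal" configuration as the only obstruction, both miss the paper's Step~1.2: $\lambda_0$ maximal, exactly one of $\lambda_1,\ldots,\lambda_5$ (say $\lambda_1$) in the high range, the rest low. If $\lambda_1$ sits near $\lambda_0$ there is no transversality for $(v_{\lambda_0},u_{\lambda_1})$, yet this case is far from diagonal. The paper's resolution is to apply the bilinear estimate to $(v_{\lambda_0},u_{\lambda_2})$ with $\lambda_2$ low (so transversality is automatic), putting the high $u_{\lambda_1}$-factor in $L^4_{x,t}$ and the remaining small factors in $L^\infty_{x,t}$. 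Without that switch, the $L^\infty\times(L^4)^4\times L^\infty$ split alone yields only $\langle\lambda_0\rangle^{-3/4}$ decay from the single high $L^4$-factor, which after multiplying by $\langle\lambda_0\rangle^{1/2}$ is insufficient to close the $\lambda_0$-sum for $q\leq4$. Consequently your closing prediction—that no sub-case requires the bilinear-plus-$L^4$ combination—is wrong; the paper's Step~1.2 uses exactly that combination.
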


\begin{proof}

In view of (\ref{vie1}), it suffices to show that
\begin{align}
\left | \int_{\mathbb{R}\times \left [ 0,T \right ]  }\overline{v}u\left | u \right |^4 dxdt  \right |
\lesssim T^{\varepsilon }\left \| u  \right \|_{X_{q,S}^{\frac{1}{2}}}^{5}\left \| v \right \|_{Y_{q',S}^{-\frac{1}{2}}}.\label{5}
\end{align}
We perform a uniform decomposition with $u,v$ in the left-hand side of (\ref{5}), it suffice to prove that
\begin{align}
\left | \sum_{\lambda _{0},\dots,\lambda _{5}  } \left \langle \lambda _{0} \right \rangle^{1/2}  \int_{\mathbb{R}\times \left [ 0,T \right ]  }\overline{v}_{\lambda _{0}} u_{\lambda _{1}}\overline{u}_{\lambda _{2}} u_{\lambda _{3}}\overline{u}_{\lambda _{4}}u_{\lambda _{5}}(x,t)dxdt  \right |
\lesssim T^{\varepsilon }\left \| u \right \|_{X_{q,S}^{1/2}}^{5}\left \| v \right \|_{Y_{q',S}^{0}}
\end{align}

In order to keep the left-hand side of the inequality above nonzero, we have the new frequency constraint condition (FCC)
\begin{align}
\lambda _{0}+\lambda _{2}+\lambda _{4}\approx \lambda _{1}+\lambda _{3}+\lambda _{5}. \label{FCC2}
\end{align}
We can assume that $\lambda _{0}\geqslant 0$, because if it is not the case, we can replace $\lambda _{0},\dots ,\lambda _{5}$ with $-\lambda _{0},\dots ,-\lambda _{5}$.

{\bf Step 1.} Let us assume that $\lambda_0 = \max_{0\leq k\leq 5} |\lambda_k|$.
For short, considering the higher and lower frequency of $\lambda_k$,  we use the following notations:
$$
\left\{
\begin{array}{l}
\lambda_k \in h \Leftrightarrow \lambda_k\in [\lambda_0/8, \lambda_0] \\
\lambda_k \in h_- \Leftrightarrow \lambda_k\in [-\lambda_0, -\lambda_0/8]\\
\lambda_k \in l \Leftrightarrow \lambda_k\in [0, \lambda_0/8]\\
\lambda_k \in l_- \Leftrightarrow \lambda_k\in [-\lambda_0/8, 0]
\end{array}
\right.
$$
Combining FCC \eqref{FCC2}, we know that $\lambda_1,...,\lambda_5$ cannot all belong to the frequency intervals $l_{\pm}$.

{\bf Step 1.1.} We consider the case at least two higher frequency in $\lambda_1,...,\lambda_5$.
Without loss of generality, we can assume $\lambda _{1},\lambda _{2}$ belong to higher frequency intervals $h_{\pm}$. We decompose $\lambda_1,...,\lambda_5$ in a dyadic way:
\begin{align}
\lambda _{k}  \in \bigcup_{j_k\geqslant  0}(\lambda_0-  I_{j_k}), \ \  k=1,...,5. \label{decompose}
\end{align}

We can use H\"older's inequality and $L^4$ estimate in Lemma \ref{L4} to obtain that
\begin{align*}
&\mathscr{L}_{h_{\pm }h_{\pm }l_{\pm }l_{\pm }l_{\pm }}:=\sum_{\lambda _{0}\geqslant 0, j_{k} \lesssim \ln \langle \lambda _{0} \rangle ,k=1,...,5 } \left \langle \lambda _{0} \right \rangle^{1/2}  \int_{\mathbb{R}\times \left [ 0,T \right ]  }\overline{v}_{\lambda _{0}} u_{\lambda _{0}-I_{j_1}}\overline{u}_{\lambda _{0}-I_{j_2}} u_{\lambda _{0}-I_{j_3}}\overline{u}_{\lambda _{0}-I_{j_4}}u_{\lambda _{0}-I_{j_5}}dxdt \nonumber\\
&\lesssim \sum_{\lambda _{0}\geqslant  0,j_{k} \lesssim \ln \langle \lambda _{0} \rangle   ,k=1,...,5 } \left \langle \lambda _{0} \right \rangle^{1/2} \|v_{\lambda _0} \|_{L^\infty _{x,t}}\prod_{k=1,2}\|u_{\lambda _{0}-I_{j_k}} \|_{L^4_{x,t}}\prod_{k=3,4}\|u_{\lambda _{0}-I_{j_k}} \|_{L^4_{x,t}}\| u_{\lambda _{0}-I_{j_5}} \|_{L^\infty_{x,t}}\nonumber\\
&\lesssim T^\eps \sum_{\lambda _{0}\geqslant  0, j_{k} \lesssim \ln \langle \lambda _{0} \rangle ,k=1,...,5 } \left \langle \lambda _{0} \right \rangle^{1/2}\|v_{\lambda _0} \|_{V^2 _S}\cdot \left \langle \lambda _0 \right \rangle ^{-3/2}2^{(j_3+j_4)\varepsilon+(j_1+j_2)(1/4-1/q+\varepsilon )}2^{j_5/2} \|u\|^5_{X^{1/2}_{q,S}}\nonumber\\
&\lesssim T^\eps  \sum_{\lambda _{0}\geqslant  0}\langle \lambda _0 \rangle ^{-2/q+4\varepsilon }\|v_{\lambda _0} \|_{V^2 _S}\|u\|^5_{X^{1/2}_{q,S}}\nonumber\\
&\lesssim T^\eps  \left \| v \right \|_{Y_{q',S}^{0}} \left \| u \right \|_{X_{q,S}^{1/2}}^{5},
\end{align*}
where we use $\left \| v_{\lambda_0} \right \|_{L^\infty _{x,t}}\lesssim \left \| v_{\lambda_0} \right \|_{L^\infty _tL^2_x}\lesssim \left \| v_{\lambda_0} \right \|_{V^2_S}$ and the embedding
\begin{align*}
    &\|{u_{\lambda _{0}-I_{j_k}}}\|_{X^{-1/4}_{4,S}}\lesssim \| u_{\lambda _{0}-I_{j_k}} \|_{X^{1/2}_{q,S}}\,,(k=3,4);\quad
    &\|{u_{\lambda _{0}-I_{j_k}}}\|_{V^2_S}\lesssim \| u_{\lambda _{0}-I_{j_k}} \|_{X^{1/2}_{q,S}}\,,(k=5).
\end{align*}


{\bf Step 1.2.} Now we study the case that there is only one higher frequency in $\lambda_1,...,\lambda_5$.
Without loss of generality, we can assume $\lambda _{1}$ belong to higher frequency intervals $h_{\pm}$ and $\lambda_2,...,\lambda_5$ belong to frequency intervals $l_{\pm}$. We decompose $\lambda_1,...,\lambda_5$ in a dyadic way as \eqref{decompose}.

We see that the frequency of $(\lambda_0,\lambda_2)$ has transversality. We can use bilinear estimate \eqref{bilinear3}, $L^4$ estimate \eqref{lebesgue4a} and H\"older's inequality to obtain that
\begin{align*}
&\mathscr{L}_{h_{\pm }l_{\pm }l_{\pm }l_{\pm }l_{\pm }}:=\sum_{\lambda _{0}\geqslant 0, j_{k} \lesssim \ln \langle \lambda _{0} \rangle ,k=1,...,5 } \left \langle \lambda _{0} \right \rangle^{1/2}  \int_{\mathbb{R}\times \left [ 0,T \right ]  }\overline{v}_{\lambda _{0}} u_{\lambda _{0}-I_{j_1}}\overline{u}_{\lambda _{0}-I_{j_2}} u_{\lambda _{0}-I_{j_3}}\overline{u}_{\lambda _{0}-I_{j_4}}u_{\lambda _{0}-I_{j_5}}dxdt \nonumber\\
&\lesssim \sum_{\lambda _{0}\geqslant 0, j_{k} \lesssim \ln \langle \lambda _{0} \rangle ,k=1,...,5 } \left \langle \lambda _{0} \right \rangle^{1/2} \|v_{\lambda _0}u_{\lambda _{0}-I_{j_2}} \|_{L^2_{x,t}}\|u_{\lambda _{0}-I_{j_1}} \|_{L^4_{x,t}}\|u_{\lambda _{0}-I_{j_3}} \|_{L^4_{x,t}}\prod_{k=4,5}^{} \| u_{\lambda _{0}-I_{j_k}} \|_{L^\infty_{x,t}}\nonumber\\
&\lesssim T^{3\varepsilon/4 }\sum_{\lambda _{0}\geqslant 0, j_{k} \lesssim \ln \langle \lambda _{0} \rangle,k=1,...,5 } \left \langle \lambda _{0} \right \rangle^{1/2}\cdot \left \langle \lambda _0 \right \rangle ^{3(-1/2+\varepsilon )}\cdot \left \langle \lambda _0 \right \rangle ^{-3/4}2^{j_1(1/4-1/q+\varepsilon )+j_3\varepsilon}\cdot  2^{(j_4+j_5)(1/2-1/q)}\nonumber\\
&\qquad\qquad\qquad\qquad\qquad\qquad\cdot \|v_{\lambda _0} \|_{V^2 _S} \|u\|^5_{X^{1/2}_{q,S}}\nonumber\\
&\lesssim T^{3\varepsilon/4 }\sum_{\lambda _{0}\geqslant 0}\left \langle \lambda _{0} \right \rangle^{-1/2-3/q+6\varepsilon }\|v_{\lambda _0} \|_{V^2 _S}\|u\|^5_{X^{1/2}_{q,S}}\nonumber\\
&\lesssim T^{3\varepsilon/4 }\left \| v \right \|_{Y_{q',S}^{0}} \left \| u \right \|_{X_{q,S}^{1/2}}^{5}
\end{align*}
Here we use $\|{u_{\lambda _{0}-I_{j_3}}}\|_{L^4_{x,t}}\lesssim 2^{j_3\varepsilon}\| u \|_{X^{1/2}_{q,S}}$, $\|{u_{\lambda _{0}-I_{j_k}}}\|_{L^\infty _{x,t}}\lesssim 2^{j_k/2-1/q}\| u \|_{X^{1/2}_{q,S}}$ $(k=4,5)$.

{\bf Step 2.} Let us assume that $ \lambda_0 $ is the second largest one in $|\lambda_0|,...,|\lambda_5|$. There exists $i\in \left \{ 1,\dots ,5 \right \}$ such that $\left | \lambda_i \right | = \max_{0\leq k\leq 5} |\lambda_k|$.
If $|\lambda_i|$ is the largest one in $|\lambda_0|, ..., |\lambda_5|$, we have from the frequency constraint condition \eqref{FCC2}
$$
\lambda_0 \leq  |\lambda_i | \leq 10\lambda_0.
$$
At this point, we can know that $\left \langle \lambda_i \right \rangle \sim \left \langle \lambda_0 \right \rangle$. That is to say, there exists a high frequency in $\lambda_1,...,\lambda_5$. We see that this case is quite similar to that of $\lambda_0$ to be the largest one as in  Step 1. Therefore the details are omitted.



{\bf Step 3.} We assume that $\lambda_0$ is the third largest one in $|\lambda_0|,...,|\lambda_5|$. Namely, there is a bijection $\pi :\left \{ 1,2,3,4,5 \right \}\to \left \{ 1,2,3,4,5 \right \}$ such that
$$
|\lambda_{\pi(1)}| \geq  |\lambda_{\pi(2)}| \geq \lambda_0 \geq  |\lambda_{\pi(3)}| \geq   |\lambda_{\pi(4)}| \geq |\lambda_{\pi(5)}|
$$
We decompose $\lambda_{\pi(1)},...,\lambda_{\pi(5)}$ in a dyadic way:
\begin{align}\label{deco}
&\lambda_{\pi(k)} \in \bigcup_{j _{{\pi(k)}}\geqslant  0}\pm (\lambda_0+I_{j_{\pi(k)}}), \ \  k=1,2;
&\lambda_{\pi(k)} \in \bigcup_{j _{{\pi(k)}}\lesssim \mathrm{ln} \left \langle \lambda _{0}  \right \rangle}\pm I_{j_{\pi(k)}}, \ \  k=3,4,5.
\end{align}
We can use H\"older's inequality and $L^4$ estimate in Lemma\eqref{L4} to obtain that
\begin{align}
\mathscr{L}(u,v)&\lesssim \sum_{\lambda _{0},j _{{\pi(k)}}\geqslant 0 ,k=1,2;\,j _{{\pi(k)}} \lesssim \ln \langle \lambda _{0} \rangle ,k=3,4,5 }\left \langle \lambda _{0} \right \rangle^{1/2} \|v_{\lambda _0} \|_{L^\infty _{x,t}}\prod_{k=1,2}\|u_{\pm (\lambda _{0}+ I_{j_{\pi(k)}})} \|_{L^4_{x,t}}\nonumber\\
&\quad\quad \cdot \prod_{k=3,4}\|u_{\pm I_{j_{\pi(k)}}} \|_{L^4_{x,t}}\| u_{\pm I_{j_{\pi(5)}}} \|_{L^\infty_{x,t}}\nonumber\\
&\lesssim T^\eps \sum_{\lambda _{0},j _{{\pi(k)}}\geqslant 0 ,k=1,2;\,j _{{\pi(k)}} \lesssim \ln \langle \lambda _{0} \rangle ,k=3,4,5 }\left \langle \lambda _{0} \right \rangle^{1/2}\|v_{\lambda _0} \|_{V^2 _S}\nonumber\\
&\quad\quad \cdot \prod_{k=1,2}2^{j_{\pi(k)}(1/4-1/q+\varepsilon )}\left \langle \lambda _0+2^{j_{\pi(k)}} \right \rangle ^{-3/4}\|u\|_{X^{1/2}_{q,S}(\pm (\lambda_0+I_{j_{\pi(1)}}))}\|u\|_{X^{1/2}_{q,S}}\nonumber\\
&\quad\quad \cdot \prod_{k=3,4} 2^{j_{\pi(k)}(-1/2-1/q+\varepsilon )}\cdot  \|u\|^2_{X^{1/2}_{q,S}}\cdot 2^{j_{\pi(5)}(1/2-1/q)}\|u\|_{X^{1/2}_{q,S}}\nonumber\\
&\lesssim T^\eps \sum_{\lambda _{0},j _{{\pi(1)}}\geqslant 0}2^{j_{\pi(1)}(-1/2-1/q+\varepsilon )}
\|v_{\lambda _0} \|_{V^2 _S}\|u\|_{X^{1/2}_{q,S}(\pm (\lambda_0+I_{j_{\pi(1)}}))}\|u\|^4_{X^{1/2}_{q,S}}\nonumber\\
&\lesssim T^\eps \left \| v \right \|_{Y_{q',S}^{0}} \left \| u \right \|_{X_{q,S}^{1/2}}^{5},
\end{align}
where we first use
\begin{align*}
    &\| u_{\pm I_{j_{\pi(5)}}}\|_{L^\infty _{x,t}}\lesssim 2^{j_{\pi(5)}/2}\| u_{\pm I_{j_{\pi(5)}}}\|_{V^2_S} \quad\text{and}\quad    &\left \langle \lambda _0+2^{j_{\pi(2)}} \right \rangle ^{-3/4}\lesssim  \langle\lambda _0\rangle^{-1/2}2^{j_{\pi(2)}(-1/4)},
\end{align*}
then take the summation on $j_{\pi(k)}\,(k=2,\dots ,5)$ and apply H\"older's inequality on $\lambda_0$. Finally we take the summation on $j_{\pi(1)}$ for $0<\varepsilon <1/q$.

{\bf Step 4.} We assume that $\lambda_0$ is the fourth, or five largest one, or the minimal one in $|\lambda_0|,...,|\lambda_5|$. We can apply the similar decomposition to \eqref{deco}, then take $L^\infty_{x,t}$ norms for $v_{\lambda _0}$ and the minimal $u_{\lambda_{\pi(5)}}$, take $L^4_{x,t}$ norms for the remaining four items. It is similar to Step 3, so we omit the details. Now the proof is completed.
\end{proof}

\section{Global well-posedness}
In this section, we exploit the conservation laws and a priori estimates in modulation spaces of the equation \eqref{4NLS} to extend the local well-posedness established in the previous section to global well-posedness.
\subsection{Conservation laws}
To guarantee the convergence of the series $\alpha(\kappa;u(t))$ and so that we can differentiate it term by term, we estimate its leading term and generator in the following lemma, which was mentioned in paper \cite{OhWang20} (Lemma A.7) without a proof.
\begin{lem}\label{le1}	For $\kappa \in \mathbb{C}$ with $\mathrm{Re}\, \kappa >0$ and $u\in \mathcal{S} \left (\mathbb{R}   \right ) $, we have
	  \begin{align}
	  \mathrm{Re}\,\mathrm{tr}\left \{ \left ( \kappa -\partial _{x}  \right )^{-1}u \left ( \kappa +\partial _{x}  \right )^{-1}\overline{u}   \right \}&=\int \frac{2\left (\mathrm{Re}\,\kappa   \right )\left | \widehat{u}\left ( \xi +2\mathrm{Im}\,\kappa  \right )   \right |^{2} }{4\left (\mathrm{Re}\,\kappa   \right )^{2}+\xi ^{2}  }  d\xi  ;\label{k1}\\
	  \left \| \left ( \kappa -\partial _{x}  \right )^{-\frac{1}{2} }u \left ( \kappa +\partial _{x}  \right )^{-\frac{1}{2}} \right \|_{\mathfrak{I}_{2}\left ( \mathbb{R}  \right )   }^{2}&\sim \int\mathrm{log}\left ( 4+\frac{\xi ^{2} }{\left (\mathrm{Re}\,\kappa   \right )^{2}}  \right )   \frac{\left | \widehat{u}\left ( \xi +2\mathrm{Im}\,\kappa  \right )   \right |^{2} }{\sqrt{4\left (\mathrm{Re}\,\kappa   \right )^{2}+\xi ^{2}  } } d\xi  \label{k2}.
	  \end{align}
\end{lem}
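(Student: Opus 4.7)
The plan for both statements is to pass to the Fourier side and invoke the trace and Hilbert--Schmidt formulas from Proposition \ref{lemchen}. Each of $(\kappa\pm\partial_x)^{-s}$ is a Fourier multiplier with symbol $(\kappa\mp i\xi)^{-s}$ (well-defined thanks to $\mathrm{Re}\,\kappa>0$), while multiplication by $u$ convolves by $\hat u$ on the Fourier side; composing these gives the Fourier kernels of the two operators explicitly, up to constants dictated by the Fourier normalization.

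For (k1), the Fourier kernel of $A:=(\kappa-\partial_x)^{-1}u(\kappa+\partial_x)^{-1}\bar u$ is proportional to
$$
m(\xi,\eta)\;=\;\frac{1}{\kappa-i\xi}\int\frac{\hat u(\xi-\zeta)\,\overline{\hat u(\eta-\zeta)}}{\kappa+i\zeta}\,d\zeta.
$$
Applying the trace identity from Proposition \ref{lemchen}, substituting $\tau=\xi-\zeta$ in the inner integral and swapping the order of integration, the only non-trivial ingredient is
$$
\int\frac{d\xi}{(\kappa-i\xi)(\kappa+i\xi-i\tau)}\;=\;\frac{2\pi}{2\kappa-i\tau},
$$
which I would evaluate by residues: for $\mathrm{Re}\,\kappa>0$ the only pole in the upper half-plane is $\xi=\tau+i\kappa$. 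Taking real parts produces $\mathrm{Re}(2\kappa-i\tau)^{-1}=2\mathrm{Re}\,\kappa/(4(\mathrm{Re}\,\kappa)^2+(\tau-2\mathrm{Im}\,\kappa)^2)$, and the translation $\tau=\xi+2\mathrm{Im}\,\kappa$ yields (k1).

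For (k2), the Fourier kernel of $B:=(\kappa-\partial_x)^{-1/2}u(\kappa+\partial_x)^{-1/2}$ is proportional to
$$
\tilde m(\xi,\eta)\;=\;\frac{\hat u(\xi-\eta)}{(\kappa-i\xi)^{1/2}(\kappa+i\eta)^{1/2}},
$$
so Proposition \ref{lemchen} gives
$$
\|B\|_{\mathfrak{I}_2}^2\;\sim\;\iint\frac{|\hat u(\xi-\eta)|^2}{|\kappa-i\xi|\,|\kappa+i\eta|}\,d\xi\,d\eta.
$$
Writing $\kappa=a+ib$ and introducing $\mu=\xi-\eta$, $\nu=\eta+b$, and then $\xi'=\mu-2b$ absorbs the imaginary part of $\kappa$ into the shift $\hat u(\,\cdot+2b)$ appearing in the target, reducing the whole matter to the two-sided bound
$$
I(\xi')\;:=\;\int\frac{d\nu}{\sqrt{a^2+\nu^2}\,\sqrt{a^2+(\xi'+\nu)^2}}\;\sim\;\frac{\log\bigl(4+(\xi'/a)^2\bigr)}{\sqrt{4a^2+(\xi')^2}}.
$$

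Verifying this asymptotic is the main technical point and the only place where genuine work is required. Rescaling $\nu=at$ reduces matters to $J(\beta):=\int dt\,((1+t^2)(1+(\beta+t)^2))^{-1/2}\sim\log(4+\beta^2)/\sqrt{4+\beta^2}$ with $\beta=\xi'/a$, and for $|\beta|\lesssim 1$ both sides are clearly comparable to positive constants. For $|\beta|\gg1$ I would centre by $s=t+\beta/2$, so that the integrand becomes $((1+(s-c)^2)(1+(s+c)^2))^{-1/2}$ with $c=\beta/2$, and split the line into the outer region $|s|\ge c+1$, the two endpoint regions $|s\pm c|\le 1$, and the middle region $|s|\le c-1$. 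The outer and endpoint pieces each contribute only $O(1/c)$, whereas in the middle region the integrand is comparable to $(c^2-s^2)^{-1}$ and produces the dominant term $c^{-1}\log c$. Summing these pieces and undoing the rescaling gives the claimed asymptotic, and this careful splitting of the $\nu$-integral is the hardest step in the entire proof.
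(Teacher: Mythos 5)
Your overall strategy mirrors the paper's: pass to the Fourier side via Proposition \ref{lemchen}, reduce to an explicit double integral, and then for \eqref{k1} evaluate a contour integral while for \eqref{k2} estimate a one-dimensional integral of the form $\int dt\,((1+t^2)(1+(\beta+t)^2))^{-1/2}$. For \eqref{k1} your route is a genuine (and arguably cleaner) variant of the paper's: the paper first multiplies by the conjugate of the denominator to isolate $\mathrm{Re}\,\mathrm{tr}$, then introduces the change of variables $\xi-\mathrm{Im}\,\kappa = (\mathrm{Re}\,\kappa)(x+y)$, $\eta+\mathrm{Im}\,\kappa=(\mathrm{Re}\,\kappa)(x-y)$, and finally evaluates $\int (1+x^2-y^2)\big([1+(x+y)^2][1+(x-y)^2]\big)^{-1}dx = \pi/(1+y^2)$ by residues; you instead perform the $\xi$-integral directly by residues while keeping $\kappa$ complex, obtaining $\int d\xi\,\big((\kappa-i\xi)(\kappa+i\xi-i\tau)\big)^{-1}=2\pi/(2\kappa-i\tau)$, and only then take real parts. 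Both are legitimate and give the same formula; yours avoids the intermediate change of variables.

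For \eqref{k2} your reduction to $J(\beta)$ and the rescaling $\nu=at$ are exactly what the paper does (the paper's form is $\int dx\,\big((1+(x+y)^2)(1+(x-y)^2)\big)^{-1/2}$, which becomes your $J(\beta)$ after $\beta=2y$ and a shift; note $\sqrt{4+\beta^2}=2\sqrt{1+y^2}$ so the stated targets agree up to constants). The one place you go wrong is the claim that the outer region $|s|\geq c+1$ contributes only $O(1/c)$: in fact, substituting $r=s-c$ and noting that for $1\leq r\leq 2c$ the integrand is comparable to $1/(r\cdot c)$, the outer region contributes $\Theta(\log c/c)$, the same order as your middle-region term. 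Your asymptotic conclusion survives nonetheless, because every piece is $O(\log c/c)$ (giving the upper bound) and the middle region alone already supplies $\gtrsim \log c/c$ (giving the lower bound), but the characterization of the outer contribution as negligible relative to the middle one is incorrect. The paper's four-way split into $|x|>3|y|/2$, $|x|\leq|y|/2$, $|x\pm y|\leq|y|/2$ is cleaner precisely because it assigns the full logarithmic contribution to the two near-pole pieces $|x\pm y|\leq|y|/2$ and leaves the truly outer piece $|x|>3|y|/2$ genuinely of size $O(1/|y|)$; your outer region $|s|\geq c+1$ still contains most of one near-pole window, which is where the extra $\log c$ comes from.
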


\begin{proof}	From Proposition \ref{lemchen}, we know that
	  \begin{align}
\mathrm{tr}\left \{ \left ( \kappa -\partial _{x}  \right )^{-1}u \left ( \kappa +\partial _{x}  \right )^{-1}\overline{u}   \right \}=&\frac{1}{2\pi }\iint\frac{\left | \hat{u}(\xi -\eta ) \right |^{2}  }{(\kappa -i\xi )(\kappa+i\eta )}d\xi d \eta\\\nonumber
	  =&\frac{1}{2\pi }\iint\frac{\left | \hat{u}(\xi -\eta ) \right |^{2}d\xi d \eta }{\left [\mathrm{Re}\kappa-i\left ( \xi -\mathrm{Im}\kappa \right )   \right ]\cdot \left [\mathrm{Re}\kappa+i\left (\eta +\mathrm{Im}\kappa \right )   \right ] }.
	  \end{align}
Multiplying the numerator and denominator by the conjugate of the denominator, we can obtain that
\begin{align}
\mathrm{LHS}\,\eqref{k1}
	  =\frac{1}{2\pi }\iint\frac{\left [ \left ( \mathrm{Re}\kappa \right )^{2}+(\xi -\mathrm{Im}\kappa)(\eta +\mathrm{Im}\kappa)\right ]\cdot \left | \hat{u}(\xi -\eta ) \right |^{2} }{\left [\left ( \mathrm{Re}\kappa \right )^{2}+ (\xi -\mathrm{Im}\kappa)^{2}  \right ]\cdot \left [\left ( \mathrm{Re}\kappa \right )^{2}+ (\eta  +\mathrm{Im}\kappa)^{2}  \right ] }d\xi d \eta ,
	  \end{align}
then by choosing $\xi-\mathrm{Im}\kappa=(\mathrm{Re}\kappa ) ( x+y )$ and $\eta +\mathrm{Im}\kappa=(\mathrm{Re}\kappa )( x-y )$, we have
	  \begin{align}
	  \mathrm{LHS}\,\eqref{k1}=&\frac{1}{\pi }\iint\frac{(1+x^{2}-y^{2} )\left | \hat{u}\left (2\mathrm{Im}\kappa+2(\mathrm{Re}\kappa)y  \right )  \right |^{2}  }{\left [ 1+(x+y)^{2}  \right ]\cdot \left [ 1+(x-y)^{2}\right ]  }dxdy \\\nonumber
	  =&\int \frac{\left | \hat{u}\left (2\mathrm{Im}\kappa+2(\mathrm{Re}\kappa)y  \right )  \right |^{2} }{1+y^{2} }dy,
  \end{align}
here we used the fact that
	  \begin{align}
	  \int_{\mathbb{R} }\frac{1+x^{2}-y^{2}}{\left [ 1+(x+y)^{2}  \right ]\cdot \left [ 1+(x-y)^{2}\right ]} dx=\frac{\pi }{1+y^{2} },
	  \end{align}
which can be calculated by using the residue theorem. Finally, we make the change of variables $2(\mathrm{Re}\kappa)y=\xi$ and get the conclusion \eqref{k1}.

	For \eqref{k2}, by applying (\ref{A}) and cycling the trace, we have
	  \begin{align*}
	\mathrm{LHS}\,\eqref{k2}=&\mathrm{tr}\left \{ \left ( \kappa -\partial _{x}  \right )^{-\frac{1}{2} } u \left ( \kappa +\partial _{x}  \right )^{-\frac{1}{2}}\left ( \overline{\kappa}  -\partial _{x}  \right )^{-\frac{1}{2} }\overline{u}  \left ( \overline{\kappa}  +\partial _{x}  \right )^{-\frac{1}{2}}\right  \}\\
	  =&\mathrm{tr}\left \{\left [ \left | \kappa  \right | ^{2}-\partial _{x}^{2} +2i(\mathrm{Im}\kappa)\partial _{x} \right ]^{-\frac{1}{2} } u\left [\left | \kappa  \right | ^{2}-\partial _{x}^{2} -2i(\mathrm{Im}\kappa)\partial _{x}  \right ]^{-\frac{1}{2} }\overline{u}\right \},
	  \end{align*}
Therefore, from Proposition \ref{lemchen} and taking the same change of variables as before, it follows that
	  \begin{align*}
	\mathrm{LHS}\,\eqref{k2}=&\frac{1}{2\pi }\iint\frac{\left | \hat{u}(\xi -\eta ) \right |^{2}d\xi d\eta}{\sqrt{\left [\left ( \mathrm{Re}\kappa \right )^{2}+ (\xi -\mathrm{Im}\kappa)^{2}  \right ]\cdot \left [\left ( \mathrm{Re}\kappa \right )^{2}+ (\eta  +\mathrm{Im}\kappa)^{2}  \right ]} }  \\
	  =&\frac{1}{\pi }\iint\frac{\left | \hat{u}(2(\mathrm{Re}\kappa)y +2\mathrm{Im}\kappa ) \right |^{2}}{\sqrt{(1+(x+y)^{2} )(1+(x-y)^{2} )} }dxdy\\
	  \sim &\int\left | \hat{u}(2(\mathrm{Re}\kappa)y +2\mathrm{Im}\kappa ) \right |^{2}\frac{\mathrm{log}(4+4y^{2} ) }{\sqrt{1+y^{2}}}dy\\
	  =&\int\mathrm{log}\left ( 4+\frac{\xi ^{2} }{\left (\mathrm{Re}\,\kappa   \right )^{2}}  \right )   \frac{\left | \widehat{u}\left ( \xi +2\mathrm{Im}\,\kappa  \right )   \right |^{2} }{\sqrt{4\left (\mathrm{Re}\,\kappa   \right )^{2}+\xi ^{2}  } }d\xi ,
	  \end{align*}
where we used the fact that
 $$\int \frac{1}{\sqrt{(1+(x+y)^{2} )(1+(x-y)^{2} )}}dx\sim\frac{\mathrm{log}(4+4y^{2} ) }{\sqrt{1+y^{2}}},$$
which can be obtained by breaking the region of integration into the pieces $|x|>3|y|/2$, $|x|\leq|y|/2$, $|x-y|\leq|y|/2$ and $|x+y|\leq|y|/2$.
\end{proof}

We are now ready to give the conservation of the perturbation determinant $\alpha(\kappa;u(t))$.

\begin{prop}[Conservation of $\alpha(\kappa;u)$]\label{conser0}
	Let $u(t)$ be a Schwartz solution to 4NLS \eqref{4NLS}. Then
	\begin{equation*}
	\ddt \alpha(\kappa;u(t)) = 0
	\end{equation*}
as soon as $\mathrm{Re}\, \kappa >0$ is sufficiently large such that
	\begin{equation}\label{error}
	\int_{\mathbb{R} } \mathrm{log}\left ( 4+\frac{\xi ^{2} }{\left (\mathrm{Re}\,\kappa   \right )^{2}}  \right )   \frac{\left | \widehat{u}\left ( \xi +2\mathrm{Im}\,\kappa  \right )   \right |^{2} d\xi   }{\sqrt{4\left (\mathrm{Re}\,\kappa   \right )^{2}+\xi ^{2}  } }\le c
	\end{equation}
holds for some absolute constant $c>0$.
\end{prop}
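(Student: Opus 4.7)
The plan is to analyze the series \eqref{alpha} as a function of $t$: secure absolute convergence from the smallness condition, justify termwise differentiation, and invoke the Lax pair structure to obtain cancellation.

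\textbf{Convergence.} Setting $T(\kappa;u) := (\kappa-\partial_x)^{-1/2} u (\kappa+\partial_x)^{-1} \bar u (\kappa-\partial_x)^{-1/2}$, I would factor $T = AB$ with $A := (\kappa-\partial_x)^{-1/2}u(\kappa+\partial_x)^{-1/2}$ and $B := (\kappa+\partial_x)^{-1/2}\bar u(\kappa-\partial_x)^{-1/2}$. Identity \eqref{k2} of Lemma \ref{le1} bounds both $\|A\|_{\mathfrak{I}_2}^2$ and $\|B\|_{\mathfrak{I}_2}^2$ by the integral appearing in \eqref{error}, so $T$ is trace class with $\|T\|_{\mathfrak{I}_1} \leq \|A\|_{\mathfrak{I}_2}\|B\|_{\mathfrak{I}_2}\lesssim c$. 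Combining this with $\|T\|_{\mathrm{op}}\leq\|T\|_{\mathfrak{I}_1}$ yields $|\mathrm{tr}(T^\ell)|\leq\|T\|_{\mathfrak{I}_1}\|T\|_{\mathrm{op}}^{\ell-1}\lesssim c^\ell$, so the series converges absolutely once $c$ is sufficiently small. Since $u$ is Schwartz, the integrand in \eqref{error} is continuous in $t$; smallness therefore persists on a short time interval, which legitimizes differentiating the series term by term there.

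\textbf{Lax-pair cancellation.} The series \eqref{alpha} arises from the formal identity $\alpha(\kappa;u)=-\mathrm{Re}\log\det(I+L_0^{-1}V)$, where $L=L_0+V$ decomposes the Lax operator into its diagonal part $L_0=\mathrm{diag}(-\partial_x+\kappa,-\partial_x-\kappa)$ and off-diagonal part $V=\bigl(\begin{smallmatrix}0 & u\\ -\bar u & 0\end{smallmatrix}\bigr)$; only even powers of $L_0^{-1}V$ contribute to the trace, reproducing \eqref{alpha}. Differentiating under the trace and invoking $(I+L_0^{-1}V)^{-1}L_0^{-1}=L^{-1}$ together with the Lax identity $\dot V=\dot L=[P,L]$ yield, formally,
\begin{equation*}
\ddt\,\alpha(\kappa;u(t)) = -\mathrm{Re}\,\mathrm{tr}\bigl\{L^{-1}[P,L]\bigr\} = 0
\end{equation*}
by cyclicity of the trace. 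I would make this rigorous by computing $\ddt\,\mathrm{tr}\{T^\ell\}$ at each level $\ell$, rewriting it via Proposition \ref{lemchen} as a multilinear integral in $u$, $\bar u$, $\dot u$, $\dot{\bar u}$, substituting $\dot u$ from \eqref{4NLS}, and summing over $\ell$ (permissible by the convergence bound).

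\textbf{Main obstacle.} The cyclic identity $\mathrm{tr}\{L^{-1}PL\}=\mathrm{tr}\{P\}$ is only formal because $P$ is an unbounded differential operator. One route is to regularize $P$ by smooth cutoffs and pass to the limit using the Schwartz decay of $u$; a more direct route, sidestepping $P$ altogether, is to exhibit each integrand in $\ddt\,\mathrm{tr}\{T^\ell\}$ as an exact $x$-derivative via integration by parts. The latter succeeds precisely because the fixed coefficients of $F(u)$ in \eqref{4NLS} are those dictated by complete integrability, so the remaining bulk terms cancel algebraically at each $\ell$. This combinatorial cancellation, rather than any analytic subtlety, is where the bulk of the work lies.
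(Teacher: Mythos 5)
Your convergence argument via the factorization $T=AB$ with $A=(\kappa-\partial_x)^{-1/2}u(\kappa+\partial_x)^{-1/2}$ and $\|A\|_{\mathfrak{I}_2}$ controlled by \eqref{k2} matches the paper's reasoning, and your use of the $\mathfrak{I}_1$-norm to justify termwise differentiation is sound. The gap is in the cancellation mechanism. Your first route (directly invoking $\dot L=[P,L]$ and cyclicity) is genuinely different from the paper's and, as you note, founders on the unboundedness of $P$; indeed the paper deliberately sidesteps $P$, stating that its precise form "will play no role." The trouble is with your fallback route. You assert that one should "exhibit each integrand in $\frac{d}{dt}\mathrm{tr}\{T^\ell\}$ as an exact $x$-derivative" so that "the remaining bulk terms cancel algebraically at each $\ell$." That is false: each $\frac{d}{dt}\mathrm{tr}\{T^\ell\}$ is \emph{not} individually zero, and its integrand is not a total $x$-derivative. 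What the paper actually establishes is a telescoping cancellation across three consecutive orders. Concretely, after inserting the equation, the linear $u_{xxxx}$ part is rewritten via the operator identity
\begin{align*}
u_{xxxx}=(\partial_x^4+4\kappa\partial_x^3+\cdots)u+u(\partial_x^4-4\kappa\partial_x^3+\cdots)+(\kappa-\partial_x)[\,\cdots\,](\kappa+\partial_x),
\end{align*}
and the trailing $(\kappa\mp\partial_x)$ factors absorb adjacent resolvents, transferring a contribution from the $\ell$-th trace to the $(\ell-1)$-th. Iterating this, one finds (the paper's \eqref{cancel1}) that the quartic-derivative contribution with weight $A(u)^\ell$, the cubic-derivative contributions with weight $A(u)^{\ell-1}$, and the quintic contribution with weight $A(u)^{\ell-2}$ together vanish, while the boundary cases $\ell=0,1$ are handled separately (\eqref{cancel},\eqref{cancel2}). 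So any attempt to close the argument $\ell$ by $\ell$, as you propose, will leave uncancelled residues; you must organize the computation so that those residues are recognized as the quartic and quintic terms at adjacent orders. The combinatorics you anticipate is real, but it lives between levels, not within a single $\ell$.
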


\begin{proof}
	Since \eqref{k2} and the condition \eqref{error} hold, the series of $\alpha(\kappa;u(t))$ converges and can be differenetiated term by term, so from \eqref{4NLS} we have
	\begin{equation*}
	\begin{split}
      \frac{d}{dt} \alpha(\kappa;u(t)) ={\rm Re}\sum_{\ell=1}^{\infty }{\rm tr }&\left \{\left [(\kappa +\partial_x)^{-1}\overline{u}(\kappa -\partial_x)^{-1}u \right]^{\ell-1}
      \right.\\ &\left.
      \left [\left (\kappa +\partial_x   \right )^{-1} \overline{u}_t\left (\kappa -\partial_x   \right )^{-1} u+\left (\kappa +\partial_x \right )^{-1}\overline{u}\left (\kappa -\partial_x  \right )^{-1} u_t \right ]  \right \}\\
      ={\rm Im}\sum_{\ell=1}^{\infty }{\rm tr }&\left \{ \left [(\kappa +\partial_x)^{-1}\overline{u}(\kappa -\partial_x)^{-1}u \right]^{\ell-1}(\kappa +\partial_{x}  )^{-1}
  \right.\\ & \left.
  (\overline{u}_{xxxx}+8\overline{u}_{xx}\left | u \right |^{2}+2u_{xx}\overline{u}^{2}+6\overline{u }_{x}^{2}u+4\overline{u}\left | u_{x}  \right |^{2}+6\overline{u}\left | u \right |^{4})(\kappa -\partial _{x})^{-1}u
  \right.\\ & \left.
  -\left [(\kappa +\partial_x)^{-1}\overline{u}(\kappa -\partial_x)^{-1}u \right]^{\ell-1}(\kappa +\partial _{x} )^{-1}\overline{u}(\kappa -\partial _{x} )^{-1}
  \right.\\ & \left.
  (u_{xxxx}+8u_{xx}\left | u \right |^{2}+2\overline{u}_{xx}{u}^{2}+6{u}_{x}^{2}\overline{u}+4u\left | u_{x}  \right |^{2}+6u\left | u \right |^{4})\right \}.
	\end{split}
	\end{equation*}
For convenience, denote $A(u):= (\kappa +\partial_{x})^{-1}\overline{u}(\kappa-\partial_{x})^{-1}u$, it suffices to show that
\begin{align}\label{cancel}
{\rm tr}\left \{(\kappa +\partial _{x})^{-1}\overline{u}_{xxxx}(\kappa -\partial _{x})^{-1}u
-(\kappa +\partial _{x})^{-1}\overline{u}(\kappa -\partial _{x})^{-1}{u}_{xxxx}  \right \}=0,
\end{align}
\begin{align}\label{cancel2}
&{\rm tr}\left \{ A(u)\Big[(\kappa +\partial _{x} )^{-1}\overline{u}_{xxxx}(\kappa -\partial _{x} )^{-1}u-(\kappa +\partial _{x} )^{-1}\overline{u}(\kappa -\partial _{x} )^{-1}u_{xxxx}\Big]    \right \}\nonumber\\
+&{\rm tr}\Bigl \{ (\kappa +\partial_{x}  )^{-1}(8\overline{u}_{xx}\left | u \right |^{2} +2u_{xx}\overline{u}^{2}+6\overline{u}_{x}^{2}u+4\overline{u}\left | u_{x}  \right |^{2})(\kappa -\partial_{x}  )^{-1}u \nonumber\\
&\quad\quad-(\kappa +\partial_{x}  )^{-1}\overline{u} (\kappa -\partial_{x}  )^{-1}(8u_{xx}\left | u \right |^{2} +2\overline{u}_{xx}u^{2}+6u_{x}^{2}\overline{u}+4u\left|u_{x}\right|^{2})\Bigr \}=0,
\end{align}
and
\begin{align}\label{cancel1}
&{\rm tr}\left \{ A(u)^\ell\Big[(\kappa +\partial _{x} )^{-1}\overline{u}_{xxxx}(\kappa -\partial _{x} )^{-1}u-(\kappa +\partial _{x} )^{-1}\overline{u}(\kappa -\partial _{x} )^{-1}u_{xxxx}\Big]    \right \}\nonumber\\
+&{\rm tr}\Bigl \{ A(u)^{\ell-1}\Bigl[(\kappa +\partial_{x}  )^{-1}(8\overline{u}_{xx}\left | u \right |^{2} +2u_{xx}\overline{u}^{2}+6\overline{u}_{x}^{2}u+4\overline{u}\left | u_{x}  \right |^{2})(\kappa -\partial_{x}  )^{-1}u \nonumber\\
&\quad\quad-(\kappa +\partial_{x}  )^{-1}\overline{u} (\kappa -\partial_{x}  )^{-1}(8u_{xx}\left | u \right |^{2} +2\overline{u}_{xx}u^{2}+6u_{x}^{2}\overline{u}+4u\left|u_{x}\right|^{2})\Bigr]\Bigr \}\nonumber\\
+&6{\rm tr}\Bigl \{ A(u)^{\ell-2} \Bigl[(\kappa +\partial_{x}  )^{-1}\overline{u}\left | u \right |^{4}(\kappa -\partial_{x}  )^{-1}u-(\kappa +\partial_{x}  )^{-1}\overline{u} (\kappa -\partial_{x}  )^{-1}u\left | u \right |^{4}\Bigr]\Bigr \}=0
\end{align}
for all $\ell\geq2$.

For \eqref{cancel}, we use Lemma \ref{lemchen} and obtain that
\begin{align*}
\text{LHS\ of\ } \eqref{cancel}=\iint\frac{\big((\xi_1-\xi_2)^4-(\xi_2-\xi_1)^4\big)|\hat{u}(\xi_1-\xi_2)|^2}{(\kappa+i\xi_1)
(\kappa-i\xi_2)}d\xi_1d\xi_2=0.
\end{align*}

To demonstrate the validity of \eqref{cancel1} and absorb $(\kappa +\partial _{x} )^{-1}$ and $(\kappa -\partial _{x} )^{-1}$ from either side, we rewrite
\begin{align*}
u_{xxxx}&=\partial ^{4} _{x}u-4\partial ^{3}_{x} u\partial _{x}+6\partial ^{2}_{x}u\partial ^{2}_{x}-4\partial_{x}u\partial ^{3}_{x}+u\partial ^{4}_{x} \\
&=(\partial ^{4} _{x}+4\kappa\partial ^{3}_{x}+6\kappa^{2}\partial ^{2}_{x}+4\kappa ^{3}\partial _{x} -7\kappa^{4})u+u(\partial ^{4} _{x}-4\kappa\partial ^{3}_{x}+6\kappa^{2}\partial ^{2}_{x}-4\kappa ^{3}\partial _{x} -7\kappa^{4})\\
&\quad+(\kappa -\partial _{x} )[4\partial ^{2}_{x}u-6\partial _{x}u\partial _{x}+4u\partial ^{2} _{x}+10\kappa u_{x}+14\kappa ^{2}u](\kappa +\partial _{x} ),
\end{align*}
\begin{align*}
\overline{u}_{xxxx}&=\partial ^{4} _{x}\overline{u}-4\partial ^{3}_{x}\overline{u}\partial _{x}+6\partial ^{2}_{x}\overline{u}\partial ^{2}_{x}-4\partial_{x}\overline{u}\partial ^{3}_{x}+\overline{u}\partial ^{4}_{x} \\
&=\overline{u}(\partial ^{4} _{x}+4\kappa\partial ^{3}_{x}+6\kappa^{2}\partial ^{2}_{x}+4\kappa ^{3}\partial _{x} -7\kappa^{4})+(\partial ^{4} _{x}-4\kappa\partial ^{3}_{x}+6\kappa^{2}\partial ^{2}_{x}-4\kappa ^{3}\partial _{x} -7\kappa^{4})\overline{u}\\
&\quad+(\kappa +\partial _{x} )[4\partial ^{2} _{x}\overline{u}-6\partial _{x}\overline{u}\partial _{x}+4\overline{u}\partial ^{2}_{x}-10\kappa \overline{u}_{x}+14\kappa ^{2}\overline{u}](\kappa -\partial _{x} ).
\end{align*}
The second identity here follows from the first by replacing $u$ with $\overline{u}$ and $\kappa$ with $-\kappa$.
When the first two terms from each identity are inserted into \eqref{cancel1}, their contributions cancel each other out, then we have 	
\begin{align}\label{cancel3}
&{\rm tr}\left \{ A(u)^\ell\Big[(\kappa +\partial _{x} )^{-1}\overline{u}_{xxxx}(\kappa -\partial _{x} )^{-1}u-(\kappa +\partial _{x} )^{-1}\overline{u}(\kappa -\partial _{x} )^{-1}u_{xxxx}\Big]    \right \}\nonumber\\
=&{\rm tr}\left \{ A(u)^{\ell-1}\Big[(\kappa +\partial _{x} )^{-1}\overline{u}(\kappa -\partial _{x} )^{-1}T_{1}(u)-(\kappa +\partial _{x} )^{-1}T_{2}(u)(\kappa -\partial _{x} )^{-1}u\Big]    \right \},
\end{align}
where
\begin{align*}	
T_{1}(u)= 4u\partial ^{2} _{x}| u |^{2}-6u\partial _{x}\overline{u} \partial _{x}u+4| u|^{2}   \partial ^{2}_{x}u-10\kappa u^{2}\overline{u}_{x}   +14\kappa ^{2}| u |^{2}u,\\
T_{2}(u)= 4\overline{u} \partial ^{2}_{x} | u |^{2}  -6\overline{u}\partial _{x}u \partial _{x}\overline{u}+4| u  |^{2}\partial ^{2} _{x}\overline{u} +10\kappa \overline{u}^{2}  u_{x}   +14\kappa ^{2}| u  |^{2}\overline{u}.
\end{align*}
Notice that
\begin{align*}	
u\partial ^{2} _{x}| u |^{2}&=\partial ^{2}_{x}u| u |^{2}-2\partial _{x}u_{x}|u|^{2}+u_{xx}| u |^{2},\\
| u|^{2} \partial ^{2}_{x}u&=| u |^{2}u_{xx}+2|u|^{2}u_{x}\partial _{x}+| u |^{2}u\partial ^{2}_{x},\\
u\partial _{x}\overline{u} \partial _{x}u&=(\partial _{x}u-u_{x})\overline{u}(u_{x}+u\partial _{x})=\partial _{x}| u |^{2}u\partial _{x}+(| u |^{2}u_{x} )_{x}-u_{x}^{2}\overline{u},
\end{align*}
hence we obtain
\begin{align}\label{cancel4}	
T_{1}(u)&=8u_{xx}| u |^{2}+6u_{x}^{2}\overline{u}+ 4\partial ^{2}_{x}| u |^{2}u+4| u |^{2}u\partial ^{2}_{x}-14(| u |^{2}u_{x} )_{x}-6\partial _{x}| u |^{2}u\partial _{x}\nonumber\\
&\quad-10\kappa u^{2}\overline{u}_{x}+14\kappa ^{2}| u |^{2}u.
\end{align}
Similarly,
\begin{align}\label{cancel5}
T_{2}(u)&=8\overline{u}_{xx}| u |^{2}+6\overline{u}_{x}^{2}u+4\partial ^{2}_{x}| u|^{2}\overline{u}+4| u |^{2}\overline{u}\partial ^{2}_{x}-14(| u |^{2}\overline{u}_{x} )_{x}-6\partial _{x}| u |^{2}\overline{u}\partial _{x}\nonumber\\
&\quad+10\kappa \overline{u} ^{2}u_{x}+14\kappa ^{2}| u |^{2}\overline{u}.
\end{align}
The first two terms in \eqref{cancel4} and \eqref{cancel5} are exactly the second derivative terms that appear in \eqref{cancel1}. For the remaining two second derivative terms in \eqref{cancel1}, note that
\begin{align}\label{cc}
2\overline{u}_{xx}u^{2}+4u| u_{x} |^{2}=2(u^{2}\overline{u} _{x})_{x}, \quad\quad  2u_{xx}\overline{u}^{2}+4\overline{u} | u_{x}  |^{2}=2(\overline{u}^{2}u_{x})_{x},
\end{align}
we rewrite
\begin{align*}
2(| u |^{2}u_{x} )_{x}&=(| u |^{2}u )_{xx}-(u^2\overline{u}_x)_x,\nonumber\\
\quad{\rm and}\quad(| u |^{2}u )_{xx}&=\partial ^{2}_{x}| u |^{2}u-2\partial _{x}| u |^{2}\overline{u}\partial _{x}+| u |^{2}u\partial ^{2}_{x},
\end{align*}
then we have
\begin{align}\label{cancel6}	
T_{1}(u)&=8u_{xx}| u |^{2}+6u_{x}^{2}\overline{u}+6(u^2\overline{u}_x)_x-2\partial ^{2}_{x}| u |^{2}u-2| u |^{2}u\partial ^{2}_{x}-2(| u |^{2}u_{x} )_{x}+6\partial _{x}| u |^{2}u\partial _{x}\nonumber\\
&\quad-10\kappa u^{2}\overline{u}_{x}+14\kappa ^{2}| u |^{2}u.
\end{align}
Similarly,
\begin{align}\label{cancel7}
T_{2}(u)&=8\overline{u}_{xx}| u |^{2}+6\overline{u}_{x}^{2}u+6(\overline{u}^{2}u_{x})_{x}-2\partial ^{2}_{x}| u|^{2}\overline{u}-2| u |^{2}\overline{u}\partial ^{2}_{x}-2(| u |^{2}\overline{u}_{x} )_{x}+6\partial _{x}| u |^{2}\overline{u}\partial _{x}\nonumber\\
&\quad+10\kappa \overline{u} ^{2}u_{x}+14\kappa ^{2}| u |^{2}\overline{u}.
\end{align}
In order to absorb $(\kappa +\partial _{x} )^{-1}$ and $(\kappa -\partial _{x} )^{-1}$ from either side again, we rewrite
\begin{align}
\partial _{x}| u | ^{2}u\partial _{x}&=-(\kappa -\partial _{x})| u | ^{2}u(\kappa +\partial _{x})+\kappa^{2} | u  | ^{2}u-\kappa (| u  | ^{2}u)_{x}, \label{cancel8}\\
\partial _{x}| u | ^{2}\overline{u}\partial _{x}&=-(\kappa +\partial _{x})| u | ^{2}\overline{u}(\kappa -\partial _{x})+\kappa^{2}| u | ^{2}\overline{u}+\kappa (| u | ^{2}\overline{u})_{x}.  \label{cancel9}
\end{align}
Therefore, inserting \eqref{cc}-\eqref{cancel9} into \eqref{cancel1} and \eqref{cancel3}, we can obtain that
\begin{align}\label{cancel10}
&{\rm tr}\left \{ A(u)^{\ell-1}\Big[(\kappa +\partial _{x} )^{-1}\overline{u}(\kappa -\partial _{x} )^{-1}T_{1}(u)-(\kappa +\partial _{x} )^{-1}T_{2}(u)(\kappa -\partial _{x} )^{-1}u\Big]    \right \},\nonumber\\
+&{\rm tr}\Bigl \{ A(u)^{\ell-1}\Bigl[(\kappa +\partial_{x}  )^{-1}(8\overline{u}_{xx}\left | u \right |^{2} +2u_{xx}\overline{u}^{2}+6\overline{u}_{x}^{2}u+4\overline{u}\left | u_{x}  \right |^{2})(\kappa -\partial_{x}  )^{-1}u \nonumber\\
&\quad\quad-(\kappa +\partial_{x}  )^{-1}\overline{u} (\kappa -\partial_{x}  )^{-1}(8u_{xx}\left | u \right |^{2} +2\overline{u}_{xx}u^{2}+6u_{x}^{2}\overline{u}+4u\left|u_{x}\right|^{2})\Bigr]\Bigr \}\nonumber\\
+&6{\rm tr}\Bigl \{ A(u)^{\ell-2} \Bigl[(\kappa +\partial_{x}  )^{-1}\overline{u}\left | u \right |^{4}(\kappa -\partial_{x}  )^{-1}u-(\kappa +\partial_{x}  )^{-1}\overline{u} (\kappa -\partial_{x}  )^{-1}u\left | u \right |^{4}\Bigr]\Bigr \}\nonumber\\
=&2{\rm tr}\left \{ A(u)^{\ell-1}\Big[(\kappa +\partial _{x} )^{-1}\overline{u}(\kappa -\partial _{x} )^{-1}H_{1}(u)-(\kappa +\partial _{x} )^{-1}H_{2}(u)(\kappa -\partial _{x} )^{-1}u \Big]\right \},
\end{align}
where
\begin{align*}	
H_{1}(u)= 2(u^2\overline{u}_x)_x-\partial ^{2}_{x}| u |^{2}u-| u |^{2}u\partial ^{2}_{x}-(| u |^{2}u_{x} )_{x}-5\kappa u^{2}\overline{u}_{x}+10\kappa ^{2}| u |^{2}u-3\kappa (| u  | ^{2}u)_{x},\\
H_{2}(u)=2(\overline{u}^{2}u_{x})_{x}-\partial ^{2}_{x}| u|^{2}\overline{u}-| u |^{2}\overline{u}\partial ^{2}_{x}-(| u |^{2}\overline{u}_{x} )_{x}+5\kappa \overline{u} ^{2}u_{x}+10\kappa ^{2}| u |^{2}\overline{u}+3\kappa (| u | ^{2}\overline{u})_{x}.
\end{align*}
Thus, it is enough to prove that the right-hand side of \eqref{cancel10} equals zero. This is based on the following identities:
\begin{align}	
(u^2\overline{u}_x)_x-\partial ^{2}_{x}| u |^{2}u-| u |^{2}u\partial ^{2}_{x}&=\partial _{x}\overline{u}_xu^2-u^2\overline{u}_x\partial _{x}-\partial ^{2}_{x}\overline{u}u^2-u^2\overline{u}\partial ^{2}_{x}=-\partial _{x}\overline{u}\partial _{x}u^2-u^2\partial _{x}\overline{u}\partial _{x}\nonumber\\
&=(\kappa -\partial _{x})\overline{u}\partial _{x}u^{2} -u^{2}\partial _{x}\overline{u}(\kappa +\partial _{x})-\kappa (| u  | ^{2}u)_{x}+2\kappa u^{2}\overline{u}_{x},	\nonumber\\
(\overline{u}^{2}u_{x})_{x}-\partial ^{2}_{x}| u|^{2}\overline{u}-| u |^{2}\overline{u}\partial ^{2}_{x}&=\overline{u}^{2} \partial _{x}u(\kappa -\partial _{x})-(\kappa +\partial _{x})u\partial _{x}\overline{u}^{2}+\kappa (| u  | ^{2}\overline{u})_{x}-2\kappa \overline{u}^{2}u_{x},\nonumber\\
(u^2\overline{u}_x)_x-(| u |^{2}u_{x} )_{x}&=-(\kappa -\partial _{x})\big(u^2\overline{u}_x-| u |^{2}u_{x}\big) -\big(u^2\overline{u}_x-| u |^{2}u_{x}\big)(\kappa +\partial _{x})\nonumber\\
&\quad+2\kappa \big(u^2\overline{u}_x-| u |^{2}u_{x}\big),	\nonumber\\
(\overline{u}^{2}u_{x})_{x}-(| u |^{2}\overline{u}_{x} )_{x}&=\big(\overline{u}^{2}u_{x}-| u |^{2}\overline{u}_{x}\big) (\kappa -\partial _{x}) +(\kappa +\partial _{x})\big(\overline{u}^{2}u_{x}-| u |^{2}\overline{u}_{x}\big)\nonumber\\
&\quad-2\kappa \big(\overline{u}^{2}u_{x}-| u |^{2}\overline{u}_{x}\big).\nonumber
\end{align}
By substituting these identities into the right-hand side of equation \eqref{cancel10}, we observe that the combined effect of the first four terms is zero, as well as that of the pair of second terms. Therefore, notice that $u^2\overline{u}_x+2| u |^{2}u_{x}=(| u |^{2}u)_x$, we have
\begin{align*}
{\rm RHS\ of\ }\eqref{cancel10}=10{\rm tr}\Bigl\{ &A(u)^{\ell-1}\Bigl[(\kappa +\partial _{x} )^{-1}\overline{u}(\kappa -\partial _{x} )^{-1}\big(2\kappa ^{2}| u |^{2}u-\kappa (| u  | ^{2}u)_{x}\big)\\
&-(\kappa +\partial _{x} )^{-1}\big(2\kappa ^{2}| u |^{2}\overline{u}+\kappa (| u  | ^{2}\overline{u})_{x}\big)(\kappa -\partial _{x} )^{-1}u \Bigr]\Bigr\}=0.
\end{align*}
This is because the net contribution of the following two terms is zero:
\begin{align*}
2\kappa ^{2}| u |^{2}u-\kappa (|u|^{2}u)_{x}=\kappa | u |^{2}u(\kappa +\partial _{x})+\kappa(\kappa -\partial _{x} ) |u|^{2}u,\\
2\kappa ^{2} | u |^{2}\overline{u}+\kappa (| u |^{2}\overline{u})_{x}=\kappa (\kappa +\partial _{x} )| u |^{2}\overline{u}+\kappa | u |^{2}\overline{u}(\kappa -\partial _{x}).
\end{align*}
The proof of \eqref{cancel1} is now finished. For \eqref{cancel2}, let $\ell=1$ in the preceding calculation, and we only need to observe that when plugging \eqref{cancel8} and  \eqref{cancel9} into  \eqref{cancel2}, the contribution of the two first terms is zero. The proof of Proposition \ref{conser0} is concluded.
\end{proof}

\subsection{Global bounds in Modulation spaces}

The size of the leading term in the series $\alpha(\kappa;u(t))$ can be related to the norm of modulation space, as stated in the following lemma.

\begin{lem}(Equivalent norms)\label{equi} Fix $0\leq s<1-\frac{1}{q}$, $2\leq q <\infty$, and $\kappa_0\geq1$, define
\begin{align}
\| f \|_{Z_{\kappa_0,q}^{s}} := \bigg(\sum_{n\in\mathbb{Z}} \langle n\rangle^{sq} \bigg(\int\frac{\kappa_0^2|\hat{f}(\xi+n)|^2}{4\kappa_0^2+\xi^2}d \xi\bigg)^{q/2}\bigg)^{1/q},
\end{align}
Then, we have
	$$\|f\|_{M^{s}_{2, q} }\lesssim \| f \|_{Z_{\kappa_0,q}^{s}}\lesssim \kappa_0 \|f\|_{M^{s}_{2, q} }.$$
\end{lem}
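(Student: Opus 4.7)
I plan to pass to a discrete convolution form by using the bounded overlap of the partition $\{\psi_m\}_{m \in \mathbb{Z}}$ together with the fact that $\psi_m \hat f$ is supported in $[m-1,m+1]$. With $a_m := \|\Box_m f\|_{L^2}$ and $K(k) := \kappa_0^2/(4\kappa_0^2+k^2)$, this reduction gives
\[
\int \frac{\kappa_0^2|\hat f(\xi+n)|^2}{4\kappa_0^2+\xi^2}\,d\xi \;\sim\; \sum_{m\in\mathbb{Z}} K(n-m)\,a_m^2,
\]
so that $\|f\|_{Z_{\kappa_0,q}^{s}}^q \sim \sum_n \langle n\rangle^{sq} (K\ast a^2)(n)^{q/2}$, while $\|f\|_{M^s_{2,q}}^q = \sum_n \langle n\rangle^{sq} a_n^q$. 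The lower bound is then immediate: since $\kappa_0 \ge 1$, one has $K(k) \ge 1/5$ for $|k| \le 1$, so already the $m=n$ term on the right-hand side bounds $a_n^2$ from above, and summation in $n$ gives $\|f\|_{M^s_{2,q}} \lesssim \|f\|_{Z_{\kappa_0,q}^{s}}$.

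For the upper bound, set $c_m := \langle m\rangle^s a_m$, so that $\|c\|_{\ell^q} = \|f\|_{M^s_{2,q}}$, and aim to prove $\|\langle n\rangle^{2s}(K\ast a^2)\|_{\ell^{q/2}_n} \lesssim \kappa_0^2\|c\|_{\ell^q}^2$. The elementary inequality $\langle n\rangle^{2s} \lesssim \langle m\rangle^{2s} + \langle n-m\rangle^{2s}$ splits the weighted convolution into $(K\ast c^2)(n) + (K_s\ast a^2)(n)$ where $K_s(k) := \langle k\rangle^{2s} K(k)$. The first piece is handled by Young's inequality $\ell^1 \ast \ell^{q/2} \to \ell^{q/2}$: a dyadic split of the sum defining $\|K\|_{\ell^1}$ at $|k|\sim\kappa_0$ yields $\|K\|_{\ell^1} \sim \kappa_0$, so $\|K\ast c^2\|_{\ell^{q/2}} \lesssim \kappa_0\|c\|_{\ell^q}^2 \le \kappa_0^2\|c\|_{\ell^q}^2$.

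The main technical point is the second piece $K_s\ast a^2$, which requires a case split according to the size of $s$. For $0\le s<1/2$ a direct dyadic estimate gives $\|K_s\|_{\ell^1} \sim \kappa_0^{2s+1}$, so Young's $\ell^1 \ast \ell^{q/2}\to\ell^{q/2}$ delivers $\kappa_0^{2s+1}\|a\|_{\ell^q}^2 \le \kappa_0^2\|c\|_{\ell^q}^2$. For $1/2 \le s < 1-1/q$ (which forces $q>2$), the $\ell^1$-norm of $K_s$ diverges, and I instead invoke Young's $\ell^{q/2}\ast\ell^1 \to \ell^{q/2}$: the same dyadic split shows $\|K_s\|_{\ell^{q/2}} \sim \kappa_0^{2s+2/q}$, with finiteness at high frequencies corresponding exactly to the hypothesis $s<1-1/q$. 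The remaining factor $\|a^2\|_{\ell^1} = \|a\|_{\ell^2}^2 = \sum_m c_m^2 \langle m\rangle^{-2s}$ is estimated by Hölder's inequality with exponents $(q/2,\,q/(q-2))$, bounding it by $\|c\|_{\ell^q}^2$ times $\bigl(\sum_m \langle m\rangle^{-2sq/(q-2)}\bigr)^{(q-2)/q}$; this auxiliary sum is finite precisely when $s > 1/2 - 1/q$, which is automatic in the regime $s\ge 1/2$. Putting everything together, $\|K_s\ast a^2\|_{\ell^{q/2}} \lesssim \kappa_0^{2s+2/q}\|c\|_{\ell^q}^2 \le \kappa_0^2\|c\|_{\ell^q}^2$, the last inequality being the hypothesis $s+1/q \le 1$ once more. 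Summing the two pieces establishes $\|f\|_{Z_{\kappa_0,q}^s} \lesssim \kappa_0\|f\|_{M^s_{2,q}}$.
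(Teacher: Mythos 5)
Your proof is correct, and its architecture coincides with the paper's: both reduce the inner $\xi$-integral to a discrete convolution, split $\langle n\rangle^{2s}\lesssim\langle m\rangle^{2s}+\langle n-m\rangle^{2s}$, dispatch the easy piece by Young's inequality $\ell^1*\ell^{q/2}\to\ell^{q/2}$, and control the harder piece by Young's and H\"older's inequalities together, with the hypothesis $s<1-\frac1q$ entering through summability of the shifted weight. Two organizational differences are worth noting. The paper disposes of $\kappa_0$ at the very first step by sandwiching the kernel $\kappa_0^2/(4\kappa_0^2+\xi^2)$ between $c\,\langle\xi\rangle^{-2}$ and $\kappa_0^2\langle\xi\rangle^{-2}$ (valid since $\kappa_0\ge1$) and then working with the $\kappa_0$-free weight $\langle\xi-n\rangle^{-1}$ throughout; you instead carry the genuine kernel $K$ and the weighted kernel $K_s$ through the convolution estimates and compute $\|K\|_{\ell^1}$, $\|K_s\|_{\ell^1}$, and $\|K_s\|_{\ell^{q/2}}$ explicitly via a dyadic split at $|k|\sim\kappa_0$, which costs some extra computation but makes visible exactly which power of $\kappa_0$ emerges at each stage. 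For the harder convolution piece, the paper writes a single Young-plus-H\"older bound of the form $\|\langle n\rangle^{2s-2}\|_{\ell^r}\,\|\langle n\rangle^{-2s}\|_{\ell^{r'}}\,\|f\|^2_{M^s_{2,q}}$ with a free conjugate pair $(r,r')$ chosen afterwards as a function of $s$; your case split at $s=\frac12$ is the same parameter choice made explicit, the $\ell^1*\ell^{q/2}$ branch corresponding to the paper's $r=1$ and the $\ell^{q/2}*\ell^1$ branch (together with the auxiliary H\"older estimate for $\|a\|_{\ell^2}^2$) corresponding to $r=q/2$.
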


\begin{proof}By a simple calculation, we know that
$$\big\| \left \langle n \right \rangle ^{s}\|\left \langle \xi -n \right \rangle^{-1 }   \hat{f}  \|_{L_\xi ^2} \big\|_{\ell ^q_n}\lesssim\| f \|_{Z_{\kappa_0,q}^{s}}\lesssim \kappa_0 \big\| \left \langle n \right \rangle ^{s}\|\left \langle \xi -n \right \rangle^{-1}   \hat{f}  \|_{L_\xi ^2} \big\|_{\ell ^q_n}.$$
Since $\psi \in \mathcal{S}(\mathbb{R})$, it follows that $\psi(\xi -n)\lesssim \left \langle {\xi - n}\right \rangle^{-1}$, which yields that
$$\|f\|_{M^{s}_{2, q} }\lesssim \big\| \left \langle n \right \rangle ^{s}\|\left \langle \xi -n \right \rangle^{-1 }   \hat{f}  \|_{L_\xi ^2} \big\|_{\ell ^q_n}\lesssim \| f \|_{Z_{\kappa_0,q}^{s}}.$$

In the opposite direction, it follows from $s\geq0$ that $\langle n \rangle ^{2s}\lesssim \langle k-n \rangle ^{2s}+\langle k \rangle ^{2s}$,
	  \begin{align*}
&\big\| \left \langle n \right \rangle ^{s}\|\left \langle \xi -n \right \rangle^{-1}   \hat{f}  \|_{L_\xi ^2} \big\|^2_{\ell ^q_n}\\
\lesssim &\left\| \langle n \rangle ^{2s} \sum_{k\in \mathbb{Z}}\langle k - n\rangle^{-2} \|\hat{f}\|^2_{L_\xi ^2(I_{k})}\right\|_{\ell ^{\frac q2}_n}\\
\lesssim &\left\|\sum_{k\in \mathbb{Z}}\langle k - n\rangle^{2s-2} \|\hat{f}\|^2_{L_\xi ^2(I_{k})}\right\|_{\ell ^{\frac q2}_n}+\left\|\sum_{k\in \mathbb{Z}}\langle k - n\rangle^{-2} \langle k \rangle ^{2s}\|\hat{f}\|^2_{L_\xi ^2(I_{k})}\right\|_{\ell ^{\frac q2}_n}:={I_1}+{I_2}.
	  \end{align*}
By applying Young's inequality, we obtain that
$${I_2}\lesssim \bigg(\sum_{n\in \mathbb{Z}}\langle n \rangle^{-2}\bigg)\cdot
\big\|\langle n \rangle ^{s}\|\hat{f}\|_{L_\xi ^2(I_{n})}\big\|^2_{\ell ^{q}_n}\lesssim \|f\|^2_{M^{s}_{2, q} }.$$
When $s=0$, $I_1$ is the same as $I_2$.  When $s>0$, from Young's and H\"older's  inequalities, $1+\frac{2}{q} =\frac{1}{r}+\big(\frac{1}{r'}+\frac{2}{q}\big)$, it turns out that
\begin{align}\label{YH}
{I_1}\lesssim\|\langle n \rangle ^{2s-2} \|_{\ell _{n}^{r}}\cdot\|\langle n \rangle ^{-2s} \|_{\ell _{n}^{r'}}\cdot\big\|\langle n \rangle ^{s}\|\hat{f}\|_{L_\xi ^2(I_{n})}\big\|^2_{\ell ^{q}_n}\lesssim \|f\|^2_{M^{s}_{2, q} }
\end{align}
on the conditions that $2-2s>\frac1{r}$,$2s>\frac{1}{r'}$, and $\frac{1}{r'}+\frac{2}{q}\leq1$. In order to obtain the upper bound of $s$, we choose $r=\frac{q}{2}$, and obtain that the conditions are satisfied for $\frac{1}{2}-\frac{1}{q}<s<1-\frac{1}{q}$. For $0<s\leq \frac{1}{2}-\frac{1}{q}$, the conditions are satisfied as long as $r$ is chosen to satisfy $\frac{1}{r'}<2s$ (indeed we can choose $r=1$). Therefore, we conclude that the estimate \eqref{YH} holds for $s<1-\frac{1}{q}$. Thus the required conclusion follows.
\end{proof}

\noindent{\bf Proof of Theorem \ref{theorem2}.} It is well known that $M^{s}_{2, 2}=B^{s}_{2, 2}=H^s$, so the results in \eqref{Result} for the case $q=2$ and $-\frac12<s<1$ can be obtained in the same way as for the NLS and 5th-mKdV equations. We refer the readers to \cite{Chen23,KVZ18} for details and omit the proof here.

Next we consider the case $2< q< \infty $ and $0\leq s<1-\frac{1}{q}$.  In the conserved perturbation determinant $\alpha (\kappa;u(t))$, we choose $\kappa=\kappa_n:=\kappa_0+i\frac{n}{2}$ $(\kappa_0>1)$, then  from Lemma \ref{le1} and
$$\left \|( \kappa -\partial _{x} )^{-\frac{1}{2} }u ( \kappa +\partial _{x} )^{-\frac{1}{2}} \right \|_{\mathfrak{I}_{2}( \mathbb{R})   }=\left \|( \kappa +\partial _{x} )^{-\frac{1}{2} }\overline{u} ( \kappa -\partial _{x} )^{-\frac{1}{2}} \right \|_{\mathfrak{I}_{2}( \mathbb{R})},$$
we have for any $\delta>0$ that
\begin{align}
	  \bigg|\alpha(\kappa;u(t))-\int_{\mathbb{R}}\frac{2\kappa_0| \widehat{u}(\xi+n,t)|^{2} }{4\kappa_0^2+\xi ^{2}  }d\xi  \bigg|
 \leq&\sum_{l=2}^{\infty}\frac1l\left \|( \kappa -\partial _{x} )^{-\frac{1}{2} }u ( \kappa +\partial _{x} )^{-\frac{1}{2}} \right \|_{\mathfrak{I}_{2}}^{2l}\\ \nonumber
	 \lesssim&\sum_{l=2}^{\infty}\bigg( \int_{\mathbb{R} } \frac{| \widehat{u}( \xi,t )|^{2}  }{\big( 4\kappa_0^2+(\xi-n)^{2}\big)^{\frac{1}{2} -\delta }  }d\xi \bigg) ^{l}.
	  \end{align}
In order to ensure the convergence of the infinite sum above, we make the following estimate from Young's inequality that
\begin{align}
\int_{\mathbb{R} } \frac{| \widehat{u}( \xi,t )|^{2}  }{\big( 4\kappa_0^2+(\xi-n)^{2}\big)^{\frac{1}{2} -\delta }  }d\xi
&\lesssim \kappa_0^{-2\delta} \sum_{k \in \Z}
 \frac{1}{\langle k-n\rangle^{1-4\delta}}
\| \hat{u}(t, \xi)\|_{L^2_\xi(I_k)}^2\\ \nonumber
&\lesssim \kappa_0^{-2\delta} \|\langle k \rangle ^{4\delta-1} \|_{\ell _{k}^{\frac{q}{q-2}}}\|u\|^2_{M^{s}_{2, q} }\lesssim \kappa_0^{-2\delta} \|u\|^2_{M^{s}_{2, q} }
\end{align}
provided that $\delta=\delta(q)>0$ is sufficiently small(here we need $\delta<1/2q$). Therefore, we can choose
\begin{align}\label{Kappa0}
\kappa_0 \geq \max\{1, C \| u(0) \|^{\frac{1}{\delta}}_{M^{s}_{2, q}}\}
\end{align}
with some large absolute constant $C$, such that the series of $\alpha(\kappa;u(t))$ converges and Proposition  \ref{conser0} holds uniformly for any $t\in I$, where $I$ is a small closed interval containing $0$. Therefore, for any $t\in I$
\begin{align}
	  \bigg|\alpha(\kappa;u(t))-\int_{\mathbb{R}}\frac{2\kappa_0| \widehat{u}(\xi+n,t)|^{2} }{4\kappa_0^2+\xi ^{2}  }d\xi  \bigg|
\lesssim \kappa_0^{-4\delta} \bigg(\sum_{k \in \Z}
 \frac{1}{\langle k-n\rangle^{1-4\delta}}
\| \hat{u}(t, \xi)\|_{L^2_\xi(I_k)}^2 \bigg) ^{2}.
	  \end{align}
Multiply by $\langle n  \rangle ^{2s}$ and compute the $\ell^{\frac q2}_n$-norm, we have
\begin{align*}
	  &\bigg\| \langle n  \rangle ^{2s} \alpha(\kappa;u(t))-\langle n \rangle ^{2s}\int_{\mathbb{R}}\frac{2\kappa_0| \widehat{u}(\xi+n,t)|^{2} }{4\kappa_0^2+\xi ^{2}  }d\xi  \bigg\|_{\ell^\frac{q}{2}_n} \nonumber\\
 \lesssim& \kappa_0^{-4\delta}\bigg\| \sum_{k \in \Z}
 \frac{\langle n  \rangle ^{s}}{\langle k-n\rangle^{1-4\delta}}
\| \hat{u}(t, \xi)\|_{L^2_\xi(I_k)}^2 \bigg\|_{\ell^q_n}^2 \nonumber\\
 \lesssim& \kappa_0^{-4\delta}\bigg\| \sum_{k \in \Z}
 \frac{1}{\langle k-n\rangle^{1-4\delta-s}}
\| \hat{u}(t, \xi)\|_{L^2_\xi(I_k)}^2 \bigg\|_{\ell^q_n}^2 + \kappa_0^{-4\delta}\bigg\| \sum_{k \in \Z}
 \frac{\langle k  \rangle ^{s}}{\langle k-n\rangle^{1-4\delta}}
\| \hat{u}(t, \xi)\|_{L^2_\xi(I_k)}^2 \bigg\|_{\ell^q_n}^2.
	  \end{align*}
By choosing  $\delta=\delta(q)>0$ sufficiently small (indeed $\delta<1/4q$) and applying Young's inequality, we can get the second term above is controlled by $\kappa_0^{-4\delta} \|u\|^4_{M^{s}_{2, q} }$. For the first term, when $s=0$ it is the same as the second term. When $s>0$, proceeding as in Lemma \ref{equi}, from Young's and H\"older's  inequalities, $1+\frac{1}{q} =\frac{1}{l}+\big(\frac{1}{r}+\frac{2}{q}\big)$, we have
\begin{align}\label{YH2}
&\bigg\| \sum_{k \in \Z}
 \frac{1}{\langle k-n\rangle^{1-4\delta-s}}
\| \hat{u}(t, \xi)\|_{L^2_\xi(I_k)}^2 \bigg\|_{\ell^q_n}\nonumber\\
\lesssim&\|\langle n \rangle ^{s-1+4\delta} \|_{\ell _{n}^{l}}\cdot\|\langle n \rangle ^{-2s} \|_{\ell _{n}^{r}}\cdot\big\|\langle n \rangle ^{s}\|\hat{u}\|_{L_\xi ^2(I_{n})}\big\|^2_{\ell ^{q}_n}\lesssim \|u\|^2_{M^{s}_{2, q} }
\end{align}
on the conditions that $1-4\delta-s>\frac1{l}$, $2s>\frac{1}{r}$, and $\frac{1}{r}+\frac{2}{q}\leq1$. When $s\leq \frac{1}{2}-\frac{1}{q}$, we can choose $\frac{1}{r}=2s-\epsilon$ (as long as $0<\epsilon<\frac1q-4\delta $) to satisfy the conditions.  When $\frac{1}{2}-\frac{1}{q}<s<1-\frac{1}{q}$, we can choose $l=q,\ \frac{1}{r}=1-\frac2q$ and $\delta>0$ sufficiently small (indeed $\delta<\frac{1-s}{4}-\frac1{4q}$) to satisfy the conditions. Therefore, we conclude that
\begin{align*}
	  \bigg\| \langle n  \rangle ^{2s} \alpha(\kappa_n;u(t))-\langle n \rangle ^{2s}\int_{\mathbb{R}}\frac{2\kappa_0| \widehat{u}(\xi+n,t)|^{2} }{4\kappa_0^2+\xi ^{2}  }d\xi  \bigg\|_{\ell^\frac{q}{2}_n}
 \lesssim \kappa_0^{-4\delta}\|u\|^4_{M^{s}_{2, q} }
	  \end{align*}
for any $t\in I$. Then from Lemma \ref{equi} and the conservation of $\alpha(\kappa;u(t))$, we have
\begin{align*}
\| u(t)\|^2_{Z_{\kappa_0,q}^{s}}&\lesssim
 \kappa_0^{2} \big(\|\alpha(\kappa_n;u(t))\|_{\l^\frac{q}{2}_n}+\kappa_0^{-4\delta}\|u(t)\|^4_{M^{s}_{2, q} }\big)\nonumber\\
 &\lesssim\kappa_0^{2} \big(\kappa_0^{-2}\| u(0)\|^2_{Z_{\kappa_0,q}^{s}}+\kappa_0^{-4\delta}\|u(0)\|^4_{M^{s}_{2, q} }+\kappa_0^{-4\delta}\|u(t)\|^4_{M^{s}_{2, q} }\big)\nonumber\\
 &\lesssim \|u(0)\|^2_{Z_{\kappa_0,q}^{s}} +\kappa_0^{2-4\delta}\|u(0)\|^4_{Z_{\kappa_0,q}^{s} }+\kappa_0^{2-4\delta}\|u(t)\|^4_{Z_{\kappa_0,q}^{s}},
\end{align*}
it follows that for any $t\in I$,
\begin{align*}
\kappa_0^{2-4\delta}\| u(t)\|^2_{Z_{\kappa_0,q}^{s}}\lesssim
\kappa_0^{2-4\delta}\|u(0)\|^2_{Z_{\kappa_0,q}^{s}} +(\kappa_0^{2-4\delta}\|u(0)\|^2_{Z_{\kappa_0,q}^{s}})^2 +(\kappa_0^{2-4\delta}\|u(t)\|^2_{Z_{\kappa_0,q}^{s}})^2,
\end{align*}
Since the power of $\kappa_0$ is positive, we can not obtain smallness by choosing $\kappa_0$ sufficiently large, so the smallness of $\|u(0)\|_{Z_{\kappa_0,q}^{s}}$ is needed. Thus, without loss of generality, we fix $\kappa_0=C$. Suppose that
\begin{equation}\label{small}
\|u(0)\|_{Z_{\kappa_0,q}^{s}}\lesssim \|u(0)\|_{M^{s}_{2, q}} \leq  \varepsilon \ll 1,
\end{equation}
by a standard  continuity argument, we can get
$$\| u(t)\|_{Z_{\kappa_0,q}^{s}}\lesssim
\|u(0)\|_{Z_{\kappa_0,q}^{s}}$$
for all $t\in \mathbb{R}$, which implies that
$$\|u(t)\|_{M^{s}_{2, q} }\lesssim\| u(t)\|_{Z_{\kappa_0,q}^{s}}\lesssim
\|u(0)\|_{Z_{\kappa_0,q}^{s}}\lesssim \|u(0)\|_{M^{s}_{2, q} }.$$
This completes the proof for initial data with small $M^{s}_{2, q}$ norm.  For the general case, we use the scaling symmetry. For any solution $u(x,t)$ to \eqref{4NLS} with initial data $u(x,0)$, $u_{\lambda } (x,t)=\lambda u(\lambda x,\lambda^{4}t)$ with initial data $u_{\lambda } (x,0)$ is also a solution to \eqref{4NLS}. From \eqref{Scaling1}, we can choose sufficiently small $\lambda>0$ such that 	\begin{align}
	\| u_{\lambda }(0) \|_{M^{s}_{2,q}}\le C\lambda^{\frac{1}{q} }\| u(0)\|_{M^{s}_{2,q}}\le \varepsilon \ll 1, \label{eee15}
	\end{align}
which implies that we can obtain
\begin{align}\| u_{\lambda}(x,t)\| _{M^{s}_{2,q}}\lesssim \| u_{\lambda}(0)\| _{M^{s}_{2,q}}\label{global-t}\end{align}
for all $t\in \mathbb{R}$ by the small initial data results above. Indeed, we may choose
\begin{align}\lambda\sim \| u(0)\|_{M^{2,q}_{s} } ^{-q}.\label{global-t2}\end{align}
Therefore, from \eqref{Scaling2}, and \eqref{eee15}-\eqref{global-t2}, we have
\begin{align}
	\| u(t)\|_{M^{s}_{2,q}}\lesssim\lambda^{-s-\frac{1}{2}} \| u_{\lambda }(\lambda ^{-4}t) \|_{M^{s}_{2,q}}\lesssim\lambda^{-s-\frac{1}{2}+\frac{1}{q} }\| u(0)\|_{M^{s}_{2,q}}\lesssim C_{s,q}(\| u(0)\|_{M^{s}_{2,q}}).
	\end{align}
The theorem is now proved. $\hfill\Box$
\\

{\bf Acknowledgments.}

The first author M.J. Chen is supported in part by the National Natural Science Foundation of China under Grant No. 12001236, and Natural Science Foundation of Guangdong Province  under Grant No. 2020A1515110494.


\begin{thebibliography}{99}

\bibitem{A09} F. Azzouzi, H. Triki, K. Mezghiche, A. E. Akrmi, Solitary wave solutions for high dispersive cubic-quintic nonlinear
 Schr\"odinger equation, Chaos Solitons Fractals, 39 (2009) 1304-1307.


\bibitem{Chen20} M.J. Chen and B.L. Guo, Local well and ill posedness for the modified KdV equations in subcritical modulation spaces, Commun. Math. Sci., 2020, 18(4): 909-946.

\bibitem{Chen23} M.J. Chen and M.J. Shan, A priori estimates for the fifth-order modified KdV equations in Besov spaces with low regularity, J. Appl. Anal. Comput., 2023, 13(4): 2315-2325.

\bibitem{chen2020random} M.J. Chen and S. Zhang, Random data Cauchy problem for the fourth order Schr{\"o}dinger equation with the second order derivative nonlinearities, Nonlinear Anal., 2020, 190: 111608.

\bibitem{CHT12} M. Christ, J. Holmer and D. Tataru,  Low regularity a priori bounds for the modified Korteweg-de Vries equation, Lib. Math., 2012, 1(1): 51-75.


\bibitem{Da1906} L.S. Da Rios, On the motion of an unbounded fluid with a vortex filament of any shape, Rend. Circ. Mat. Palermo, 22 (1906) 117-135.

 \bibitem{D99} M. Daniel, L. Kavitha, R. Amuda, Soliton spin excitations in an anisotropic Heisenberg ferromagnet with octupole-dipole interaction, Phys. Rev. B 59 (1999) 13774-13781.

\bibitem{D01} T.A. Davydova, Yu.A. Zaliznyak, Schr\"odinger ordinary solitons and chirped solitons: fourth-order dispersive effects and cubic-quintic nonlinearity, Physica D, 156 (2001) 260-282.

\bibitem{dinh2021probabilistic} Dinh and V. Duong, Probabilistic Cauchy theory for the mass-critical fourth-order nonlinear Schr{\"o}dinger equation, J. Math. Phys., 2021, 62(3).

\bibitem{Fei2} H.G. Feichtinger, Modulation spaces on locally compact Abelian group, Technical Report, University of Vienna, 1983.



\bibitem{Gu16} S.M. Guo, On the 1D cubic nonlinear Schr\"odinger equation in an almost critical space, J. Fourier Anal. Appl., 2017, 23(1): 91-124.

\bibitem{GRW21} S.M. Guo, X.F. Ren and B.X. Wang. Local well-posedness for the derivative nonlinear Schr\"odinger equation with $L^2$-subcritical data. Discrete Continuous Dyn. Syst., 2021, 41(9): 4207-4253.


\bibitem{HaHeKo09} M. Hadac, S. Herr and H. Koch, Well-posedness and scattering for the KP-II equation in a critical space. Ann. Inst. H. Poincar\'{e} Anal. Non Lin\'{e}aire, 2009, 27(3): 917-941.

\bibitem{hao2007well} C.C. Hao, L. Hsiao and B.X. Wang, Well-posedness of Cauchy problem for the fourth order nonlinear Schr{\"o}dinger equations in multi-dimensional spaces, J. Math. Anal. Appl, 2007, 328(1): 58-83.

\bibitem{huo2005cauchy} Z.H. Huo and Y.L. Jia, The Cauchy problem for the fourth-order nonlinear Schr{\"o}dinger equation related to the vortex filament, J. Differ. Equations, 2005, 214(1): 1-35.

\bibitem{huo2007refined} Z.H. Huo and Y.L. Jia, A refined well-posedness for the fourth-order nonlinear Schr{\"o}dinger equation related to the vortex filament, Commun. Part. Diff. Eq., 2007, 32(10): 1493-1510.

\bibitem{KaKoIt14} K. Kato, M. Kobayashi and S. Ito, Estimates on modulation spaces for Schr\"odinger evolution operators
with quadratic and sub-quadratic potentials. J. Funct. Anal.,  2014, 266(2), 733--753.

\bibitem{KPV91} C.E. Kenig, G. Ponce and L. Vega, Oscillatory integrals and regularity of dispersive equations. Indiana Univ. Math. J., 1991, 40: 33-69.




\bibitem{KVZ18} R. Killip, M. Vi\c{s}an and X. Zhang, Low regularity conservation laws for integrable PDE, Geom. Funct. Anal., 2018, 28(4): 1062-1090.


\bibitem{KoTa05} H. Koch and D. Tataru, Dispersive estimates for principlally normal pseudo-differential operators, Comm. Pure Appl. Math., 2005, 58(2): 217-284.

\bibitem{KoTa07} H. Koch and D. Tataru, A priori bounds for the 1D cubic NLS in negative Sobolev spaces, Int. Math. Res. Not., 2007, 2007(9): rnm053-rnm053.

\bibitem{KoTa12} H. Koch and D. Tataru, Energy and local energy bounds for the 1D cubic NLS equation in $H^{1/4}$, Ann. Inst. H. Poincar\'{e} Anal. Non Lin\'{e}aire, 2010, 29(6): 955-988.

\bibitem{kwak2018periodic} C. Kwak, Periodic fourth-order cubic NLS: Local well-posedness and Non-squeezing property, J. Math. Anal. Appl., 2018, 461(2): 1327-1364.

\bibitem{maeda2011existence} M. Maeda and J.I. Segata, Existence and stability of standing waves of fourth order nonlinear Schr{\"o}dinger type equation related to vortex filament, Funkc. Ekvacioj, 2011, 54(1): 1-14.

\bibitem{oh2020solving} T. Oh, N. Tzvetkov and Y.Z. Wang, Solving the 4NLS with white noise initial data, Forum Math. Sigma, 2020, 8: e48.

\bibitem{oh2018global} T. Oh and Y.Z. Wang, Global well-posedness of the periodic cubic fourth order NLS in negative Sobolev spaces, Forum Math. Sigma, 2018, 6: e5.

\bibitem{OhWang20} T. Oh and Y.Z. Wang, Global well-posedness of the one-dimensional cubic nonlinear Schr\"{o}dinger equation in almost critical spaces, J. Differ. Equations, 2020, 269(1): 612-640.


\bibitem{pausader2009cubic} B. Pausader, The cubic fourth-order Schr{\"o}dinger equation, J. Funct. Anal., 2009, 256(8): 2473-2517.

\bibitem{pausader2010mass} B. Pausader and S.L. Shao, The mass-critical fourth-order Schr{\"o}dinger equation in high dimensions, J. Hyperbol. Differ. Eq., 2010, 7(04): 651-705.

\bibitem{ruzhansky2016global} M. Ruzhansky, B.X. Wang and H. Zhang, Global well-posedness and scattering for the fourth order nonlinear Schr{\"o}dinger equations with small data in modulation and Sobolev spaces, J. Math. Pure. Appl., 2016, 105(1): 31-65.


\bibitem{segata2003well} J.I. Segata, Well-posedness for the fourth-order nonlinear Schr{\"o}dinger-type equation related to the vortex filament, Differ. Integral Equ., 2003, 16(7): 841-864.

\bibitem{segata2004remark} J.I. Segata, Remark on well-posedness for the fourth order nonlinear Schr{\"o}dinger type equation, Proc. Amer. Math. Soc., 2004, 132(12): 3559-3568.

\bibitem{segata2010well} J.I. Segata, Well-posedness and existence of standing waves for the fourth order nonlinear Schr{\"o}dinger type equation, Discrete Continuous Dyn. Syst., 2010, 27(3): 1093-1105.

\bibitem{segata2012refined} J.I. Segata, Refined energy inequality with application to well-posedness for the fourth order nonlinear Schr{\"o}dinger type equation on torus, J. Differ. Equations, 2012, 252(11): 5994-6011.

\bibitem{Wa13} B.X. Wang,  Globally well and ill posedness for non-elliptic derivative Schr\"odinger equations with small rough data, J. Funct. Anal.,  2013, 265, 3009-3052.

\bibitem{seong2021well} K. Seong, Well-posedness and ill-posedness for the fourth order cubic nonlinear Schr{\"o}dinger equation in negative Sobolev spaces, J. Math. Anal. Appl., 2021, 504(1): 125342.

\bibitem{WaHu07} B.X. Wang and C.Y. Huang, Frequency-uniform decomposition method for the generalized BO, KdV and NLS equations, J. Differ. Equations, 2007, 239(1): 213-250.

\bibitem{WaHaHuGu11} B.X. Wang, Z.H. Huo, C.C. Hao and Z.H. Guo, Harmonic Analysis Method for Nonlinear Evolution Equations, I. World Scientific Publishing Co., Pte. Ltd., Hackensack, NJ 2011.

\bibitem{Zhang09} H. Q. Zhang, B. Tian, X. H. Meng, X. L\"u, W. J. Liu, Conservation laws, soliton solutions and modulational instability for the higher-order dispersive nonlinear Schr\"odinger equation, Eur. Phys. J. B 72 (2009) 233-239.
\end{thebibliography}
\end{document}